\documentclass{amsart}
\usepackage{xcolor}
\usepackage{amssymb,latexsym,amsmath}
\usepackage{graphicx,mathrsfs}

\usepackage[colorlinks,linkcolor=blue,anchorcolor=blue,citecolor=blue]{hyperref}

\makeatletter
\@namedef{subjclassname@2020}{%
  \textup{2020} Mathematics Subject Classification}
\makeatother

\numberwithin{equation}{section}
\numberwithin{figure}{section}

\newtheorem{theorem}{Theorem}[section]
\newtheorem{lemma}[theorem]{Lemma}
\newtheorem{proposition}[theorem]{Proposition}
\newtheorem{remark}[theorem]{Remark}

\newtheorem{definition}[theorem]{Definition}
\newtheorem{corollary}[theorem]{Corollary}

\newtheorem{question}[theorem]{Question}

\newcommand{\e}{\varepsilon}

\newcommand{\wh}{\widehat}

\newcommand{\Id}{{\bf 1}}

\allowdisplaybreaks[4]

\begin{document}

\title[]{Boundedness of Multilinear Hilbert Transforms Along Moment Curves}

\author{Jingwei Guo}
\address{
School of Mathematical Sciences\\
University of Science and Technology of China\\
Hefei, 230026\\ P.R. China}
\email{jwguo@ustc.edu.cn}
    
\author{Xiaochun Li}
\address{
Department of Mathematics\\
University of Illinois at Urbana-Champaign\\
Urbana, IL, 61801, USA}
\email{xcli@illinois.edu}

\author{Guoqing Zhan}
\address{
School of Mathematical Sciences\\
University of Science and Technology of China\\
Hefei, 230026\\ P.R. China}
\email{zhanguoqing@mail.ustc.edu.cn}

\date{}

\thanks{}

\subjclass[2020]{42B20, 42B25}

\keywords{Multilinear Hilbert transforms, multilinear paraproducts}

\begin{abstract}
We prove the boundedness of multilinear Hilbert transforms along moment curves by establishing uniform estimates for both translated and untranslated multilinear paraproducts and by applying a Sobolev smoothing inequality.   As a byproduct of our method, we obtain uniform bounds for both multilinear Hilbert transforms and multilinear maximal functions along moment curves.
\end{abstract}

\maketitle


\section{Introduction} \label{intro}

Let $n\geq 2$ be an integer and $\Gamma_n=(t^{m_1}, \dots, t^{m_n})$ be a moment curve, where 
the exponents $m_1,\dots, m_n\in \mathbb N$ are  distinct. Define the $n$-linear Hilbert transform associated with 
the moment curve $\Gamma_n$ by 
\begin{equation*}
H_{\Gamma_n}(f_1, \dots, f_n)(x) = \mathrm{p.v.} \int_{\mathbb R} f_1(x-t^{m_1})\cdots f_n(x-t^{m_n}) \,\frac{\mathrm{d}t}{t}\,,
\end{equation*}
for $x\in\mathbb R$  and Schwartz functions $f_1, \dots, f_n$.

\begin{theorem}\label{thm}
Let $n\geq 2$. 
The multilinear Hilbert transform $H_{\Gamma_n}$ satisfies 
\begin{equation*}
 \big\| H_{\Gamma_n}(f_1, \dots, f_n)\big\|_r\lesssim \|f_1\|_{p_1}\cdots\|f_n\|_{p_n}\,,
\end{equation*}
for all $p_1, \dots, p_n\in (1,\infty)$ and $r\in (1/2, \infty)$  whenever
\begin{equation}\label{homo-r}
 \frac{1}{r} =\frac{1}{p_1}+\dots + \frac{1}{p_n}\,.  
\end{equation}
\end{theorem}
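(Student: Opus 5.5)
We will follow the strategy indicated in the abstract: decompose $H_{\Gamma_n}$ into pieces, each reducible either to a (translated or untranslated) multilinear paraproduct, or to an error term that gains a power decay through a Sobolev smoothing inequality. After the substitutions $t\mapsto -t$ and relabeling we may assume $m_1<m_2<\dots<m_n$. Fix a smooth $\rho$ supported in $\{1/2\le |t|\le 2\}$ with $\sum_{j\in\mathbb Z}\rho(2^{-j}t)=1/t$ in the appropriate sense, and write
\begin{equation*}
H_{\Gamma_n}=\sum_{j\in\mathbb Z} T_j,\qquad T_j(f_1,\dots,f_n)(x)=\int f_1(x-t^{m_1})\cdots f_n(x-t^{m_n})\,\rho_j(t)\,\mathrm{d}t,
\end{equation*}
with $\rho_j(t)=2^{-j}\rho(2^{-j}t)$, so that $\int\rho_j=0$. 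Using a Littlewood--Paley decomposition $f_i=\sum_{k}P_{k}f_i$, in the piece $T_j$ each $f_i$ is effectively frequency-localized relative to the scale $2^{-m_ij}$, and we split the frequency interactions into three regimes: (a) all $f_i$ have frequencies $\lesssim 2^{-m_ij}$; (b) some $f_i$ has frequency $2^{-m_ij+\ell}$, $\ell>0$, and it dominates the others in the sense that the phase of the oscillatory integral in $t$ is nondegenerate; (c) the remaining mixed cases.

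In regime (a), Taylor expanding $f_i(x-t^{m_i})$ around $x$ (or rather expressing the low-frequency pieces via Fourier transform) and using $\int\rho_j=0$, the sum over $j$ becomes a finite combination of multilinear paraproducts of the form $\sum_j \prod_i (f_i*\phi^{(i)}_{j})$ with at least one $\phi$ of mean zero, or translated versions where the translation parameter $t^{m_i}$ is at most comparable to the scale. These are handled by the uniform estimates for translated and untranslated multilinear paraproducts promised in the abstract, which give bounds in the full range $p_i\in(1,\infty)$, $r>1/2$ with constants at most polynomial in the translation parameter. In regime (b), the key step is a Sobolev smoothing (local smoothing) inequality: for $T_j$ with one input at frequency $2^{-m_ij+\ell}$, we prove
\begin{equation*}
\|T_j(f_1,\dots,f_n)\|_{1}\lesssim 2^{-\delta\ell}\prod_i\|f_i\|_{p_i}
\end{equation*}
for some $\delta>0$ at a single exponent point (e.g.\ via a $TT^*$ / van der Corput argument at $L^2\times\cdots$, using that the curve has distinct monomials so the derivatives of the phase cannot all vanish), and combine it with trivial bounds (growing at most polynomially in $\ell$ via the maximal-function type estimate along moment curves) by interpolation to get decay $2^{-\delta'\ell}$ everywhere in the open range. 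Summing in $j$ requires a square-function argument: after freezing $\ell$, the sum over $j$ is again a paraproduct-type object whose bound is uniform up to a $\mathrm{poly}(\ell)$ loss, which the geometric decay absorbs.

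The main obstacle is regime (b)/(c): obtaining the smoothing decay $2^{-\delta\ell}$ in a multilinear setting where several inputs may simultaneously be at high frequency with cancelling phases, and doing so with exponents down to $r>1/2$ (quasi-Banach). I would first handle $r\ge1$ by duality and the $L^2$ decay estimate, then reach $r>1/2$ by proving the pieces satisfy restricted weak-type bounds with localization (the paraproduct estimates being valid for $r>1/2$) and interpolating multilinearly, keeping track that constants only grow polynomially in $\ell$.
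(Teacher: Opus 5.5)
Your overall architecture agrees with the paper's: dyadic splitting of $1/t$, Littlewood--Paley pieces relative to the scales $2^{m_ik}$, the all-low-frequency regime turned into untranslated paraproducts via Taylor expansion and $\int\rho=0$, and the high-frequency regimes handled by interpolating a decay estimate against translated-paraproduct bounds with a mild loss. The gap is that the decisive analytic input is missing.

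What is needed is the paper's Proposition~\ref{prop-0}:
$\|T_k(f_1,\dots,f_n)\|_r\lesssim\delta^{-c}\prod_j\|f_j\|_{p_j}$,
where just \emph{one} input is frequency-localized at $\delta\geq1$ times its natural scale and the other $n-1$ inputs are arbitrary. Because nothing is assumed about the other inputs, this single estimate supplies the decay for every configuration with two or more high-frequency inputs. That includes the ``cancelling phases'' you call the main obstacle. The paper does not derive it from oscillatory-integral estimates. It imports the multilinear Sobolev inequality of Kosz--Mirek--Peluse--Wan--Wright \cite[Theorem 6.1]{KMPWJ}, an inverse-theorem/degree-lowering result. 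It then transfers it to this setting by the scaling and projection argument of Becker--Krause \cite{BK26} plus integration by parts.

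A $TT^*$/van der Corput argument cannot replace this for $n\geq3$. With $f_2,\dots,f_n$ arbitrary there is no explicit phase to differentiate. Even for fully localized inputs, $L^\infty$ decay of the symbol $\int e^{-i\sum_i\eta_it^{m_i}}\rho(t)\,\mathrm{d}t$ does not yield $L^{p_1}\times\cdots\times L^{p_n}\to L^r$ bounds, and this symbol has stationary points as soon as two of the $\eta_i$ are large and comparable. So regime (c) has no valid argument as written.

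There is a second gap. To spread the decay over the open range by interpolation, you need uniform single-scale bounds for $T_j$ with $1/2<r<1$, and these are not trivial. Minkowski's inequality is unavailable for $r<1$. The multilinear maximal function along moment curves in the quasi-Banach range is not something you can quote; the paper obtains it only as a byproduct, Theorem~\ref{thm-al}. The paper proves the single-scale bound separately as Theorem~\ref{thm1/2}:
\begin{itemize}
\item Restricted-type interpolation reduces it to a bilinear $L^1\times L^{r/(1-r)}\to L^r$ estimate for $\int f_1(x-\beta t^{m_1})f_2(x-t^{m_2})\rho(t)\,\mathrm{d}t$ with $|\beta|\leq1$.
\item That estimate is proved by decomposing $t$ according to the size of the Jacobian of $(x,t)\mapsto(x-\beta t^{m_1},x-t^{m_2})$.
\end{itemize}
Without this your decay estimate only propagates into the Banach range, and $r>1/2$ is not reached.

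Two smaller points:
\begin{itemize}
\item When a single input is high-frequency, no smoothing is needed. The symbol is then non-stationary, and a rapidly decaying Fourier-series expansion reduces matters to untranslated paraproducts; this is the paper's $J=1$ case.
\item In the quasi-Banach range the translated-paraproduct bounds lose $2^{\epsilon\ell}$ for every $\epsilon>0$, not $\mathrm{poly}(\ell)$. This still suffices.
\end{itemize}
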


In Theorem \ref{thm}, the condition $r>1/2$ is known to be sharp when $n=2$. For $n\geq 3$, the threshold $r=1/2$ is essentially sharp in the following sense: in the nontrivial case where at least one of the exponents \(m_j\) is odd, for every $1/n<r<1/2$, there exist $p_1,\dots,p_n\in(1,\infty)$ satisfying \eqref{homo-r} such that $H_{\Gamma_n}$ is unbounded from $L^{p_1}\times\cdots\times L^{p_n}$ to $L^r$. The corresponding counterexamples for $n\geq 3$ are provided in Appendix \ref{appendixB}.  For $n=2$, Theorem \ref{thm} was proved in \cite{DD}, while the case $n=3$ was established in \cite{HL25}. Our method not only recovers these earlier results but also differs fundamentally from the previous approaches, allowing it to extend to the general multilinear setting.

The proof relies on the following Sobolev smoothing inequality, which is obtained by combining the high-dimensional Sobolev estimate \cite[Theorem 6.1]{KMPWJ} of Kosz, Mirek, Peluse, Wan, and Wright  with a scaling and projection argument from Becker and Krause \cite{BK26}, followed by integration by parts.

\begin{proposition}\label{prop-0}
Let $\rho$ be a smooth function supported on $[-2, 2]$, and
\begin{equation}\label{defT_k}
T_k(f_1, \dots, f_n)(x)=\int f_1(x-t^{m_1}) \cdots f_n(x-t^{m_n}) 2^{k}\rho(2^{k}t)\,\mathrm{d}t\,,
\end{equation}
where $m_1, \dots, m_n$ are distinct positive integers and $k$ is an arbitrary integer. 
Suppose that, for some $j\in\{1,\dots,n\}$, the Fourier transform
$\widehat{f_j}$ is supported in the dyadic annulus 
$\{\xi\in\mathbb{R} : 2^{m_j k}\delta\leq |\xi|\leq 2^{m_j k+1}\delta\}$ with $\delta\geq 1$.  Then 
for any exponents $p_1, \dots, p_n, r\in [1, \infty)$ satisfying the H\"older relation \eqref{homo-r}, there exists $c=c(n,p_{1},\ldots,p_{n},m_{1},\ldots,m_{n})>0$ such that
\begin{equation*}
\big\| T_k(f_1, \dots, f_n)\big\|_r\lesssim \delta^{-c} \prod_{j=1}^n \|f_j\|_{p_j}\,. 
\end{equation*}
\end{proposition}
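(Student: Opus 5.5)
By scaling invariance I will first reduce to $k=0$. Setting $f_i^{(k)}(y)=f_i(2^{-m_i k}y)$ is not quite compatible, because different $m_i$ give different dilations. Instead I substitute $t=2^{-k}s$, which gives
\[
T_k(f_1,\dots,f_n)(x)=\int \prod_i f_i(x-2^{-m_ik}s^{m_i})\rho(s)\,ds .
\]
This is a curve with coefficients $a_i=2^{-m_ik}$, so the normalized problem is a trilinear-type average along $(a_1s^{m_1},\dots,a_ns^{m_n})$ with $\widehat{f_j}$ supported where $|\xi|\sim a_j^{-1}\delta$. The key point is that the estimate must be uniform in the coefficients $a_i$. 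This is exactly the setting of the projection/scaling argument of Becker--Krause \cite{BK26}. One lifts to $\mathbb R^n$ via $F(y_1,\dots,y_n)=\prod_i f_i(y_i)$ (or, more generally, tensor-type functions), considers the single-scale average
\[
A F(y)=\int F\bigl(y-(a_1s^{m_1},\dots,a_ns^{m_n})\bigr)\rho(s)\,ds
\]
on $\mathbb R^n$, and recovers $T_k$ by restricting to the diagonal $y_1=\dots=y_n=x$. Since the diagonal restriction is not bounded on $L^p$, I will average over translates along the diagonal: write $x=y_i+z_i$ and integrate over a unit box, using that each $f_i$ is frequency localized. For $i\neq j$ one first inserts harmless frequency cutoffs or uses local constancy at scale $a_i$ (after anisotropic rescaling $y_i\mapsto a_iy_i$ all scales become $1$). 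This transfers bounds for $A$ on $L^{p_1}\times\cdots$ mixed norms to bounds for $T_k$.

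Next I invoke the high-dimensional Sobolev estimate \cite[Theorem 6.1]{KMPWJ}. After the anisotropic rescaling the curve becomes the standard moment curve $(s^{m_1},\dots,s^{m_n})$ with $s\in\operatorname{supp}\rho$. That theorem gives, for the multilinear form with one function frequency-supported at $|\xi_j|\sim\delta$, a power gain $\delta^{-c}$ in an $L^1$ or $L^2$-type norm (the single-scale Sobolev smoothing inequality). Concretely, I expect a bound of the shape $\|T_0(f_1,\dots,f_n)\|_{1}\lesssim\delta^{-c}\prod\|f_i\|_{p_i}$ at one H\"older tuple, plus the trivial bound $\prod\|f_i\|_{p_i}$ for every tuple. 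The trivial bound follows from H\"older and Minkowski: $|T_0|\le\int\prod|f_i(x-s^{m_i})|\,|\rho(s)|\,ds$, and each $f_i(\cdot-s^{m_i})$ is a translate, so the $L^r$ norm is at most $\|\rho\|_1\prod\|f_i\|_{p_i}$ when $r\ge1$. The integration by parts mentioned in the text is where the $s$-integral is handled. Near $s=0$ the curve degenerates, so I split $\rho=\rho\chi_{|s|<\delta^{-\epsilon}}+\rho\chi_{|s|\ge\delta^{-\epsilon}}$. The small piece contributes $O(\delta^{-\epsilon})$ by the trivial bound. On the remaining region I rescale $s$ dyadically; this changes $\delta$ to $\delta 2^{-m_j\ell}\ge\delta^{1-m_j\epsilon}$, so \cite{KMPWJ} still gives a gain.

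Finally, multilinear interpolation converts the gain at one exponent tuple together with the trivial bounds into a gain $\delta^{-c'}$ for all $p_i,r\in[1,\infty)$ satisfying \eqref{homo-r}. Since the frequency restriction on $f_j$ is preserved by a bounded Littlewood--Paley projection, the operator $(f_1,\dots,f_n)\mapsto T_k(f_1,\dots,P_jf_j,\dots,f_n)$ is a legitimate multilinear operator to interpolate, and the interpolation can be carried out by complex or real multilinear methods.

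I expect the main obstacle to be the transfer step from the $\mathbb R^n$ estimate of \cite{KMPWJ}, which is formulated for functions on $\mathbb R^n$ and a specific norm, to products of one-variable functions evaluated on the diagonal, uniformly in the coefficients $a_i$. The plan is to follow \cite{BK26}: average over a box of diagonal translates of side comparable to the scales, and absorb the resulting errors using the frequency localization of $f_j$ and the smoothness of $\rho$.
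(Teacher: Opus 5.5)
Your outer skeleton agrees with the paper's sketch: pass to unit scale with coefficients $a_i=2^{-m_ik}$, obtain a power gain at one H\"older tuple from \cite[Theorem 6.1]{KMPWJ}, and interpolate against the Minkowski--H\"older bound. The steps around it are sound. Rescaling the pieces $|s|\sim 2^{-\ell}$ does replace $\delta$ by $\delta2^{-m_j\ell}$, and interpolation reaches every admissible tuple since all $1/p_i>0$.

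The gap is in the transfer step, which carries all the content. You lift by the tensor product $F=\bigotimes_i f_i$ into the \emph{linear} average $A$ and then restrict $AF$ to the diagonal. No (mixed-)norm bound on $AF$ controls that trace in $L^r$, $1/r=\sum_i 1/p_i$, even with perfect local constancy. On $\mathbb R^2$, take $G(y)=\phi(y_1-y_2)h(y_1/L)$ with $\phi,h$ band-limited bumps. It is locally constant at unit scale and has mixed norms $O(L^{\max\{1/p_1,1/p_2\}})$, yet its diagonal trace has $L^r$ norm $\sim L^{1/r}$. H\"older controls traces of tensor products only, and $AF$ is not one. On top of that, the local constancy you invoke is not available. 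Only $f_j$ is band-limited, and the high-frequency parts of $f_i$, $i\neq j$, are not small. Cutting them off is not harmless, because controlling them would require a gain coming from $f_i$ itself.

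The transfer that does work goes the other way: pull the one-variable inputs back to $\mathbb R^n$ instead of restricting to a line. Set $\pi_a(y)=\sum_i a_iy_i$ and $F_i(y)=f_i(\pi_a(y))\mathbf 1_{[-R,R]^{n-1}}(y')$ with $y'=(y_l)_{l\neq j}$. Feed these into a \emph{multilinear} form $\mathcal A(F_1,\dots,F_n)(y)=\int\prod_iF_i(y-s^{m_i}e_i)\rho(s)\,ds$, in which the $i$-th input moves only along $e_i$. Then:
\begin{itemize}
\item Outside an $O(1)$-neighbourhood of the boundary of the box, $\mathcal A(F_1,\dots,F_n)(y)=T_k(f_1,\dots,f_n)(\pi_a(y))$ exactly.
\item The Jacobian $a_j^{-1}$ and the factors $(2R)^{(n-1)/p_i}$ cancel against those in the output norm by the H\"older relation as $R\to\infty$.
\item For fixed $y'$, the function $y_j\mapsto F_j(y)$ is $f_j(a_jy_j+\mathrm{const})$, whose frequencies satisfy $|\eta_j|\in[\delta,2\delta]$.
\end{itemize}
Nothing is assumed about $f_i$, $i\neq j$. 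Since the $a_i$ enter only through $\pi_a$, the bound is automatically uniform in $k$.

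This pullback is the kind of projection that lets scaling and a high-dimensional estimate be combined as the paper indicates. Accordingly, what you need from \cite[Theorem 6.1]{KMPWJ} is a multilinear estimate for a form of this type, not a bound for the linear average along the curve in $\mathbb R^n$.
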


Using Proposition \ref{prop-0}, we reduce the problem to obtaining uniform estimates for multilinear paraproducts, as demonstrated in Section \ref{To-Para}. We now describe the multilinear paraproducts that arise in our study.  For $j\in\{1, \dots, n\}$, let $\delta_j$ be a (positive) dyadic number.  
Let $\Phi:\mathbb R\rightarrow \mathbb C$ be a Schwartz function whose Fourier transform is supported in $\{\xi\in\mathbb R: 
1/2<|\xi|<2\}$. We define $\Phi_{j, k, \delta}$ by  
\begin{equation}\label{s1-2}
 \wh{\Phi_{j, k, \delta}}(\xi) =\wh\Phi\big( \frac{\xi}{2^{m_j k}\delta} \big)\,.
\end{equation}
Let $\Psi:\mathbb R\rightarrow\mathbb C$ be a Schwartz function whose Fourier transform is supported in $\{\xi\in\mathbb R: 
|\xi|<2\}$. Similarly, we 
define $\Psi_{j, k, \delta}$ by  
\begin{equation*}
 \wh{\Psi_{j, k, \delta}}(\xi) =\wh\Psi\big( \frac{\xi}{2^{m_j k}\delta} \big)\,.
\end{equation*}
For any locally integrable function $f_j:\mathbb R\rightarrow \mathbb C$, we set
\begin{equation*}
 f_{j, k, \delta_j, \Psi} (x) = f_j* \Psi_{j, k, \delta_j}(x)\,.
 \end{equation*}
 Let $n_j$ be a positive integer and $t$ be a real number. We define 
 \begin{equation*}
  f_{j, k, \delta_j, n_j, t}(x) = f_j*\Phi_{j, k, \delta_j} \big(x -2^{n_j} 2^{-m_jk}\delta_j^{-1} t^{m_{j}} \big)\,.
 \end{equation*}
The paraproduct that we encounter is given as follows: for any $J\in \{0, 1, \dots, n\}$, 
\begin{equation}\label{def-Pi}
 \Pi_{\delta_1, \dots, \delta_J} (f_1, \dots\!, f_n)(x) = \sum_{k\in\mathbb K} \int\!\! \rho^*(t) \!\!\!\prod_{1\leq j\leq J} \!\!f_{j, k, \delta_j, n_j, t}(x)  \,\mathrm{d}t \!\!\!\! \prod_{J+1\leq j\leq n}\!\!\!\!
 f_{j, k, \delta_j, \Psi}(x)\,,
\end{equation}
where  $\mathbb K$ is a subset of $\mathbb Z$ and the function $\rho^*$ is a Schwartz function on $\mathbb R$ supported in $\{t\in\mathbb R: 1/2<|t|<2\}$. For $J\in\{2, \dots, n\}$, we call $\Pi_{\delta_1, \dots, \delta_J}$ a $J$-{\it translated paraproduct}, since  the first 
$J$ factors involve the $t$-dependent spatial translations 
$$
x -2^{n_j} 2^{-m_jk}\delta_j^{-1}t^{m_{j}} \,.
$$
Note that if $J=0$, the tuple $(\delta_1,\dots,\delta_J)$ is empty.  In this degenerate case, we denote by $\Pi_0$ the {\it untranslated paraproduct}
$$
\Pi_0=\sum_{k\in\mathbb K}  \prod_{1\leq j\leq n}
 f_{j, k, \delta_j, \Psi}(x)\,.$$
To use uniform notation for all possible values of $J$, we set
$$
  \Pi_{\delta_1, \dots, \delta_J} = \Pi_0\,\,\, {\rm if}\,\,J=0\,.
 $$
Moreover, notice that the paraproduct $\Pi_{\delta_1,\dots,\delta_J}$ also depends on $\delta_{J+1},\dots,\delta_n$ and $m_1,\dots,m_n$. Here, we suppress these dependencies, as $J$ is the most important parameter for the $J$-translated paraproducts. For applications, we need to establish uniform estimates that are independent of $\delta_1,\dots,\delta_n$, but may depend on $m_1,\dots,m_n$.
We remark that, in our paper, we do not need to consider the case $J=1$, since it can be reduced to the case $J=0$, as explained in Section \ref{To-Para}.\\

The study of paraproducts plays a fundamental role in harmonic analysis and partial differential equations (PDEs), and a substantial body of literature has been devoted to their boundedness and mapping properties in various function spaces. In particular, paraproducts provide an important framework for understanding nonlinear interactions between functions at different frequency scales and have become a central tool  in the analysis of nonlinear PDEs.

A number of important results have been established for untranslated paraproducts. For instance, in the case $m_1=\cdots=m_n$, Muscalu, Tao, and  Thiele established uniform $L^r$ estimates, for $r>1$, for untranslated paraproducts in \cite{MTT02-2}. Their approach relies on techniques arising from graph theory and provides a systematic framework for controlling the multilinear interactions associated with these operators. These results constitute an important part of the theory of multilinear paraproducts in the Banach range.

The quasi-Banach range $r<1$, however, presents substantially different difficulties, since $L^r$ is no longer a Banach space and the usual tools based on the triangle inequality are no longer directly available. In this direction, Grafakos, He, Kalton, and  Mastyło studied $L^r$ estimates for regular untranslated paraproducts with $r<1$ in \cite{GHKM}. Their analysis makes use of Hardy space $H^p$ theory and develops techniques adapted to the quasi-Banach setting. These results demonstrate that, although the range $r<1$ requires different methods, appropriate Hardy space and maximal function techniques can still yield strong boundedness results for paraproducts.

The paraproducts considered in the present paper are substantially more challenging, as we study both translated and untranslated paraproducts that are different from those considered in the aforementioned works.
 The bilinear theory of such translated paraproducts, as well as that of the bilinear Hilbert transform along the moment curve, was first studied by the second author in \cite{Li1,Li2}. In the present paper, we develop the corresponding multilinear $L^r$ theory, which forms a central part of our approach and constitutes one of the main contributions of this work.

More precisely, we introduce a new algorithm that yields the full range $r>1/n$ for the 
(irregular) untranslated paraproducts, as defined later in the paper, as well as for the $2$-translated paraproducts.  For the $J$-translated paraproducts with $J\geq 3$, the Banach range can be treated using a new translated Littlewood--Paley theorem established in Section \ref{VLP}. In the quasi-Banach setting, however, our method only yields the range $r>1/2$ for all $3\leq J\leq n$. Thus, while the Banach range can be handled in full generality, the quasi-Banach range remains more delicate. \\

The use of the Sobolev smoothing inequality provides a new perspective on these estimates. In particular, it allows us to exploit the smoothing effect at the level of the multilinear paraproducts and leads to a proof that is fundamentally different from those in the previous works \cite{DD, HL, HL25, Li1, Li2, Li-Xiao}. While these earlier works focused only on the bilinear or trilinear setting, our approach extends the analysis to the multilinear setting.  Thus, the combination of the smoothing inequality with the multilinear paraproduct analysis is a key ingredient in our proof and provides a new framework for establishing the estimates needed in this paper.\\

Our method can also be used to obtain uniform estimates for both the multilinear Hilbert transform and the multilinear maximal function. More precisely, let $\alpha_1,\dots,\alpha_n$ be nonzero real numbers. For distinct exponents $m_1,\dots, m_n\in \mathbb N$, define the $n$-linear Hilbert transform associated with the moment curve $\Gamma_n$ and $(\alpha_1,\dots,\alpha_n)$ by
\begin{equation*}
H_{\Gamma_n, \alpha_1, \dots, \alpha_n}(f_1, \dots, f_n)(x) = \mathrm{p.v.} \int_{\mathbb R} f_1(x-\alpha_1t^{m_1})\cdots f_n(x-\alpha_n t^{m_n}) \,\frac{\mathrm{d}t}{t}\,,
\end{equation*}
for $x\in\mathbb R$ and Schwartz functions $f_1, \dots, f_n$. Similarly, we define the multilinear maximal function associated with $\Gamma_n$ and $(\alpha_1,\dots,\alpha_n)$ by
\begin{equation*}
M_{\Gamma_n, \alpha_1, \dots, \alpha_n}(f_1, \dots, f_n)(x) = \sup_{\e>0} \frac{1}{2\e} \int_{-\e}^{\e} \big| f_1(x-\alpha_1 t^{m_1})\cdots f_n(x-\alpha_n t^{m_n})\big| \,\mathrm{d}t\,. 
\end{equation*}

\begin{theorem}\label{thm-al}
Let $n\geq 2$.  The multilinear Hilbert transform $H_{\Gamma_n, \alpha_1, \dots, \alpha_n}$ satisfies 
\begin{equation*}
\sup_{\alpha_1, \dots, \alpha_n{\neq 0}} \big\| H_{\Gamma_n, \alpha_1, \dots, \alpha_n}(f_1, \dots, f_n)\big\|_r\lesssim \|f_1\|_{p_1}\cdots\|f_n\|_{p_n}\,,
\end{equation*}
for all {$p_1, \dots, p_n\in (1,\infty)$ and $r\in (1/2, \infty)$} whenever
$$
 \frac{1}{r} =\frac{1}{p_1}+\dots + \frac{1}{p_n}\,.  
$$
The same uniform boundedness result holds for the multilinear maximal function $M_{\Gamma_n,\alpha_1,\dots,\alpha_n}$ for all {$p_1, \dots, p_n\in (1,\infty]$ and $r\in (1/2, \infty]$ satisfying the H\"older relation.} 
\end{theorem}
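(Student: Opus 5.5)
Since $\alpha_j$ only rescales, the plan is to rerun the proof of Theorem \ref{thm} and check that every constant is independent of $\alpha$. First normalize. By dilating $x$ (which changes all norms by a common power, compatible with the H\"older relation) we may assume $\alpha_1=\pm1$. Changing $t\mapsto -t$ when needed, and absorbing signs into reflections $f_j(\cdot)\mapsto f_j(-\cdot)$ where possible, we reduce to sign patterns $\alpha_j=\epsilon_j\lambda_j^{m_j}$ with $\lambda_j>0$. Writing $f_j(x-\alpha_jt^{m_j})$ in frequency gives the phase $\alpha_j\xi_jt^{m_j}$. We then decompose $\frac{dt}{t}=\sum_k 2^k\rho(2^kt)\,dt$. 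The $j$-th natural frequency scale at step $k$ is $2^{m_jk}/|\alpha_j|$ instead of $2^{m_jk}$.

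The key point is that Proposition \ref{prop-0} is applied separately to each $T_k$. The substitution $t\mapsto$ fixed and $f_j\mapsto f_j(|\alpha_j|\,\cdot)$ shows that $T_k$ for the curve with coefficients $\alpha$ is conjugate to the case $\alpha_j=\pm1$. So the decay $\delta^{-c}$ holds with constants independent of $\alpha$, once the annuli are redefined as $\{|\xi|\sim 2^{m_jk}\delta/|\alpha_j|\}$. This works because Proposition \ref{prop-0} is single-scale and each $f_j$ is dilated individually; the sign $\epsilon_j$ is harmless since its proof (Sobolev estimate plus projection) does not distinguish signs.

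Next we redo the reduction of Section \ref{To-Para} with $\Phi_{j,k,\delta}$ and $\Psi_{j,k,\delta}$ replaced by versions adapted to $2^{m_jk}\delta/|\alpha_j|$. The pieces where some $f_j$ has frequency $\delta\gg1$ above its scale are summed using the $\delta^{-c}$ gain, together with a paraproduct bound that loses at most a power of $\log\delta$ or $\delta^{\varepsilon}$. What remains are translated and untranslated paraproducts $\Pi_{\delta_1,\dots,\delta_J}$, now with scales $2^{m_jk}\delta_j|\alpha_j|^{-1}$ and translations $2^{n_j}2^{-m_jk}\delta_j^{-1}\alpha_j\epsilon_jt^{m_j}$. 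The main obstacle is proving that the paraproduct estimates (the untranslated algorithm, the $2$-translated case, and the translated Littlewood--Paley theorem for $J\ge3$) are uniform in these extra dilations. The reason to expect this is that the paper already requires uniformity in $\delta_1,\dots,\delta_n$, and $|\alpha_j|^{-1}$ can be absorbed into $\delta_j$ as an arbitrary positive factor, not necessarily dyadic. So I would first check that those proofs use only the lacunarity of $k\mapsto 2^{m_jk}$ with distinct $m_j$, and never any relation between the $\delta_j$. The delicate point is how the frequency supports of different $j$ at the same $k$ compare, which controls which factor dominates. Here one must split into at most $O(1)$ regimes according to $\log|\alpha_j|$ modulo the separation between the $m_j$, which gives bounded overlap uniformly in $\alpha$.

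For the maximal function, positivity lets us take $f_j\ge0$ and bound the operator by $\sup_k$ of the positive averages $T_k$. We split each $f_j$ into its low-frequency part at scale $2^{m_jk}/|\alpha_j|$ and high-frequency shells. The all-low term is pointwise dominated by $\prod_j Mf_j$ after uniform Taylor control, and Hölder with the Hardy--Littlewood maximal function then handles it, including the endpoint exponents $\infty$. For each high shell $\delta$, we bound $\sup_k$ by $(\sum_k|\cdot|^{r})^{1/r}$, or by $\ell^2$ combined with the square-function paraproduct estimates above. The uniform $\delta^{-c}$ from Proposition \ref{prop-0} then pays for the summation over $\delta$. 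Here the same uniform paraproduct theory, with $\sup$ replacing the sum over $k$, is the crux.
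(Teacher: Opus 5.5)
Your outline matches the paper's: rerun the proof of Theorem~\ref{thm} with the annuli rescaled to $|\xi_j|\sim 2^{m_jk}\delta/|\alpha_j|$, absorb $|\alpha_j|^{-1}$ into the paraproduct scales, and use that Theorem~\ref{thm1/2} is already uniform in $\alpha$. However, the one step that is not merely notational is justified incorrectly. You claim that replacing $f_j$ by $g_j=f_j(|\alpha_j|\,\cdot)$ conjugates $T_k$ with coefficients $\alpha$ to the case $\alpha_j=\pm1$. It does not. All inputs are evaluated at the same point $x$, so $f_j(x-\alpha_jt^{m_j})=g_j\bigl(|\alpha_j|^{-1}x-\epsilon_jt^{m_j}\bigr)$, and the spatial argument $|\alpha_j|^{-1}x$ now depends on $j$. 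The only genuine symmetries of the one-dimensional form are $x\mapsto\lambda x$ and $t\mapsto\mu t$, which act by $\alpha_j\mapsto\alpha_j\lambda^{-1}\mu^{m_j}$. These normalize at most two coefficients, so for $n\geq3$ there remain $n-2$ genuine parameters. Uniformity of the $\delta^{-c}$ bound in those parameters is exactly what must be proved, and Proposition~\ref{prop-0} for unit coefficients cannot supply it as a black box. For the same reason you cannot reflect a single $f_j$, since that also reflects $x$; only simultaneous sign changes are available.

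The paper gets the uniform estimate by doing your coordinatewise scaling one level up. In the high-dimensional Sobolev estimate \cite[Theorem~6.1]{KMPWJ} the coordinates $x_1,\dots,x_n$ are independent. There the diagonal dilation $(x_1,\dots,x_n)\mapsto(|\alpha_1|x_1,\dots,|\alpha_n|x_n)$ is a true symmetry: it carries the $\alpha$-curve to the unit-coefficient curve and the annulus $|\xi_j|\sim 2^{m_jk}\delta/|\alpha_j|$ to $|\xi_j|\sim 2^{m_jk}\delta$. Only afterwards are the Becker--Krause projection and the integration by parts carried out. This gives Proposition~\ref{prop-0} with constant and exponent $c$ independent of $\alpha$, and then Proposition~\ref{prop1} follows by interpolation with Theorem~\ref{thm1/2}. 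You should replace your conjugation step with this.

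A secondary point concerns the maximal function. Your first option is to bound $\sup_k$ by $(\sum_k|\cdot|^r)^{1/r}$ and apply the single-scale estimate term by term. This does not close. Whenever a low-pass factor $f_j*\Psi_{j,k,1}$ is present, its $L^{p_j}$ norm does not decay in $k$, so the resulting $k$-sum is not controlled by $\prod_j\|f_j\|_{p_j}$. Only the square-function/paraproduct route is viable there, and that part is left unsupplied.
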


We omit the proof of Theorem \ref{thm-al}, as it is essentially the same as that of Theorem \ref{thm}. 
Here we point out two minor modifications necessitated by the presence of the
coefficients $\alpha_j$. First, Proposition~\ref{prop-0} extends to include coefficients
$\alpha_j$ by applying a scaling argument to the high-dimensional Sobolev
estimate in \cite[Theorem~6.1]{KMPWJ}, provided that $\widehat{f_j}$ is
instead supported in
\[
\left\{
    \xi\in\mathbb{R} :
    |\alpha_j|^{-1}2^{m_jk}\delta
    \leq |\xi|
    \leq |\alpha_j|^{-1}2^{m_jk+1}\delta
\right\}.
\]
Both the implicit constant and the exponent $c$ in the resulting estimate
are independent of the coefficients $\alpha_j$. Second, to apply this
modified Sobolev smoothing inequality, we make the corresponding adjustments
to the frequency decompositions in Section~\ref{To-Para}. All the main steps, including the relevant decomposition and estimates, remain unchanged, with only minor notational modifications. We leave the details to the reader.  Moreover, 
for $n\geq 3$ the threshold $1/2$ in Theorem \ref{thm-al} is sharp  up to the endpoint. Indeed, for the curve $(t^{m_1}/m_1,\dots,t^{m_n}/m_n)$ the boundedness conclusion fails whenever $r<1/2$, as shown by the counterexample constructed in Appendix \ref{appendixA}. For the multilinear Hilbert transform, the counterexample requires at least one of the exponents $m_j$ to be odd, while for the multilinear maximal function no such condition is needed.

In light of the examples provided in Appendices \ref{appendixA} and \ref{appendixB}, it is natural to ask the following question. 

\begin{question}\label{conj-poly}
Given a polynomial curve $\Gamma_n$, what is the optimal threshold $r_{\Gamma_n}$ such that the associated multilinear Hilbert transform $H_{\Gamma_n}$ is bounded from
$
L^{p_1}\times\cdots\times L^{p_n}
\quad\text{to}\quad
L^r
$
for all $p_1,\ldots,p_n>1$ and all $r>r_{\Gamma_n}$ satisfying the H\"older relation \eqref{homo-r}?
\end{question}

In the bilinear setting, Li and Xiao \cite{Li-Xiao} completely resolved this question for polynomial curves of the form $(t,P(t))$, where $P$ has neither a constant nor a linear term.  For general polynomial curves, however, the question remains open even in the bilinear case. The multilinear case $n\geq 3$ appears to be substantially more difficult and remains a subject of ongoing research.
    \\

As a final remark, our approach is not restricted to integer exponents \(m_j\). In principle, it can be extended to certain positive noninteger exponents \(\beta_j\) for which \(t^{\beta_j}\) is real-valued for every \(t\in\mathbb{R}\backslash\{0\}\), such as \(\beta_j=1/3\). Consequently, Theorem \ref{thm} can be extended to multilinear Hilbert transforms along curves of the form
$$
\bigl(\alpha_1t^{\beta_1},\dots, \alpha_n t^{\beta_n}\bigr)
$$
for suitable choices of the distinct exponents $\beta_1, \dots, \beta_n$. \\

\textit{Notation:} We use $A\lesssim B$ to denote the statement that
$A\leq CB$ for an unimportant positive constant $C$. We write $A\lesssim_a B$
to mean that $A\leq C_aB$ for some positive constant $C_a$ that may
depend on $a$. We use $A\preceq_K B$ to denote
\[
A\leq 2^{-K}B.
\]
We write $A\sim B$ if there exist absolute positive constants $C_1<1$
and $C_2>1$ such that
$ 
C_1 A\leq B\leq C_2 A$. 
For two functions $f$ and $g$, we write $f\approx g$ if $f$ and $g$
have essentially the same properties, in the sense that a method
used to handle one can also be applied to the other.

 \textit{The paper is organized as follows.}
The precise statements of the uniform estimates are provided in Section \ref{defPhi-jk}.
In Section \ref{single}, we establish estimates for a single-scale operator. This result allows us to obtain $L^r$ estimates for certain values of $r<1$ in the Sobolev smoothing inequality. This step is essential for reducing the boundedness problem for multilinear Hilbert transforms to the corresponding paraproduct estimates, as demonstrated in Section \ref{To-Para}. In Section \ref{VLP}, we establish a variant of the classical Littlewood--Paley square-function and maximal function estimates that will be used for the translated paraproducts. Section \ref{Banach} is devoted to the study of paraproducts in the Banach range, while Section \ref{Pf-para} is devoted to the corresponding analysis in the quasi-Banach range. In the appendices, we show the sharpness of Theorems \ref{thm} and \ref{thm-al} via counterexamples.

\section{Uniform estimates for the araproducts}\label{defPhi-jk}

In this section, we formulate the uniform estimates for the translated and untranslated paraproducts. Before doing so, let us introduce a decomposition of the underlying index set $\mathbb K$ into so-called {\it well-distributed} sets. This decomposition helps organize the distribution of the Fourier transforms of the functions appearing in the paraproducts, thereby simplifying our analysis.

\begin{definition}
Let $n\geq 2$, $K\geq 1$ be a real number, $m_1, \dots, m_n$ be distinct positive integers, and $\delta_1, \dots, \delta_n$ be dyadic numbers.  A subset $\mathbb K$ of $\mathbb Z$ is called well-distributed of order $K$ if either its cardinality is $O_n(K)$,  or there exists a permutation $(i_1,\dots,i_n)$ of
$(1,\dots,n)$ such that
\begin{equation*}
2^{m_{i_1}k}\delta_{i_1}
\preceq_K 2^{m_{i_2}k}\delta_{i_2} \preceq_K \cdots
\preceq_K 2^{m_{i_n}k}\delta_{i_n} \qquad\text{for every } k\in\mathbb K.
\end{equation*}
The latter is called a non-degenerate well-distributed set, while the former is called a degenerate well-distributed set.
\end{definition}

Using this definition, we obtain the following theorem.

\begin{theorem}\label{de-K}
Any subset $\mathbb K$ of $\mathbb Z$ can be partitioned into $O_n(1)$ subsets that are well-distributed of order $K$. 
\end{theorem}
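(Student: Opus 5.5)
Taking logarithms turns this into a statement about affine functions of $k$. Writing $\delta_j=2^{d_j}$, we have $\log_2(2^{m_jk}\delta_j)=m_jk+d_j=:\ell_j(k)$, where the $\ell_j$ are affine functions of $k\in\mathbb Z$ with pairwise distinct slopes $m_j$. The relation $2^{m_ik}\delta_i\preceq_K 2^{m_jk}\delta_j$ says exactly $\ell_j(k)-\ell_i(k)\ge K$. For each pair $i<j$ the difference $\ell_j-\ell_i=(m_j-m_i)k+(d_j-d_i)$ is affine in $k$ with nonzero integer slope. It therefore has exactly one real zero $k_{ij}=-(d_j-d_i)/(m_j-m_i)$, and it is strictly monotone in $k$.

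The plan is to cut $\mathbb Z$ at these zeros. For each pair $(i,j)$ let
\[
B_{ij}=\{k\in\mathbb Z: |\ell_j(k)-\ell_i(k)|<K\}.
\]
Since $|m_j-m_i|\ge1$, the set $B_{ij}$ is contained in an interval of length at most $2K$ around $k_{ij}$, so it has at most $2K+1$ elements. This is $O(K)$, because $K\ge1$. Set $B=\bigcup_{i<j}B_{ij}$. Its cardinality is at most $\binom n2(2K+1)=O_n(K)$, so $B$ by itself is a degenerate well-distributed set of order $K$.

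Next consider the complement $\mathbb Z\setminus B$. The $\binom n2$ points $k_{ij}$ cut $\mathbb R$ into at most $\binom n2+1$ open intervals $I_1,\dots,I_M$, together with the points themselves. Those points are at most $\binom n2$ integers and already lie in $B$, since each $\ell_j-\ell_i$ vanishes there. On each interval $I_s$ every difference $\ell_j-\ell_i$ has a constant sign, so the ordering of the values $\ell_1(k),\dots,\ell_n(k)$ is the same for all $k\in I_s$. Moreover all these values are distinct, and by the choice of the permutation $(i_1,\dots,i_n)$ that sorts them, this permutation is independent of $k\in I_s$. For $k\in I_s\setminus B$ each consecutive gap satisfies $\ell_{i_{r+1}}(k)-\ell_{i_r}(k)\ge K$, which is exactly the chain $2^{m_{i_1}k}\delta_{i_1}\preceq_K\cdots\preceq_K 2^{m_{i_n}k}\delta_{i_n}$. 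Hence each $(I_s\cap\mathbb Z)\setminus B$ is a non-degenerate well-distributed set.

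Intersecting all of these pieces with $\mathbb K$ gives the partition
\[
\mathbb K=(\mathbb K\cap B)\cup\bigcup_{s=1}^{M}\bigl(\mathbb K\cap I_s\setminus B\bigr).
\]
It has at most $\binom n2+2=O_n(1)$ pieces, and well-distributedness passes to subsets, so each piece is well-distributed of order $K$. The only step that needs care is checking that the permutation is constant on each interval; this follows because the order can only change where some difference $\ell_j-\ell_i$ changes sign. Notably, the number of pieces depends only on $n$, not on $K$ or the $\delta_j$, while the $K$-dependence is absorbed into the size of the degenerate piece.
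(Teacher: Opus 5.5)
Your proof is correct and is essentially the paper's argument: your $\mathbb K\cap B$ is exactly the paper's leftover set $\mathbb K'$, controlled by the same distinct-slope count $\#\mathbb K(i_1,i_2)\lesssim K$, and your pieces $\mathbb K\cap I_s\setminus B$ coincide with the paper's classes $\mathbb K[i_1,\dots,i_n]$, since each ordering occurs on only one interval. The paper indexes the pieces directly by the sorting permutation, which makes your constancy-on-intervals check unnecessary, while your indexing yields the sharper count $\binom n2+2$ instead of $n!+1$.
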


\begin{proof}
We associate to each $k$ a tuple $ (2^{m_1k}\delta_1, \dots, 2^{m_nk}\delta_n )$ (whose components actually correspond to the scales of the supports of the Fourier transforms of $f_{j, k, \delta_j}$ defined in \eqref{s4-1}).  
We will classify the collection of all $k\in \mathbb{K}$ according to the properties of these tuples. 
For any two distinct indices $i_1, i_2\in \{1, \dots, n\}$, define 
\begin{equation*}
 \mathbb K(i_1, i_2):=\left\{ k\in\mathbb{K} :    2^{-K}2^{m_{i_1}k}\delta_{i_1}
 \leq 2^{m_{i_2}k}\delta_{i_2}\leq 2^{K} 2^{m_{i_1}k}\delta_{i_1} \right\}\,.
\end{equation*}
Since the $\delta_{j}$ are dyadic numbers and the $m_j$ are distinct,  we observe that 
\begin{equation}\label{keyK-0}
\#\mathbb K(i_1, i_2)\leq 1+\frac{2K}{|m_{i_1}-m_{i_2}|}\lesssim K\,.
\end{equation}
We partition $\mathbb K$, the index set of $k$, into
$$
 \mathbb K = \bigcup_{\substack{ i_1, \dots, i_n\in\{1, \dots, n\}\\ 
 i_1,\dots, i_n\, {\rm distinct}}} \mathbb K[i_1, \dots, i_n] \cup  \mathbb K'\,,
$$
where $  \mathbb K[i_1, \dots, i_n]$ is defined by
\begin{equation*}
 \mathbb K[i_1, \dots, i_n]:= \left\{ k\in\mathbb{K} :  2^{m_{i_1}k}\delta_{i_1} \preceq_K  2^{m_{i_2}k}\delta_{i_2} \preceq_K  \dots \preceq_K 
  2^{m_{i_n}k}\delta_{i_n} \right\}\,,
 \end{equation*}
and $\mathbb K'$ denotes  the complement of the union of the sets $\mathbb K[i_1, \dots, i_n]$.
In view of the observation \eqref{keyK-0}, it follows that $\mathbb K'$ is a finite set
whose cardinality is bounded by $C_n K$, where 
$C_n$ is  a constant depending only on $n$.  This completes the proof.
\end{proof}

Theorem \ref{de-K} indicates that we only need to consider well-distributed index sets $\mathbb K$.

\begin{definition} 
A collection of functions $\big\{f_k\big\}_{k\in\mathbb K}$ is said to be of Littlewood--Paley type with parameters $\alpha$ and $\delta$ if there exist $\alpha\in\mathbb{N}$ and a dyadic number $\delta$ such that, for each $k\in\mathbb K$, the Fourier transform of $f_k$ is supported in the set
\begin{equation*}
    \{\xi\in\mathbb{R} : 2^{\alpha k}\delta\leq |\xi|\leq 2^{\alpha k+1}\delta\}.
\end{equation*}
\end{definition}

\begin{definition}
Let $J\in\{0,2,\dots,n\}$. The paraproduct $\Pi_{\delta_1,\dots,\delta_J}$, defined as in \eqref{def-Pi}, is called admissible if either the underlying index set $\mathbb K$ is finite, with cardinality bounded above by a constant $C_n$, or the paraproduct satisfies the following conditions:
\begin{itemize}
\item  $\mathbb K$ is well-distributed of order $n_J^*+100\max\big\{n, m_1, \dots, m_n\big\}$, where 
$n_J^*$ is defined by
$$
  n_J^* =\max\{0, n_1, \dots, n_J \}\,.  
$$ 

\item  If $J=0$, then there exists $i_1\in \{1, \dots, n\}$ such that $\{f_{i_1, k, \delta_{i_1}, \Psi}\}_{k\in \mathbb K}$ is of Littlewood--Paley type
with parameters $m_{i_1}$ and $\delta_{i_1}$.  For convenience, we also say that the index $i_1$ is of Littlewood--Paley type if $\{f_{i_1,k,\delta_{i_1},\Psi}\}_{k\in\mathbb K}$ is of Littlewood--Paley type, particularly when referring to the index $i_1$ associated with the functions in the paraproduct.
\end{itemize}
\end{definition}

For the admissible paraproducts, we are able to establish the following uniform estimates, whose proofs will be provided in Sections \ref{Banach} and \ref{Pf-para}.

\begin{theorem}\label{para}
For $n\geq 2$, let $ \Pi_{ \delta_1, \dots, \delta_J} $, defined as in \eqref{def-Pi},  be an admissible paraproduct  and $r_{J}$ be defined by 
\begin{equation*}
r_{J} =
\begin{cases}
\frac{1}{n}, & \text{if } J=0 \,\, {\rm or}\,\, J=2;\\[2mm]
\frac{1}{2}, & \text{if } J\geq 3.
\end{cases}
\end{equation*}
 Then, for any $\e>0$,  there is a constant $C$,  independent of 
$\delta_1, \dots, \delta_n$  and $n_1, \dots, n_J$,  such that 
\begin{equation*}
\big\| \Pi_{\delta_1, \dots, \delta_J}  (f_1, \dots, f_n) \big\|_r \leq C 2^{\e n_J^*} \prod_{j=1}^n \big\|f_j\big\|_{p_j}\,,
\end{equation*}
 for all $p_1, \dots, p_n>1$ and $r> r_{J}$, provided that $(p_1, \dots, p_n, r)$ obeys the H\"older relation \eqref{homo-r}.
\end{theorem}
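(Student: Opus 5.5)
We plan to prove Theorem \ref{para} in two stages: first the Banach range $r>1$ by square-function/maximal-function arguments, then the quasi-Banach range by a level-set (stopping-time) argument that uses the Banach bounds as input. Throughout, the trivial case where $\mathbb K$ has cardinality at most $C_n$ is disposed of directly. For a single $k$, each translated factor satisfies $|f_{j,k,\delta_j,n_j,t}(x)|\lesssim 2^{\e' n_j}M f_j(x)$ uniformly in $t\in\operatorname{supp}\rho^*$, because the translation $2^{n_j}2^{-m_jk}\delta_j^{-1}t^{m_j}$ is at most $2^{n_j+2m_j}$ times the frequency scale $(2^{m_jk}\delta_j)^{-1}$. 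Hölder's inequality for $r>1/n$ then finishes this case.

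\textbf{Non-degenerate $\mathbb K$.} Here $\mathbb K$ is well-distributed of order $n_J^*+100\max\{n,m_j\}$. Fix the ordering $2^{m_{i_1}k}\delta_{i_1}\preceq_K\cdots\preceq_K 2^{m_{i_n}k}\delta_{i_n}$. Since the gap $2^{K}$ exceeds $2^{n_J^*}$ by a large margin, the Fourier support of the $k$-th summand is essentially localized. For $J=0$ it sits in an annulus of size $2^{m_{i_n}k}\delta_{i_n}$ when $i_n$ is of Littlewood--Paley type. Otherwise we locate the largest-frequency index among the Littlewood--Paley-type factors and note that the other factors are low-frequency at that scale. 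The translated factors with $j\le J$ are always of Littlewood--Paley type by construction, since $\Phi$ has annular support.

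In the Banach range we dualize against $g$ and write the sum over $k$ as $\sum_k\int\prod_j F_{j,k}\cdot\widetilde\Delta_k g$, with $\widetilde\Delta_k$ a frequency projection adapted to the top scale. We then apply Cauchy--Schwarz in $k$ to two Littlewood--Paley-type factors: the top one (or $g$), and one further factor, which is either a translated factor or $i_1$. The remaining factors are bounded by maximal functions.

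For the translated factors we invoke the translated Littlewood--Paley theorem of Section \ref{VLP}. It gives
\[
\Big\|\Big(\sum_k \sup_{t}|f_{j,k,\delta_j,n_j,t}|^2\Big)^{1/2}\Big\|_p\lesssim 2^{\e n_j}\|f_j\|_p ,
\]
together with the analogous bound for the maximal function $\sup_k\sup_t$. The losses are only $2^{\e n_j}$, via the standard shifted-maximal-function argument, which costs $\log(2+2^{n_j})$ and is interpolated with trivial $L^2$ bounds. Multilinear Hölder and the square-function bound for $g$ then give the $r>1$ estimates for every $J$.

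\textbf{Quasi-Banach range.} Since $f_{j,k,\delta_j,\Psi}$ is not of Littlewood--Paley type for $j>J$, we plan a telescoping decomposition. Write each $\Psi$-factor as a sum of annular pieces over scales below $k$, which reduces $J=0$ to paraproducts where two factors are Littlewood--Paley type plus lower-order commutator terms. Then we run a stopping-time argument in the style of Muscalu--Tao--Thiele, based on the size and energy of the sequences $\{f_{j,k}\}_k$ on dyadic intervals.

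For $J=0$ and $J=2$, the two square-function factors and $n-2$ maximal factors can each be estimated by a restricted weak-type bound: localize to the exceptional set $\{Mf_j\gtrsim |E_j|^{1/p_j}\}$, and use Schwartz tails away from it. This gives every $r>1/n$, after which multilinear interpolation yields strong type.

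For $J\ge3$, three or more factors are translated. A translated factor is controlled only by its square function or by $\sup_t$, and the latter is not localized to a single tile. We therefore settle for Cauchy--Schwarz on two square functions with all remaining factors bounded by maximal functions, applying the estimate in $L^{1/2+}$. This gives $r>1/2$, matching the theorem.

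\textbf{Main obstacle.} We expect the hardest step to be the localization for translated factors in the quasi-Banach range. The shifts $2^{n_j}$ spread the mass outside the exceptional set, and the resulting tail losses must stay polynomial in $2^{n_j}$, hence absorbable into $2^{\e n_J^*}$ only after interpolating with the $r>1$ bound. Our first attempt is to enlarge the exceptional set by the factor $2^{n_J^*}$, which costs $2^{C n_J^*}$ in measure. We then interpolate this crude bound with the $2^{\e n_J^*}$ Banach bound to reach an arbitrarily small exponent.
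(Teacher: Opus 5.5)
There is a genuine gap in the quasi-Banach treatment of the translated paraproducts. First, the translated bounds you invoke with $\sup_t$ inside are false. Theorem~\ref{LP} and Corollary~\ref{corLP} concern a \emph{fixed} shift $\beta$. Take a single wave packet at scale $s^{-1}$, $s=2^{m_jk}\delta_j$ (e.g.\ $f_j=\Phi_{j,k,\delta_j}$). Then $\sup_{1/2\le |t|\le 2}|f_{j,k,\delta_j,n_j,t}|$ stays comparable to its peak on an interval of length $\sim 2^{n_j}s^{-1}$, so its $L^p$ norm is $\gtrsim 2^{n_j/p}\|f_j\|_p$. For the same reason, the pointwise bound in your finite-$\mathbb K$ step is only $2^{n_j}Mf_j$, not $2^{\varepsilon' n_j}Mf_j$. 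For $r\ge 1$ this is repairable: keep the $t$-integral outside by Minkowski and apply Theorem~\ref{LP} at each fixed $t$, as in Proposition~\ref{Lr>1}. The finite case is handled without loss by the single-scale estimate Theorem~\ref{thm1/2}, which is uniform in the coefficients $\alpha_j$; this is what Section~\ref{single} is for. For $r<1$, however, Minkowski is unavailable, and this $t$-integral is exactly the difficulty.

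Your fallback is to enlarge the exceptional set by $2^{n_J^*}$, accept a loss $2^{Cn_J^*}$ near $r_J$, and interpolate with the Banach bound. This cannot give an arbitrarily small exponent. Interpolating $2^{Cn}$ at $1/r_0$ with $2^{\varepsilon n}$ at $1/r_1\le 1$ gives $2^{((1-\theta)C+\theta\varepsilon)n}$ at the target, and $1-\theta\ge r_J(1/r-1)$ however $r_0$ is chosen. So at a fixed $r<1$ the exponent is bounded below by a fixed multiple of $C$. Small $\varepsilon$ is genuinely needed, because in Section~\ref{To-Para} the growth $(\prod_j\delta_j)^{\varepsilon}$ in \eqref{e} must be beaten by the small, fixed Sobolev decay. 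The missing ingredient is an estimate near $r_J$ that already loses only $2^{\varepsilon n_J^*}$. The paper imports this from Li's bilinear argument in \cite{Li2}, in the form of the restricted-type bounds \eqref{r_Jest1} and \eqref{r_Jest1>2}, or equivalently the bilinear bound for $\Pi^{(2)}$ with absolute values for $1/2<r<1$. Only then does it interpolate with \eqref{bana-J>2}.

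The untranslated case has a separate gap, already for $r\ge 1$. Admissibility for $J=0$ guarantees only one Littlewood--Paley-type index $\ell$. When some $\Psi$-factor has scale above $2^{m_\ell k}\delta_\ell$, your claim that ``the other factors are low-frequency at that scale'' is false. The $k$-th summand then has Fourier support in a full ball, so neither $\widetilde\Delta_k g$ nor any second factor supplies a square function, and Cauchy--Schwarz in $k$ has nothing to pair with $f_{\ell,k,\delta_\ell}$.

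This irregular paraproduct is where the paper does real work, in Lemma~\ref{lemPi0}:
\begin{itemize}
\item For each above-scale index $i\in\mathbb L$ it writes $f_{i,k,\delta_i,\Psi}=f_i-\sum_{\delta>\delta_i}f_{i,k,\delta}$.
\item The $f_i$ term is handled by induction on the number of functions.
\item The high-frequency tail is of Littlewood--Paley type at scale $2^h$. It is handled by the reverse square function estimate Lemma~\ref{RSFE}, valid for all $p>0$, followed by the maximal partial-sum bound Lemma~\ref{MaxRS} for $\sup_h$ of the sums over $k$ with $h>m_ik+\log_2\delta_i$.
\item The indices of $\mathbb L$ are peeled off one at a time.
\end{itemize}
The same scheme runs for $r$ close to $1/n$, and interpolation with \eqref{r-geq-1} finishes. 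Your sketch expands the $\Psi$-factors into annular pieces below their scales, with unspecified ``lower-order commutator terms'' and an MTT size/energy argument. It does not say how the resulting maximal partial sums over $h$-dependent ranges of $k$ are controlled in $L^r$, $r<1$. Moreover, the factors live at widely separated scales $2^{m_jk}\delta_j$, so they do not fit a single-scale tile model.
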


The biggest enemy arises from the translation $x -2^{n_j} 2^{-m_jk}\delta_j^{-1} {t^{m_j}}$.
The corresponding Littlewood--Paley square function is 
\begin{equation*}
\bigg( \sum_{k\in \mathbb K} \big| f_{j, k, \delta_j, n_j, t}(x) \big|^2 \bigg)^{1/2}\,,
\end{equation*}
which satisfies 
$$
\bigg\|\bigg( \sum_{k\in \mathbb K} \big| f_{j, k, \delta_j, n_j, t}\big|^2 \bigg)^{1/2}\bigg\|_p
\lesssim n_j \|f_j\|_p\,, 
$$
for any $p>1$, as proved in Section \ref{VLP} using the standard Calder\'on--Zygmund theory.  This estimate plays an important role in our proof, especially in the case $r\geq 1$. However, an additional difficulty arises when $r<1$. This difficulty is the main reason why our argument yields estimates only in the range $r>1/2$ for $J\geq 3$. 
In view of the counterexamples presented in the appendices, the bound $r>1/n$ should not be regarded as the appropriate goal in general; instead, $r>1/(2+n-J)$ appears to be the natural threshold for paraproducts associated with certain moment curves.

\subsection{Reverse square function estimates}
There is a useful tool from the standard $H^p$-theory that allows us to relate the sum of a Littlewood--Paley-type collection to the Littlewood--Paley square function. Let us state the following reverse square-function estimates, which are consequences of He's Corollary 4 in \cite{He}. We include a proof for the reader’s convenience.

\begin{lemma}\label{RSFE}  
      Let $p\in (0, \infty)$, $\mathbb K\subset \mathbb Z$, and $\{f_k\}_{k\in\mathbb K}$ be of Littlewood--Paley type with parameters $\alpha$ and $\delta$. 
Suppose that $\sum_{k\in\mathbb K} f_k$ converges almost everywhere. Then we have  
\begin{equation}  \label{RSFE-1}
		\bigg\| \sum_{k \in \mathbb K} f_k \bigg\|_{p}
		\lesssim_{p}
		\bigg\| \bigg( \sum_{k \in \mathbb K} |f_k|^2 \bigg)^{1/2} \bigg\|_{p},
	\end{equation}
    where the implicit constant is independent of $\delta$.  
\end{lemma}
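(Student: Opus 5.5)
We reduce to the case $p\le 1$ and prove that case by duality between $H^p$ and the homogeneous Besov/Lipschitz space $\dot\Lambda$, i.e.\ by He's argument in \cite{He}.

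\emph{Case $1<p<\infty$.} Here the estimate is classical. Fix a Schwartz function $\Theta$ with $\widehat\Theta\equiv 1$ on $\{1\le|\xi|\le 2\}$, supported in $\{1/2\le|\xi|\le 4\}$. Put $\widehat{\Theta_k}(\xi)=\widehat\Theta(\xi/(2^{\alpha k}\delta))$, so that $f_k=\Theta_k*f_k$. Pairing $\sum_k f_k$ with $g\in L^{p'}$ gives
$$
\Big|\int \sum_k f_k\,\bar g\Big| = \Big|\sum_k\int f_k\,\overline{\Theta_k^\sharp* g}\Big| \le \Big\|\big(\sum_k|f_k|^2\big)^{1/2}\Big\|_p\,\Big\|\big(\sum_k|\Theta_k^\sharp*g|^2\big)^{1/2}\Big\|_{p'}.
$$
The last factor is $\lesssim\|g\|_{p'}$. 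This follows from the standard Littlewood--Paley inequality, since the frequencies $2^{\alpha k}\delta$ are lacunary with ratio $2^\alpha\ge 2$. The constant is independent of $\delta$ by dilation invariance. We justify the interchange of sum and integral by first restricting to finite subsets of $\mathbb K$ and then using the a.e.\ convergence together with Fatou's lemma.

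\emph{Case $0<p\le 1$.} We show that $F=\sum_k f_k$ lies in $H^p$ with $\|F\|_{H^p}\lesssim\|S\|_p$, where $S=(\sum_k|f_k|^2)^{1/2}$; since $\|F\|_p\le\|F\|_{H^p}$, this suffices.

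1. Each $\widehat{f_k}$ vanishes near the origin and near infinity in a lacunary way. Hence $F$ can be written as $\sum_k \Theta_k*f_k$, which is the synthesis operator of the (continuous-parameter) Littlewood--Paley machinery applied to the sequence $(f_k)$.

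2. The main step is the tent-space/Triebel--Lizorkin synthesis estimate
$$
\Big\|\sum_k \Theta_k*f_k\Big\|_{H^p}\lesssim \Big\|\big(\sum_k|f_k|^2\big)^{1/2}\Big\|_p .
$$
This is the boundedness of $\sum_k \Theta_k*(\cdot)$ from $L^p(\ell^2)$ to $\dot F^0_{p,2}=H^p$, which holds for all $0<p<\infty$.

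3. To prove step 2, we compute $\phi_j*F$ for a Littlewood--Paley function $\phi_j$ at scale $2^{\alpha j}\delta$. Only $|j-k|\le 2$ contributes, so $\phi_j*F=\sum_{|k-j|\le2}\phi_j*\Theta_k*f_k$. Because $\widehat{f_k}$ is supported in a ball of radius $\sim 2^{\alpha k}\delta$, the Peetre maximal inequality gives
$$
|\phi_j*\Theta_k*f_k|\lesssim \big(M(|f_k|^s)\big)^{1/s}
$$
for any $0<s<p$. The Fefferman--Stein vector-valued maximal inequality on $L^{p/s}(\ell^{2/s})$ then yields $\|(\sum_j|\phi_j*F|^2)^{1/2}\|_p\lesssim\|S\|_p$. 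The left-hand side is the $H^p$ quasinorm of $F$; this identification uses that $\widehat F$ avoids the origin, so polynomial ambiguities are excluded once $F$ is a tempered distribution.

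4. Finally, the a.e.\ limit of $\sum_k f_k$ must coincide with the distributional limit in $H^p$. We show this by applying the estimate to finite truncations $\mathbb K_N$, which form a Cauchy sequence in $H^p$ by dominated convergence for $S$. Their $H^p$ limit then agrees with the a.e.\ limit along a subsequence.

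Constants are independent of $\delta$ because every step commutes with the dilation $x\mapsto\delta x$.

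We expect the main obstacle to be step 3 (equivalently step 2). This is where $p\le1$ forces Peetre-type maximal control instead of duality, and where the Fourier support of $f_k$ (not just of $\Theta_k$) must be used.
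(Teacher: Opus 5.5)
Your argument is essentially the paper's: both bound the Littlewood--Paley pieces of the truncations $F_N=\sum_{|k|\le N}f_k$ by $(M(|f_k|^{s}))^{1/s}$ for nearby $k$ via the Peetre maximal inequality (the paper takes $s=p/2$), apply the Fefferman--Stein vector-valued inequality, use the square-function characterization of $H^p$ together with $\|F_N\|_p\lesssim\|F_N\|_{H^p}$, and pass to the full sum by a limiting argument (Fatou in the paper). Two small corrections: the $H^p$--Lipschitz duality announced in your first sentence is never actually used, and the synthesis bound in your step 2 fails for arbitrary sequences in $L^p(\ell^2)$ when $p\le 1$, so it holds only for band-limited $f_k$, which is how your step 3 in fact uses it.
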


\begin{proof}
	 Note that the case $p > 1$ follows from the classical Littlewood--Paley theory for $L^p$ spaces. Here we only prove the case $p \leq 1$. It suffices to consider the case $\mathbb K=\mathbb Z$. For $N\in \mathbb N$, let 
     $$
      F_N=\sum_{|k|\leq N}f_k\,.
     $$
First, we represent 
\begin{equation}\label{al-del}
  \wh{f_k}(\xi) = \wh{f_k}(\xi) \wh\phi\big( \frac{\xi}{ 2^{\alpha k}\delta}\big) \,,    
\end{equation}
 where $\phi:\mathbb R\rightarrow \mathbb C$ is a Schwartz function whose Fourier transform is supported in $\{\xi\in\mathbb R: 
1/2<|\xi|<3\}$ and equals $1$ on $ \{\xi\in\mathbb R:1\leq |\xi|\leq 2\}$.   Then
by the  Peetre--Fefferman--Stein maximal inequality,
we have, for large  $N'\in\mathbb N$, 
$$
\big| f_k(x)\big|\lesssim \int \frac{\big| f_k(y) 2^{\alpha k}\delta \big| }{\big( 1+ 2^{\alpha k}\delta|x-y|\big)^{N'}} \,\mathrm{d}y \lesssim 
 \bigg(M\big( |f_k|^{p/2}\big)\bigg)^{2/p}\,,
$$
where $M$ denotes the Hardy--Littlewood maximal function.  Then, by the vector-valued inequality for $M$ (\cite[Theorem 5.6.6]{GTM249}) in $L^2(\ell^{4/p})$, we have
		\begin{equation}\label{MAX}
		\bigg\| \bigg( \sum_k  \bigg(M\big( |f_k|^{p/2}\big)\bigg)^{4/p}\bigg)^{1/2} \bigg\|_p
		\lesssim 
		\bigg\| \bigg( \sum_k |f_k|^2 \bigg)^{1/2} \bigg\|_p.
	\end{equation}
Moreover, notice that 
\begin{equation}\label{Del}
    \Delta_k F_N=\sum_{|\ell|\leq C_0}\Delta_k f_{k+\ell}\,,
\end{equation}
where $C_0$ is some absolute constant and $\Delta_k f$ is defined by $\wh{\Delta_k f}(\xi)= \wh{f}(\xi) \wh\phi( \frac{\xi}{ 2^{\alpha k}\delta}) $.
Combining \eqref{Del} with \eqref{MAX}, we obtain
\begin{equation}\label{DEL-1}
		\bigg\| \bigg( \sum_k  \big |\Delta_k F_N\big|^{2}\bigg)^{1/2} \bigg\|_p
		\lesssim 
		\bigg\| \bigg( \sum_k |f_k|^2 \bigg)^{1/2} \bigg\|_p.
	\end{equation}

	By the Littlewood--Paley characterization of $H^p$ (\cite[Theorem 2.2.9]{GTM250}) and the continuous embedding $H^p \hookrightarrow L^p$, there exists a unique polynomial $Q$ such that $F_N - Q \in H^p$, and
		\begin{equation*}
		\big\|F_N - Q\big\|_{L^p} \lesssim_p \big\|F_N - Q\big\|_{H^p} 
		\lesssim \bigg\| \bigg( \sum_k |\Delta_k F_N|^2 \bigg)^{1/2} \bigg\|_p.
	\end{equation*}
Since the right-hand side of \eqref{RSFE-1} can be assumed finite without loss of generality, we have 
$Q=0$, and hence the desired estimate \eqref{RSFE-1}  follows from  \eqref{DEL-1} and Fatou's lemma. 
\end{proof}

In applications, the number $\alpha$ will take only the finitely many positive integer values $m_1$, \dots, $m_n$.

We also have a similar result for a certain maximal operator that will arise in our study of the paraproduct.

\begin{lemma}\label{MaxRS}  
      Let $p\in (0, \infty)$, $\mathbb K\subset \mathbb Z$, $\alpha_1\in \mathbb{N}$, and $\delta_1\in 2^{\mathbb Z}$. Let $\{f_k\}_{k\in\mathbb K[\alpha_1, \delta_1]}$ be of Littlewood--Paley type with parameters $\alpha$ and $\delta$, where the index set $\mathbb K[\alpha_1, \delta_1]$ is defined by
      $$
      \mathbb K[\alpha_1, \delta_1] =\big\{ k\in\mathbb K:  2^{\alpha k}\delta \preceq_K  2^{\alpha_1 k}\delta_{1}\big\} 
      $$
    for sufficiently large $K$. Suppose that $\sum_{k\in\mathbb K[\alpha_1, \delta_1]} f_k$ converges almost everywhere. Then we have  
\begin{equation}  \label{MaxRS-1}
		\Bigg\| \sup_{h\in\mathbb Z} \bigg|  \sum_{\substack{k \in \mathbb K[\alpha_1, \delta_1] \\  h\geq \alpha_1 k +\log_2\delta_1}} f_k \bigg|\Bigg\|_p
		\lesssim_{p}
		\bigg\| \bigg( \sum_{k \in \mathbb K[\alpha_1, \delta_1]} |f_k|^2 \bigg)^{1/2} \bigg\|_{p},
	\end{equation}
    where the implicit constant is independent of $\delta$ and $\delta_1$.  
\end{lemma}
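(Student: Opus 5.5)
The plan is to reduce the maximal truncation to a smooth frequency cutoff of the full sum and then invoke the Peetre--Fefferman--Stein maximal bound together with Lemma \ref{RSFE}. Set $F=\sum_{k\in\mathbb K[\alpha_1,\delta_1]} f_k$, with the convergence and finiteness assumptions of Lemma \ref{RSFE}. Because $\alpha,\alpha_1$ are fixed integers, the frequency scale $2^{\alpha k}\delta$ of $f_k$ is related monotonically to $k$. We assume $\alpha<\alpha_1$, so the condition $2^{\alpha k}\delta\preceq_K 2^{\alpha_1 k}\delta_1$ forces $k$ to lie above some threshold $k_0$. The other orderings are handled the same way after reversing the roles of the scales.

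For fixed $h$, the constraint $h\geq \alpha_1 k+\log_2\delta_1$ selects $k\le k_h:=\lfloor (h-\log_2\delta_1)/\alpha_1\rfloor$. Monotonicity of $k\mapsto 2^{\alpha k}\delta$ then means the truncated sum $\sum_{k_0\le k\le k_h} f_k$ consists of those $f_k$ with frequency support below $2^{\alpha k_h+1}\delta$. Since the annuli of the collection are lacunary in $k$, each such partial sum equals $P_{h}F$ up to a bounded number of boundary terms. Here $P_h$ is a Littlewood--Paley projection $\widehat{P_hF}=\widehat\Psi(\xi/(2^{\alpha k_h}\delta\, 2^{c}))\widehat F$ with a cutoff chosen so that it is $1$ on the annuli with $k\le k_h$ and vanishes on those with $k\ge k_h+C_0$. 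Hence
\begin{equation*}
\sup_h\Big|\sum_{\substack{k\in\mathbb K[\alpha_1,\delta_1]\\ h\ge \alpha_1k+\log_2\delta_1}} f_k\Big|\le \sup_{j}|P^{(j)}F|+\sum_{|\ell|\le C_0}\sup_k |f_k|,
\end{equation*}
where $P^{(j)}$ runs over the dyadic low-pass projections at scales $2^{\alpha j}\delta$.

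The boundary term is controlled pointwise by $(\sum_k|f_k|^2)^{1/2}$. For the main term, $\sup_j|P^{(j)}F|$ is the grand (smooth) maximal function of $F$, and it is bounded by the Peetre--Fefferman--Stein estimate $\sup_j|P^{(j)}F|\lesssim (M(|F|^{q}))^{1/q}$ with $q<p$ in the case $p>1$. In general it is bounded by the $H^p$ characterization $\|\sup_j|P^{(j)}F|\|_p\lesssim \|F\|_{H^p}$, where the scale $\delta$ is absorbed by dilation invariance. The argument of Lemma \ref{RSFE} already gave $\|F\|_{H^p}\lesssim \|(\sum_k|\Delta_kF|^2)^{1/2}\|_p\lesssim \|(\sum_k|f_k|^2)^{1/2}\|_p$, through \eqref{DEL-1} and the vanishing of the polynomial $Q$. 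Combining these bounds yields \eqref{MaxRS-1}, uniformly in $\delta$ and $\delta_1$.

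The main obstacle is the exact identification of each truncated sum with a smooth projection of $F$ modulo $O(1)$ terms. This requires care because the truncation is indexed by $h$ via the scale $\alpha_1$, while the frequencies live at scale $\alpha$. I expect that the hypothesis $K$ large, together with the relation $\preceq_K$ defining $\mathbb K[\alpha_1,\delta_1]$, guarantees monotonicity and separation, so that each cutoff falls in a gap between consecutive annuli. It may further make the boundary terms disappear entirely, up to finitely many indices near $k_0$.
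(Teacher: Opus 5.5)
Your strategy is the paper's: realize each truncated sum as a smooth low-pass cutoff $\Psi_t*F$ of $F=\sum_k f_k$, dominate the supremum in $h$ by $\sup_{t>0}|\Psi_t*F|$, bound that by $\|F\|_{H^p}$, and conclude with the square-function estimate from the proof of Lemma~\ref{RSFE}.

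\textbf{The gap: the boundary terms.} Suppose $\widehat{P_hF}$ is $1$ on the annuli with $k\le k_h$ and vanishes on those with $k\ge k_h+C_0$. Then the discrepancy is $\sum_{k_h<k<k_h+C_0}P_hf_k$, not a sum of the $f_k$ themselves. Since $|P_hf_k|$ is not pointwise dominated by $|f_k|$, neither your displayed inequality with $\sup_k|f_k|$ nor the claim that the boundary term is ``controlled pointwise by $(\sum_k|f_k|^2)^{1/2}$'' is justified.

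\textbf{How to repair it.} The cutoff and $f_k$ live at comparable frequency scales, so Peetre's inequality gives $|P_hf_k|\lesssim (M(|f_k|^{q}))^{1/q}$. The vector-valued maximal inequality, as in \eqref{MAX}, then finishes.

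\textbf{What the paper does instead.} It avoids boundary terms entirely by splitting the index set into even and odd $k$, so that the first unselected index is at least $\kappa+2$, where $\kappa$ is the largest selected index. Take $\widehat\Psi=1$ on $[-2,2]$ with $\operatorname{supp}\widehat\Psi\subset(-4,4)$, and set $t=2^{\alpha\kappa}\delta$. Then the identity $\sum_{k\le\kappa}f_k=\Psi_t*F$ is exact, because $2^{\alpha(\kappa+2)}\delta\ge 2^{\alpha\kappa+2}\delta$.

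\textbf{The role of $K$.} You expect the $\preceq_K$ condition with large $K$ to create gaps between consecutive annuli; it does not. That condition compares $2^{\alpha k}\delta$ with $2^{\alpha_1k}\delta_1$ at the same $k$. The annuli $\{2^{\alpha k}\delta\le|\xi|\le2^{\alpha k+1}\delta\}$ touch when $\alpha=1$ regardless of $K$, so separation comes only from the parity split (or from $\alpha\ge2$).

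For the same reason your case distinction $\alpha<\alpha_1$ is unnecessary. All that is used is that $\alpha_1>0$ makes the selected indices an initial segment $\{k\le\kappa\}$, and that $k\mapsto2^{\alpha k}\delta$ is increasing.

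\textbf{A minor point.} For $p>1$, the bound $\sup_j|P^{(j)}F|\lesssim (M(|F|^{q}))^{1/q}$ holds only for $q\ge1$, where it is just $MF$, since $F$ is not frequency localized. This does not matter, because the $H^p$ route covers all $p$ anyway.
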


\begin{proof}
    Write $I=\mathbb K[\alpha_1,\delta_1]$ and first assume that $I$ is finite. By splitting $I$ into its even and odd parts and using the $L^p$ quasi-triangle inequality, it suffices to prove the estimate when all indices in $I$ have the same parity. We may therefore assume that distinct indices in $I$ are separated by at least two.

    Choose a fixed Schwartz function $\Psi$ such that
\[
 \widehat\Psi(\xi)=1\quad\text{for }|\xi|\leq2,
 \qquad \operatorname{supp}\widehat\Psi\subset(-4,4),
\]
and write $\Psi_t(x)=t\Psi(tx)$ for $t>0$. In particular,
$\int_\mathbb R\Psi(x)\,\mathrm{d}x=1$. Fix $h\in\mathbb Z$. If
$\{k\in I:h\geq\alpha_1k+\log_2\delta_1\}$ is nonempty, let
\[
 \kappa=\max\{k\in I:h\geq\alpha_1k+\log_2\delta_1\},
 \qquad t=2^{\alpha\kappa}\delta.
\]
 Then
\begin{equation*} 
 \sum_{\substack{k\in I\\ h\geq\alpha_1k+\log_2\delta_1}}f_k
       =\Psi_t*\Big(\sum_{k\in I}f_k\Big).
\end{equation*}
If the selected set is empty, its sum is zero. Then we have 
$$
\bigg\| \sup_{h\in\mathbb Z} \bigg|  \sum_{\substack{k \in I \\  h\geq \alpha_1 k +\log_2\delta_1}} f_k \bigg|\bigg\|_{p}\leq
\bigg\|\sup_{t>0} \big| \Psi_{t} *\big(\sum_{k\in I} f_k\big)\big| \bigg\|_p \lesssim \bigg\|\sum_{k\in I} f_k \bigg\|_{H^p}\,,
$$
from which the desired estimate \eqref{MaxRS-1} then follows from the proof of Lemma \ref{RSFE}.  

For infinite $I$, applying the above estimate to $I\cap[-N,N]$ and then using Fatou’s lemma yields the desired bound.
\end{proof}



\section{Single-scale estimates} \label{single}

In this section, we study a single-scale estimate that helps us understand the paraproduct when the underlying index set is a degenerate well-distributed set.   Define 
\begin{equation*}
 T_{ \alpha_1, \dots, \alpha_n, k}(f_1, \dots, f_n)(x)= \int f_1(x-\alpha_1 t^{m_1}) \cdots f_n(x-\alpha_n t^{m_n}) 2^k \rho(2^k t) \,\mathrm{d}t\, ,
\end{equation*}
where $k\in\mathbb{Z}$, $\alpha_1,\dots,\alpha_n\in \mathbb{R}$ and $\rho$ is a smooth function supported in $\{t\in\mathbb R: |t|\sim 1 \}$. 
For $p_1, \dots, p_n\geq 1$, recall that $$
 \frac{1}{r} = \frac{1}{p_1} +\dots +\frac{1}{p_n}\,.
$$
We aim to establish the following uniform $(p_1, \dots, p_n, r)$-estimate
\begin{equation}\label{de}
 \big\| T_{ \alpha_1, \dots, \alpha_n, k}(f_1, \dots, f_n)\big\|_{r}\leq C \|f_1\|_{p_1}\cdots\|f_n\|_{p_n}\,,
\end{equation}
where the constant $C$ is independent of $k$, $f_j$, and $\alpha_j$.  
It is easy to get the desired estimates for $r\geq 1$. In fact, we have

\begin{lemma}\label{r>1}
The uniform estimate \eqref{de} holds when $r\geq 1$. 
\end{lemma}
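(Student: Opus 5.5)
The case $r\geq 1$ follows from Minkowski's integral inequality and Hölder's inequality in $x$ for each fixed $t$; the bound is uniform because the measure $2^k|\rho(2^kt)|\,\mathrm{d}t$ has total mass $\|\rho\|_1$ independent of $k$, and translations preserve every $L^{p}$ norm.

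Since $r\geq 1$, the $L^r$ norm is a norm. We move it inside the $t$-integral by Minkowski's integral inequality:
\[
\big\| T_{\alpha_1,\dots,\alpha_n,k}(f_1,\dots,f_n)\big\|_r\leq \int \Big\| \prod_{j=1}^n f_j(\cdot-\alpha_j t^{m_j})\Big\|_r \, 2^k|\rho(2^kt)|\,\mathrm{d}t .
\]
For each fixed $t$, we apply Hölder's inequality with the exponents $p_1,\dots,p_n$. These satisfy $1/r=\sum_j 1/p_j$, and any $p_j=\infty$ is allowed. This gives
\[
\Big\|\prod_j f_j(\cdot-\alpha_jt^{m_j})\Big\|_r\leq \prod_j \|f_j(\cdot-\alpha_jt^{m_j})\|_{p_j}=\prod_j\|f_j\|_{p_j},
\]
because each factor is only a translate of $f_j$ and Lebesgue measure is translation invariant. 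This holds for every real $\alpha_j$ and every $t$.

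It remains to integrate in $t$. Substituting $s=2^kt$ gives $\int 2^k|\rho(2^kt)|\,\mathrm{d}t=\|\rho\|_{L^1}$, which depends neither on $k$ nor on the $\alpha_j$. Hence \eqref{de} holds with $C=\|\rho\|_1$, which is uniform in $k$, $f_j$ and $\alpha_j$.

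The only step needing care is the justification of Minkowski's inequality. For Schwartz $f_j$ the integrand is jointly measurable and the integrals are finite, so the inequality applies; general $f_j$ then follow by density. The assumption $r\geq1$ is used only here: for $r<1$ the map $\|\cdot\|_r$ is not a norm, and Minkowski's integral inequality fails.
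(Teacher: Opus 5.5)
Your proof is correct and follows the paper's argument: Minkowski's integral inequality moves the $L^r$ norm inside the $t$-integral, Hölder's inequality and translation invariance handle each fixed $t$, and $\int 2^k|\rho(2^kt)|\,\mathrm{d}t=\|\rho\|_{1}$ gives a bound uniform in $k$ and the $\alpha_j$. One small remark: the density step is unnecessary, and it would not be available anyway when some $p_j=\infty$, since Schwartz functions are not dense in $L^\infty$; Minkowski's integral inequality already applies directly to the nonnegative measurable integrand $\big|\prod_j f_j(x-\alpha_j t^{m_j})\big|$.
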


\begin{proof}
When $r\geq 1$, we apply Minkowski's inequality, which essentially is the triangle inequality in $L^p$, to obtain 
$$
\big\| T_{\alpha_1, \dots, \alpha_n, k}(f_1, \dots, f_n)\big\|_{r}\leq \int \big\| f_1(x-\alpha_1t^{m_1})\cdots f_n(x-\alpha_n t^{m_n})\big\|_{L^r_x} 2^k \big| \rho(2^k t) \big| \,\mathrm{d}t\,.
$$
By using H\"older's inequality,  the $L^r$-norm in the integrand can be dominated by 
$$
  \big\| f_1(x-\alpha_1t^{m_1})\cdots f_n(x-\alpha_n t^{m_n})\big\|_{L^r_x} \leq \|f_1\|_{p_1}\cdots\|f_n\|_{p_n}\,.
 $$
This completes the proof since $\rho\in L^1$. 
\end{proof}

The main difficulty arises when $r<1$.  We now state our results for the sharp range of quasi-Banach exponents.

\begin{theorem}\label{thm1/2}
Suppose that $m_1, \dots, m_n $ are distinct positive integers, and none of $\alpha_1, \dots, \alpha_n$ is zero.  
Then, for $n\geq 2$, the uniform estimate \eqref{de} holds for all $r>1/2$ and $p_1, \dots, p_n\geq 1$. 
\end{theorem}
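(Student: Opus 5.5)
The plan is to reduce \eqref{de} to a family of local bilinear estimates and then recombine them. The first step is normalization. We may take $k=0$ by the change of variables $t\mapsto 2^{-k}t$, which only replaces $\alpha_j$ by $2^{-m_jk}\alpha_j$; since the bound must be uniform in the $\alpha_j$, this is harmless. By symmetry we may also restrict to $t\in[1/2,2]$, the case $t<0$ being identical after replacing $\alpha_j$ by $(-1)^{m_j}\alpha_j$. The case $r\geq 1$ is Lemma \ref{r>1}, so assume $1/2<r<1$. Since $T$ is dominated by the positive operator $\int\prod_j|f_j(x-\alpha_jt^{m_j})|\rho(t)\,\mathrm{d}t$, we may take all $f_j\geq 0$.

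The constraint $\sum_j 1/p_j=1/r<2$ with every $1/p_j\in[0,1]$ places $(1/p_1,\dots,1/p_n)$ in the interior side of the polytope whose vertices are $0$, the $e_i$, and the $e_i+e_{i'}$. Vertices involving at most one nonzero coordinate give $r\geq 1$ and are covered by Lemma \ref{r>1}. So the heart of the matter is the pairs $(i,i')$: bounds near the vertex $e_i+e_{i'}$, that is $f_i\in L^{p}$, $f_{i'}\in L^{q}$ with $1/p+1/q$ slightly less than $2$, and all other $f_j\in L^\infty$. Once uniform restricted weak-type bounds hold near every vertex, I would invoke multilinear real interpolation for positive operators into quasi-Banach $L^r$ (Grafakos--Kalton type) to obtain every point with $r>1/2$ and $p_j\geq 1$. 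The case $p_j=1$ is then handled by the same bilinear argument directly, since the estimates below lose nothing at the endpoint $p=1$ as long as $r>1/2$.

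For the bilinear model $B(f,g)(x)=\int f(x-\alpha t^{a})g(x-\beta t^{b})\rho(t)\,\mathrm{d}t$ with $a\neq b$, I would first rescale $x$ and $t$ to make $|\alpha|=|\beta|=1$; two parameters suffice to normalize two coefficients. Then:
\begin{itemize}
\item \emph{Localization.} For $x$ in a unit interval $I$, $B(f,g)(x)$ depends only on $f,g$ restricted to $CI$.
\item \emph{Local estimate.} Prove $\|B(f,g)\|_{L^r(I)}\lesssim\|f\|_{L^p(CI)}\|g\|_{L^q(CI)}$.
\item \emph{Summation.} Sum over $I$ using Hölder in $\ell^{p/r}\times\ell^{q/r}$, which is exactly where $r/p+r/q=1$ is used, to get the global bound.
\end{itemize}
For the local estimate, the map $(x,t)\mapsto(x-\pm t^a,\,x-\pm t^b)$ has Jacobian $|a t^{a-1}\mp b t^{b-1}|$, which is bounded below except near at most one critical point $t_0\in[1/2,2]$, where it vanishes to finite order $s$. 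Away from $t_0$, the change of variables gives $L^1\times L^1\to L^1(\mathrm{d}x\,\mathrm{d}t)$ locally, and Hölder on $I$ (finite measure) yields $L^1\times L^1\to L^{1/2}(I)$. Near $t_0$ I would decompose $|t-t_0|\sim 2^{-l}$. The $L^1\times L^1\to L^{1/2}$ bound then loses $2^{ls}$ through the Jacobian, whereas the $L^1\times L^\infty\to L^1$, $L^\infty\times L^1\to L^1$ and $L^\infty\times L^\infty\to L^\infty$ bounds gain $2^{-l}$ from the size of the $t$-interval. Interpolating between these gives geometric decay in $l$ for every $r>1/2$, and hence a summable bound.

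The main obstacle I expect is uniformity in the remaining $\alpha_j$, $j\neq i,i'$, when the interpolation is carried out in the full $n$-linear setting. At the vertex $e_i+e_{i'}$ those functions are in $L^\infty$ and their coefficients are irrelevant, so that vertex is harmless. Near, but not at, the vertex, however, the other $f_j$ carry small positive $1/p_j$, and their natural localization scale $|\alpha_j|$ may differ wildly from the unit scale fixed by the pair. My first attempt would be to perform the interpolation in restricted weak-type form with characteristic functions $f_j=\Id_{E_j}$. The ancillary factors would be bounded pointwise by $1$ and by their averages over the curve segments: for $j\notin\{i,i'\}$ the estimate $\int\Id_{E_j}(x-\alpha_jt^{m_j})\,\mathrm{d}t$, integrated in $x$, costs only $|E_j|$ with no dependence on $\alpha_j$. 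Exploiting that, rather than spatial localization, should let the vertex bounds be combined with constants independent of all the $\alpha_j$.
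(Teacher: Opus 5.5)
\textbf{Gap: the local estimate near the critical point $t_0$.} This is the heart of your argument, and as set up it does not give decay for $r$ close to $1/2$.

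With $x$ localized only to unit intervals, your $L^1\times L^1\to L^{1/2}(I)$ bound for the piece $|t-t_0|\sim 2^{-l}$ carries the factor $2^{ls}=2^{l}$. Here $s=1$, since the zero of the Jacobian is simple. The other three bounds carry $2^{-l}$. At a target $(1/p,1/q,1/r)$ with $1/r=1/p+1/q$, every convex representation puts weight $w\geq 1/p+1/q-1=1/r-1$ on the vertex $(1,1,2)$. Interpolation therefore produces $2^{l(2w-1)}$. This decays only if $w<1/2$, so at best for $r>2/3$. Near the vertex $e_i+e_{i'}$, which is exactly where your scheme needs it, it grows like nearly $2^{l}$. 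The claimed geometric decay for every $r>1/2$ does not follow.

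\textbf{The missing idea is to localize at the scale of the piece.} For $t$ in an interval of length $O(2^{-l})$, the shifts $\beta t^{a}$ and $t^{b}$ vary by only $O(2^{-l})$. After translating by their values at a fixed $t_l$, you may restrict $x$ and both inputs to intervals $J$ of length $O(2^{-l})$. H\"older on such a $J$ gives $\|F\|_{L^{1/2}(J)}\leq |J|\,\|F\|_{L^1}$. The factor $|J|\sim 2^{-l}$ cancels the Jacobian loss exactly, so the $L^1\times L^1\to L^{1/2}(J)$ bound becomes $O(1)$, while the other three bounds still give $2^{-l}$.

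Interpolation then yields $2^{-l(1-w)}$ for every $w<1$, which covers all $r>1/2$. In particular $L^1\times L^{r/(1-r)}\to L^r$ holds with decay $2^{-l(2-1/r)}$. The pieces are reassembled over a partition into intervals of length $2^{-l}$ by the same H\"older-in-$\ell$ summation you describe (using $r/p+r/q=1$), followed by the $r$-triangle inequality in $l$. This is precisely the mechanism of the paper's Lemma~\ref{bil-lem}. It genuinely uses simplicity of the zero: a zero of order $s>1$ would still leave a loss $2^{l(s-1)}$ after the refined localization.

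\textbf{Two smaller points.}

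First, your normalization $|\alpha|=|\beta|=1$ uses the $t$-dilation. That moves the support of $\rho$ to $|t|\sim(|\alpha|/|\beta|)^{1/(a-b)}$, so the later Jacobian analysis on $[1/2,2]$ and the unit-scale localization are no longer uniform. The paper dilates only $x$: it makes the larger coefficient $1$ and the other of modulus at most $1$, keeping $|t|\sim 1$.

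Second, the obstacle you flag about the ancillary coefficients does not require your averaging idea. The paper works directly with restricted type at each point. It chooses the two smallest sets, $|E_{i_1}|\leq|E_{i_2}|\leq|E_j|$ for all other $j$, and bounds the remaining indicators by $1$. The bilinear $L^1\times L^{r/(1-r)}\to L^r$ bound then gives $|E_{i_1}|\,|E_{i_2}|^{1/r-1}$. This is at most $\prod_j|E_j|^{1/p_j}$ precisely because of the ordering of the set sizes, so no interpolation with positive ancillary exponents ever enters.
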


\subsection{Technical lemmas}

\begin{lemma}\label{bil-lem}
Let $\beta\in[-1,1]$, let $m_1, m_2$ be distinct positive integers, and let $\rho$ be a smooth function supported on $[1/2, 1]\cup [-1, -1/2]$. Define 
\begin{equation*}
T_{ 0, \beta}(f_1, f_2)(x)= \int f_1(x-\beta t^{m_1}) f_2(x-t^{m_2}) \rho(t) \,\mathrm{d}t \,.
\end{equation*}
Then for $1/2< r<1$,  we have the following uniform estimate
\begin{equation}\label{estT0be}
\big\| T_{0, \beta}(f_1, f_2)\big\|_r \leq C\big \|f_1\big\|_1\big\|f_{2}\big\|_{\frac{r}{1-r}}\,,
\end{equation}
where $C$ is independent of $\beta$, but may depend on $m_1$ and $m_2$. 
 \end{lemma}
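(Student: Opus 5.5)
The plan is to reduce the estimate to a local one on unit intervals, then sum the local bounds with H\"older's inequality in the exponents $1/r$ and $q/r$, where $q=\frac{r}{1-r}$. Since $r\in(1/2,1)$, we have $q\in(1,\infty)$, and $\frac1r=1+\frac1q$. Because $t\in\operatorname{supp}\rho\subset\{1/2\leq|t|\leq 1\}$ and $|\beta|\leq 1$, all translations $\beta t^{m_1}$ and $t^{m_2}$ have size at most $1$. Hence, for a unit interval $I$, the restriction of $T_{0,\beta}(f_1,f_2)$ to $I$ depends only on $f_1\Id_{3I}$ and $f_2\Id_{3I}$. Tiling $\mathbb R$ by unit intervals $I$ gives
\begin{equation*}
\big\|T_{0,\beta}(f_1,f_2)\big\|_r^r=\sum_I \big\|T_{0,\beta}(f_1\Id_{3I},f_2\Id_{3I})\big\|_{L^r(I)}^r .
\end{equation*}
Suppose we know the local estimate $\|T_{0,\beta}(g_1,g_2)\|_{L^r(I)}\leq C\|g_1\|_1\|g_2\|_q$ for $g_1,g_2$ supported in $3I$. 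H\"older's inequality in $\ell$ with exponents $1/r$ and $1/(1-r)=q/r$ then gives
\begin{equation*}
\sum_I \|f_1\|_{L^1(3I)}^r\|f_2\|_{L^q(3I)}^r\leq \Big(\sum_I\|f_1\|_{L^1(3I)}\Big)^r\Big(\sum_I\|f_2\|^q_{L^q(3I)}\Big)^{r/q}\lesssim \|f_1\|_1^r\|f_2\|_q^r ,
\end{equation*}
which is \eqref{estT0be}.

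For the local estimate, the natural first attempt is to write $g_1=\int g_1(y)\,\delta_y\,\mathrm{d}y$, pass to $L^1(I)$ using $|I|=1$, and apply Minkowski's inequality. This reduces matters to bounding $\int |g_2(y+\varphi_\beta(t))|\,|\rho(t)|\,\mathrm{d}t$ uniformly in $y$ and $\beta$, where $\varphi_\beta(t)=\beta t^{m_1}-t^{m_2}$. By H\"older's inequality, that quantity is controlled by $\|g_2\|_q$ times the $L^{q'}$ norm of the density of the pushforward of $|\rho(t)|\,\mathrm{d}t$ under $\varphi_\beta$. Here $\varphi_\beta'(t)=m_1\beta t^{m_1-1}-m_2t^{m_2-1}$ is a polynomial of degree at most $D=\max\{m_1,m_2\}-1$. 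Its coefficient $m_2$ in front of $t^{m_2-1}$ cannot cancel against the other term because $m_1\neq m_2$. A compactness and sublevel-set argument in the finite-dimensional space of such polynomials therefore gives $|\{t\in[1/2,1]:|\varphi_\beta'(t)|<\lambda\}|\lesssim\lambda^{1/D}$ uniformly in $\beta\in[-1,1]$. I would split the $t$-range into $O(D)$ monotonicity intervals, and further into dyadic pieces where $|\varphi_\beta'|\sim 2^{-l}$; each such piece has measure $\lesssim 2^{-l/D}$. This settles the range of $r$ close to $1$, where $q'$ is small.

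The main obstacle is the range $r\to 1/2$. There $q'=\frac{r}{2r-1}\to\infty$, and the pushforward density near a degenerate critical point of $\varphi_\beta$ is only in $L^s$ for $s$ below some threshold, so the pure $L^1(I)$ approach loses. To get around this I would keep the $L^r$ quasi-norm on each dyadic piece $T_l$, where $|\varphi'_\beta|\sim 2^{-l}$, and exploit small supports. For a point mass $g_1=\delta_y$, $T_l$ is supported in $y+\beta\cdot E_l$, with $|E_l|\lesssim 2^{-l/D}$, so $\|\cdot\|_{L^r}\leq|\mathrm{supp}|^{1/r-1}\|\cdot\|_{L^1}$ yields a gain $2^{-l(1/r-1)/D}$. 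The key step is to make this work for general $g_1\in L^1$ without a Minkowski inequality in $L^r$. I would first try to decompose $g_1$ at the scale $|\beta|2^{-l/D}$ into pieces supported on intervals of that length, and use the $r$-triangle inequality over this discrete family rather than over the continuum. On each piece I would use the $L^1$ bound with the small-support gain, which should produce a summable factor in $l$ against the loss $2^{l/q}2^{-l/(Dq')}$. Where $|\beta|$ is too small for this to help, I would exploit instead that $t\mapsto t^{m_2}$ is nondegenerate on $[1/2,1]$. In that regime $T_{0,\beta}$ is essentially a small perturbation of $g_1*(\text{pushforward of }\rho\text{ under } t\mapsto t^{m_2}\text{ composed with } g_2)$, which is easy to bound. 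Balancing these two regimes uniformly in $\beta$ is where I expect the real work to lie.
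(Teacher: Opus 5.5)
\textbf{There is a genuine gap.} Your outer reduction is correct and is essentially the paper's localization step: tile by unit intervals, then apply H\"older in $\ell$ with exponents $1/r$ and $q/r$. Your pushforward argument in $L^1(I)$ also does handle $r$ away from $1/2$. The gap is the range near $r=1/2$, which you only sketch, and the sketch fails at two points.

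First, the summation over pieces cancels the gain. Suppose you split $g_1$ into $N\sim \ell^{-1}$ pieces of length $\ell$ and apply the $r$-triangle inequality, keeping the global $\|g_2\|_q$ on each piece. Then you lose $\big(\sum_Q\|g_1\|_{L^1(Q)}^r\big)^{1/r}\le N^{1/r-1}\|g_1\|_1\sim \ell^{-1/q}\|g_1\|_1$. This exactly cancels the small-support gain $\ell^{1/r-1}=\ell^{1/q}$. To avoid this, you must recenter $t$ at a point of the piece and localize $x$, $g_1$ and $g_2$ simultaneously at the scale of the $t$-piece. You then sum with H\"older in $\ell$, as in your outer step.

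Second, even with that repaired, your own balance is $2^{-l/(Dq)}\cdot 2^{l/q}2^{-l/(Dq')}=2^{l/q-l/D}$. This is summable only for $q>D$, i.e.\ $r>D/(D+1)$. So with the sublevel bound $\lambda^{1/D}$ you never get near $r=1/2$ once $D\geq 2$.

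The missing idea, which the paper uses, is the exact form of the Jacobian: $\varphi_\beta'(t)=t^{m_2-1}\big(\beta m_1 t^{m_1-m_2}-m_2\big)$, where the second factor is strictly monotone on each half of $\operatorname{supp}\rho$. This gives two cases.
\begin{itemize}
\item If $|\beta|\le c_{m_1,m_2}$, then $|\varphi_\beta'|\gtrsim 1$ on $\operatorname{supp}\rho$, and the trivial $L^1\times L^1\to L^1$ bound finishes. This is your small-$|\beta|$ regime, and it needs no perturbation argument.
\item If $|\beta|> c_{m_1,m_2}$, the second factor has derivative $\gtrsim 1$. So $\varphi_\beta'$ has at most one zero $t_0$ per half, and it is nondegenerate: $\varphi_\beta''(t_0)=m_2(m_1-m_2)t_0^{m_2-2}\neq 0$.
\end{itemize}
Thus there are no degenerate critical points. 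What blocks your $L^1(I)$ argument for $r\le 2/3$ is this nondegenerate critical point, where the pushforward density behaves like $|v-v_0|^{-1/2}$. Moreover, $\{t:|\varphi_\beta'(t)|\sim 2^{-l}\}$ lies in an interval of length $O(2^{-l})$. The balance then becomes $2^{l/q}2^{-l}=2^{-l(2-1/r)}$, which is summable for every $r>1/2$.

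The paper runs the piece estimate in three steps:
\begin{itemize}
\item the change of variables $(x,t)\mapsto(x-\beta t^{m_1},x-t^{m_2})$ gives $L^1\times L^1\to L^1$ with norm $\lesssim 2^{l}$;
\item Cauchy--Schwarz on a cell of length $2^{-l}$ then gives an $O(1)$ bound into $L^{1/2}$;
\item interpolating with the trivial $O(2^{-l})$ bound yields the same decay.
\end{itemize}
Since this scale does not depend on $\beta$, no balancing between $\beta$-regimes is needed.
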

 
\begin{proof}
By splitting the integral into the regions \(t>0\) and \(t<0\), it suffices to consider kernels supported in either \([1/2,1]\) or \([-1,-1/2]\). We fix one of these two cases throughout the proof.

Since $|t|\sim 1$ and $|\beta|\leq 1$, a localization argument allows us to reduce \eqref{estT0be} to
\begin{equation}\label{estT0be1}
\big\| T_{0, \beta}(f_1, f_2)\big\|_{L^r([0, 1])}\leq C \|f_1\|_1\big\|f_{2}\big\|_{\frac{r}{1-r}}\,.
\end{equation} 

Let 
$$
 u= x-\beta t^{m_1}\,\,\, {\rm and}\,\,\, v= x-t^{m_2}\,.  
$$
Then the Jacobian is equal to 
$$
\frac{\partial (u, v)}{\partial (x, t)}= t^{m_2-1} \big( \beta m_1 t^{m_1-m_2} -m_2\big)\,.
$$
Since $|t|\sim 1$,  if 
\begin{equation}\label{cond-beta}
|\beta|\notin \bigg[ \frac{m_2}{2m_1}2^{-|m_1-m_2|}, \frac{2m_2}{m_1}2^{|m_1-m_2|}\bigg]\,,
\end{equation}
then  
$$
 \bigg| \frac{\partial (u, v)}{\partial (x, t)}\bigg| \geq C_{m_1, m_2} >0\,,
 $$
for some constant $C_{m_1, m_2}>0$ depending on $m_1, m_2$. Hence, when $\beta$ satisfies \eqref{cond-beta}, 
we obtain 
\begin{equation*}
\big\| T_{0, \beta}(f_1, f_2)\big\|_1\leq 
\iint \big| f_1(u) f_2(v) \big| \bigg| \frac{\partial (u, v)}{\partial (x, t)}\bigg|^{-1}
\mathrm{d}u\mathrm{d}v \lesssim_{m_1, m_2} \|f_1\|_1\|f_2\|_1\,.
\end{equation*}

Applying H\"older's inequality and then the last inequality, we get 
$$
\big\| T_{0, \beta}(f_1, f_2)\big\|_{L^r([0, 1])}\leq \big\| T_{0, \beta}(f_1, f_2)\big\|_1\lesssim
\|f_1\|_1\|f_2\|_{L^1([-2^{m_2}, 2^{m_2}])} \lesssim \|f_1\|_1\big\|f_{2}\big\|_{\frac{r}{1-r}}\,,$$
as desired.  

When $\beta$ does not satisfy \eqref{cond-beta}, we shall decompose the range of $t$ according to the size of the 
Jacobian.  For any $j\in \mathbb Z$, we define 
\begin{equation*}
 I_j = \bigg\{ t\in {\rm supp}\rho:   2^{-j-1}\leq \bigg| \frac{\partial (u, v)}{\partial (x, t)}\bigg|\leq 2^{-j}  \bigg\}\,.
\end{equation*}
Since $|\beta|\leq 1$, we see that there exists a constant $c_{m_1, m_2}$ such that 
$I_j =\emptyset$ whenever $j \leq c_{m_1, m_2}$.  Thus we only need to focus on those $j\geq c_{m_1, m_2}$. 
Moreover, it is easy to see that when $\beta$ does not satisfy \eqref{cond-beta},  $I_j$ is contained in an interval of length at most 
$
C_{m_1, m_2} 2^{-j} 
$.
Note that 
\begin{equation*}
T_{ 0, \beta}(f_1, f_2)(x)=\sum_{j\geq c_{m_1, m_2}} T_{0, \beta, j}(f_1, f_2) (x)\,,
\end{equation*}
where
$$
T_{0, \beta, j}(f_1, f_2) (x)= \int f_1(x-\beta t^{m_1}) f_2(x-t^{m_2}) \rho(t)\Id_{I_j}(t) \,\mathrm{d}t \,.
 $$
We aim to  show that 
\begin{equation}\label{est-Tj}
\big\| T_{0, \beta, j}(f_1, f_2)\big\|_{L^r([0, 1])} \lesssim 2^{-\e j} \|f_1\|_1\big\|f_{2}\big\|_{\frac{r}{1-r}}\,,
\end{equation}
where $\e$ is a positive number depending on $r$.  It is clear that \eqref{estT0be1} follows from 
\eqref{est-Tj} because,  for $r<1$, it is easy to see that 
\begin{equation*}
\big\| T_{0, \beta}(f_1, f_2)\big\|_{L^r([0, 1])} 
\leq \bigg( \sum_{j\geq c_{m_1, m_2}} \big\| T_{0, \beta, j}(f_1, f_2)\big\|_{L^r([0,1])}^r\bigg)^{1/r}
\,.
\end{equation*}

Finally we turn to the proof of \eqref{est-Tj}.  
Let $t_j$ be a point in $I_j$.  Set
$$
f_{1, j}(x)=f_1(x-\beta t_j^{m_1})\,\, \, {\rm and}\,\,\, f_{2, j}(x)= f_2(x-t_j^{m_2})\,.
$$
Then 
$$
T_{0, \beta, j}(f_1, f_2) (x)= \int f_{1, j}(x+\beta t_j^{m_1}-\beta t^{m_1}) f_{2,j}(x+t_j^{m_2}-t^{m_2}) \rho(t)\Id_{I_j}(t) \,\mathrm{d}t \,.
$$
Notice that the translations of $f_{1, j}$ and $f_{2, j}$ preserve their $L^p$ norms, and that these functions remain supported in a compact set around the origin  when $x\in [0,1]$ and $t\in {\rm supp} \rho$.  Observe that, for $t\in I_j$, 
both $\beta t_j^{m_1}-\beta t^{m_1} $ and $ t_j^{m_2}-t^{m_2}$ are $O(2^{-j})$. By a localization argument, \eqref{est-Tj}
can be reduced to 
\begin{equation}\label{est-Tj-1}
\big\| T_{\beta,j}(f_1, f_{2})\big\|_{L^r(J_j)} \lesssim 2^{-\e j} \|f_1\|_1\big\|f_{2}\big\|_{\frac{r}{1-r}}\,,
\end{equation}
 where $J_j$ is an interval of length $O(2^{-j})$ and
 $$
  T_{\beta,j}(f_1, f_{2})(x)=  \int f_{1}(x+\beta t_j^{m_1}-\beta t^{m_1}) f_{2}(x+t_j^{m_2}-t^{m_2}) \rho(t)\Id_{I_j}(t) \,\mathrm{d}t \,.   $$
 By inserting absolute values throughout and applying interpolation, we see that $T_{\beta, j}$ maps $L^p\times L^q$ to $L^s$ with 
  a uniform bound $C2^{-j}$, whenever $(1/p, 1/q,1/s)$ belongs to the closed convex hull $\mathbb H$ of the points 
  $(1, 0,1)$, $(0,1,1)$, and $(0,0,0)$. That is, for any $(1/p, 1/q, 1/s)\in \mathbb H$, we have 
  \begin{equation}\label{pqs}
\big\| T_{\beta,j}(f_1, f_{2})\big\|_{L^{s}(J_j)}\lesssim 2^{-j}\|f_1\|_p\|f_2\|_q\,.
\end{equation}

 On the other hand, by H\"older's inequality, we have
 $$
 \big\| T_{\beta,j}(f_1, f_{2})\big\|_{L^{1/2}(J_j)}\leq |J_j|  \big\| T_{\beta,j}(f_1, f_{2})\big\|_{1}  
 \lesssim 2^{-j}  \big\| T_{\beta,j}(f_1, f_{2})\big\|_{1}  \,,$$  
 which is bounded by
 $$
2^{-j} 2^{j} \|f_1\|_1\|f_2\|_1= \|f_1\|_1\|f_2\|_1\,.  $$ 
This yields 
\begin{equation}\label{1/2}
\big\| T_{\beta,j}(f_1, f_{2})\big\|_{L^{1/2}(J_j)}\lesssim \|f_1\|_1\|f_2\|_1.
\end{equation}
Now \eqref{est-Tj-1} follows by interpolation between \eqref{pqs} and \eqref{1/2}.  This  completes the proof of Lemma \ref{bil-lem}.  
\end{proof}

\begin{lemma}\label{inf-bil}
Suppose that $m_1, \dots, m_n $ are distinct  positive integers, and none of $\alpha_1,  \dots, \alpha_n$ is zero.  
For $r\in (1/2, 1)$, 
\begin{equation}\label{T0-r}
\big\| T_{\alpha_1, \dots, \alpha_n, 0}(f_1, \dots, f_n)\big\|_{r} \leq C \|f_1\|_1\|f_2\|_{\frac{r}{1-r}}\prod_{j=3}^n \|f_j\|_\infty\,
\end{equation}
holds for any $f_1\in L^1, f_2\in L^{\frac{r}{1-r}}$, $f_3, \dots, f_n\in L^\infty$. Here the constant $C$ 
is independent of the coefficients $\alpha_j$.  
\end{lemma}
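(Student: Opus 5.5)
The plan is to reduce \eqref{T0-r} to the bilinear estimate of Lemma \ref{bil-lem}. First, since $f_3,\dots,f_n\in L^\infty$, inserting absolute values gives the pointwise bound
\[
\big|T_{\alpha_1,\dots,\alpha_n,0}(f_1,\dots,f_n)(x)\big|\leq \prod_{j=3}^n\|f_j\|_\infty \int |f_1(x-\alpha_1 t^{m_1})|\,|f_2(x-\alpha_2 t^{m_2})|\,|\rho(t)|\,\mathrm{d}t .
\]
So it suffices to prove the bilinear bound
\[
\Big\|\int |f_1(x-\alpha_1 t^{m_1})|\,|f_2(x-\alpha_2t^{m_2})|\,|\rho(t)|\,\mathrm{d}t\Big\|_r\lesssim \|f_1\|_1\|f_2\|_{\frac{r}{1-r}},
\]
uniformly in the nonzero coefficients $\alpha_1,\alpha_2$. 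Here $\rho$ is supported where $|t|\sim 1$. We may replace $|\rho|$ by a smooth nonnegative bump $\tilde\rho$ that dominates it and is supported in $[1/2,1]\cup[-1,-1/2]$ (after a harmless rescaling of $t$, which changes $\alpha_j$ by bounded factors). Lemma \ref{bil-lem} only uses $|\rho|$ through absolute values in its proof, so this is permissible.

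Next we normalize the coefficients by dilation in $x$. Write $f_2^{\lambda}(x)=f_2(\lambda x)$ and similarly for $f_1$, and change variables $x=\alpha_2 y$. The operator becomes
\[
\int |f_1(\alpha_2(y-\beta t^{m_1}))|\,|f_2(\alpha_2(y-t^{m_2}))|\,\tilde\rho(t)\,\mathrm{d}t,\qquad \beta=\alpha_1/\alpha_2 .
\]
Scaling is consistent with the exponents. The $L^r$ norm in $x$ produces a factor $|\alpha_2|^{1/r}$. The functions $g_j=f_j(\alpha_2\,\cdot)$ satisfy $\|g_1\|_1=|\alpha_2|^{-1}\|f_1\|_1$ and $\|g_2\|_{r/(1-r)}=|\alpha_2|^{-(1-r)/r}\|f_2\|_{r/(1-r)}$. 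Since $1/r=1+(1-r)/r$, all powers of $|\alpha_2|$ cancel.

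If $|\beta|\leq1$, Lemma \ref{bil-lem} applies directly with $m_1,m_2$ and gives a bound independent of $\beta$. If $|\beta|>1$, we normalize instead by $\alpha_1$, that is, set $x=\alpha_1 y$. The roles of the two functions then swap: the $L^1$ function $f_1$ sits on the unit-coefficient term $y-t^{m_1}$, and $f_2$ sits on $y-\beta' t^{m_2}$ with $\beta'=\alpha_2/\alpha_1\in[-1,1]$.

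The main obstacle is exactly this second case, because Lemma \ref{bil-lem} as stated places the $L^1$ function on the $\beta$-term. The remedy is to rerun its proof with the roles exchanged. That proof is symmetric in the two functions at every step:
\begin{itemize}
\item the Jacobian dichotomy depends only on $m_1,m_2$ and $\beta$;
\item the nondegenerate case uses $\|f_1\|_1\|f_2\|_{L^1(\text{compact})}$;
\item the degenerate pieces $I_j$ are handled by interpolating between the bound $2^{-j}$ on the convex hull $\mathbb H$, which is symmetric in $(1/p,1/q)$, and the $L^{1/2}$ bound with $L^1\times L^1$.
\end{itemize}
Hence the same argument yields $\|T(f_2,f_1)\|_{L^r}\lesssim \|f_1\|_1\|f_2\|_{r/(1-r)}$ with $f_1$ on the unit term. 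The only subtlety is the localization to $x\in[0,1]$, with the $L^{r/(1-r)}$ function restricted to a compact set. This still works because, with $|\beta'|\leq1$ and $|t|\sim1$, both translations are $O(1)$. After undoing the dilation, the constant is uniform in $\alpha_1,\dots,\alpha_n$, which proves \eqref{T0-r}.
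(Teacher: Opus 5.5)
Your proposal is correct and follows the paper's proof. You discard $f_3,\dots,f_n$ via absolute values, dilate $x$ by $\alpha_2$ or $\alpha_1$ according to which coefficient is larger in modulus, and then apply Lemma~\ref{bil-lem}. Your explicit check that the proof of Lemma~\ref{bil-lem} is symmetric in the two inputs fills in exactly what the paper's Case~2 leaves implicit with ``the rest of the proof proceeds as before'', namely that there the $L^1$ function sits on the unit-coefficient term.
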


\begin{proof}
\underline{{\bf Case 1}: $|\alpha_1|\leq |\alpha_2|$}.   In this case, notice 
$$
T_{\alpha_1, \dots, \alpha_n,0}(f_1, \dots, f_n)(\alpha_2 x)= \int f_1(\alpha_2 x-\alpha_1 t^{m_1})  \cdots f_n(\alpha_2 x-\alpha_n t^{m_n})  \rho(t) \,\mathrm{d}t\,.
$$
Let 
$$
 f_{1, \alpha_2}(x)= |\alpha_2| f_1(\alpha_2 x) \,\,\, {\rm and} \,\,\, f_{2, \alpha_2}(x)=|\alpha_2|^{\frac{1-r}{r}}f_2(\alpha_2 x)\,.
$$
Clearly we have 
$$
 \big\|f_{1, \alpha_2}\big\|_1=\|f_1\|_1\,\,\, {\rm and}\,\,\, \big\|f_{2, \alpha_2}\big\|_{\frac{r}{1-r}}=\|f_2\|_{\frac{r}{1-r}}\,. 
$$
Moreover, we get 
$$
\big\| T_{ \alpha_1, \dots, \alpha_n, 0}(f_1, \dots, f_n)\big\|_{r}= \big\|T_0^{(\alpha_2)}(f_{1, \alpha_2}, f_{2, \alpha_2}, f_3, \dots, f_n)\big\|_{r}\,,
$$
where $T_0^{(\alpha_2)}(f_{1, \alpha_2}, f_{2, \alpha_2}, f_3, \dots, f_n)(x)$ is defined as
\begin{equation*}
 \int f_{1, \alpha_2}\big( x-\frac{\alpha_1}{\alpha_2} t^{m_1}\big) f_{2, \alpha_2}(x-t^{m_2})
 f_3(\alpha_2 x-\alpha_3t^{m_3}) \cdots f_n(\alpha_2 x-\alpha_n t^{m_n})  \rho(t) \,\mathrm{d}t\,,
 \end{equation*}
whose absolute value is dominated by 
$$
\int \big| f_{1, \alpha_2}\big( x-\frac{\alpha_1}{\alpha_2} t^{m_1}\big) f_{2, \alpha_2}(x-t^{m_2})\big|
 \big|\rho(t)\big| \,\mathrm{d}t \prod_{j=3}^n \big\|f_j\big\|_\infty\,. 
 $$
Since $|\alpha_1/\alpha_2|\leq 1$, Lemma \ref{bil-lem} (whose proof also yields the same estimate with absolute values inserted in the integrand) implies immediately the desired conclusion \eqref{T0-r}.

\underline{\bf Case 2: $|\alpha_2| < |\alpha_1|$}. This case can be handled by first making the change of variables
\(x \mapsto \alpha_1 x\), as in the beginning of Case 1. The rest of the proof
proceeds as before, so we omit the details.

This completes the proof of Lemma \ref{inf-bil}.
\end{proof}

\subsection{Proof of Theorem \ref{thm1/2} }

Since $1/r=1/{p_1}+\dots +1/p_n$,  we can rescale the inequality so that it is sufficient to prove the following uniform estimate, 
\begin{equation}\label{de0}
 \big\| T_{\alpha_1, \dots, \alpha_n,0}(f_1, \dots, f_n)\big\|_{r}\leq C \|f_1\|_{p_1}\cdots\|f_n\|_{p_n}\,,
\end{equation}
where 
$$
T_{\alpha_1, \dots, \alpha_n,0}(f_1, \dots, f_n)(x)= \int f_1(x-\alpha_1 t^{m_1}) \cdots f_n(x-\alpha_n t^{m_n})  \rho(t) \,\mathrm{d}t\,.
$$
 
By the multilinear interpolation theorem,  we only need to verify 
$$
\big\| T_{ \alpha_1, \dots, \alpha_n,0}(\Id_{E_1}, \dots, \Id_{E_n})\big\|_{r}\lesssim \big|E_1\big|^{1/p_1}\cdots\big|E_n\big|^{1/p_n}\,. 
$$
Here we may assume that $r\in (1/2, 1)$ is near $1/2$ because the $r>1$ case has already been handled in Lemma \ref{r>1}.  

We choose two distinct sets $E_{i_1}$ and $E_{i_2}$ among given $E_1, \dots, E_n$ such that
$$
 |E_{i_1}|\leq |E_{i_2}| \leq \min\big\{|E_j|: j\in \{1, \dots, n\}\backslash \{i_1, i_2\} \big\}\,.
$$
We prove a slightly stronger inequality
\begin{equation*}
\big\| T_{ \alpha_1, \dots, \alpha_n,0}(\Id_{E_1}, \dots, \Id_{E_n})\big\|_{r} \lesssim \big| E_{i_1}\big|\big| E_{i_2}\big|^{\frac{1}{r}-1}\,.
\end{equation*}
To see why this inequality implies the desired estimate, observe that 
$$
 \big| E_{i_1}\big|^{1-1/p_{i_1}}  \big| E_{i_2}\big|^{\frac{1}{r}-1-1/p_{i_2}} \lesssim 
 \prod_{j\notin \{i_1, i_2\}} \big| E_j\big|^{1/p_j}\,.
 $$
Without loss of generality,  we may assume $i_1=1$ and $i_2=2$. 
It then suffices to show that 
\begin{equation*}
\big\| T_{ \alpha_1, \dots, \alpha_n,0}(\Id_{E_1}, \dots, \Id_{E_n})\big\|_{r} \lesssim \big| E_{1}\big|\big| E_{2}\big|^{\frac{1}{r}-1}\,,
\end{equation*}
which follows immediately from Lemma \ref{inf-bil}.  Therefore, the proof is complete. \qed


\section{Proof of Theorem \ref{thm}}\label{To-Para}

In this section, we present a proof of our main theorem on the multilinear Hilbert transforms, using uniform estimates for the paraproducts, as given in Theorem \ref{para}, and the Sobolev smoothing inequality in Proposition \ref{prop-0}.
It is more convenient to apply the following proposition, which follows from Proposition \ref{prop-0} and Theorem \ref{thm1/2}.

\begin{proposition}\label{prop1}
Let 
$$
\mathbb P_n=\left\{ \Big(\frac{1}{p_1}, \dots, \frac{1}{p_n}, \frac{1}{r}\Big): \frac{1}{r}=\frac{1}{p_1}+\dots +\frac{1}{p_n}, \,\,
  p_1, \dots, p_n>1, r>1/2
   \right\}\,. 
$$
Suppose that, for some $j\in\{1,\dots,n\}$, the Fourier transform
$\widehat{f_j}$ is supported in the dyadic annulus 
$\{\xi\in\mathbb{R} : 2^{m_j k}\delta\leq |\xi|\leq 2^{m_j k+1}\delta\}$ with $\delta\geq 1$.  Then 
for any $(1/p_1, \dots, 1/p_n, 1/r)\in\mathbb P_n$, 
there exists $c>0$ 
such that 
\begin{equation}\label{smo}
\big\| T_k(f_1, \dots, f_n)\big\|_r\lesssim \delta^{-c} \prod_{j=1}^n \|f_j\|_{p_j}\,. 
\end{equation}
\end{proposition}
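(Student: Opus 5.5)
The plan is to interpolate between Proposition~\ref{prop-0}, which gives decay $\delta^{-c}$ in the Banach range, and Theorem~\ref{thm1/2}, which gives a uniform bound without decay in the quasi-Banach range $r>1/2$.

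We first reduce by scaling. Replacing $x$ by $2^{-m_j k}$-type dilations is not available simultaneously for all factors. Instead we use the fact that $T_k$ is the operator $T_{\alpha_1,\dots,\alpha_n,k}$ of Section~\ref{single} with $\alpha_j=1$. Its bounds in Theorem~\ref{thm1/2} and Lemma~\ref{r>1} are uniform in $k$. So for every $(1/p_1,\dots,1/p_n,1/r)\in\mathbb P_n$ we have
$$\|T_k(f_1,\dots,f_n)\|_r\lesssim\prod_i\|f_i\|_{p_i},$$
with no decay. Proposition~\ref{prop-0} gives the decaying bound $\delta^{-c_0}$ at every tuple with all $p_i,r\in[1,\infty)$, that is, on the Banach part of $\mathbb P_n$.

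Now fix a target point $P=(1/p_1,\dots,1/p_n,1/r)\in\mathbb P_n$ with $r\le 1$; the case $r>1$ is Proposition~\ref{prop-0} directly. Since $\mathbb P_n$ is open in the hyperplane given by the H\"older relation, we can write $P=(1-\theta)P_0+\theta P_1$. Here $P_1$ lies in the Banach region (for instance all $1/p_i$ small with $1/r<1$, $p_i>1$), and $P_0\in\mathbb P_n$ has $r_0>1/2$ slightly closer to $1/2$, with $\theta\in(0,1]$. It is convenient to pick $P_1$ on the ray from $P_0$ through $P$ extended into the region $1/r<1$.

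The key technical point is that the frequency restriction on $f_j$ must be preserved under interpolation. We handle this by fixing the projection: write $f_j=P_{k,\delta}f_j$, where $P_{k,\delta}$ is a smooth Fourier multiplier equal to $1$ on the annulus and supported on a slightly larger annulus. The operator $(f_1,\dots,f_n)\mapsto T_k(f_1,\dots,P_{k,\delta}f_j,\dots,f_n)$ then satisfies both bounds for arbitrary inputs. $P_{k,\delta}$ is bounded on $L^{p}$ for $p>1$ uniformly in $k,\delta$. A slightly enlarged annulus is harmless in Proposition~\ref{prop-0}, since we may split it into $O(1)$ dyadic annuli with comparable $\delta$. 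Multilinear complex (or real) interpolation in the quasi-Banach target setting then yields the bound $\delta^{-\theta c_0}$ at $P$.

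I expect the interpolation itself to be the main obstacle, because the target $L^r$ with $r<1$ is quasi-Banach. To get around this, I would use the multilinear real interpolation method for restricted weak type estimates on characteristic functions already invoked in the proof of Theorem~\ref{thm1/2}: it applies to quasi-Banach targets and, since the set $\mathbb P_n$ is open, upgrades restricted weak type bounds at nearby points to strong bounds with the interpolated constant $\delta^{-\theta c_0}$. Alternatively, a direct Riesz--Thorin-type log-convexity argument for $r<1$ suffices after linearizing via the pointwise bound. Finally $c=\theta c_0>0$ depends on the tuple, as claimed.
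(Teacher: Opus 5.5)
Your proposal is correct and follows the paper's route: the paper obtains \eqref{smo} simply by ``standard interpolation'' between the uniform quasi-Banach bound of Theorem~\ref{thm1/2} and the decaying Banach-range bound of Proposition~\ref{prop-0}, which gives the constant $\delta^{-\theta c_0}$ exactly as in your argument. Your extra steps are the details the paper leaves implicit: inserting a smooth projection $P_{k,\delta}$ onto a slight enlargement of the annulus so that the frequency hypothesis survives interpolation, and handling the $L^r$ target with $r<1$ via log-convexity of restricted weak-type bounds followed by multilinear Marcinkiewicz interpolation on a neighborhood of the target point.
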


\begin{proof}
Recall that 
\begin{equation*}
T_k(f_1, \dots, f_n)(x)=\int f_1(x-t^{m_1}) \cdots f_n(x-t^{m_n}) 2^{k}\rho(2^{k}t)\,\mathrm{d}t\,. 
\end{equation*}
Then \eqref{smo} is a consequence of standard interpolation between the inequalities in Theorem~\ref{thm1/2} and Proposition \ref{prop-0}.
\end{proof}

The kernel $1/t$ can be essentially written as  
$$
 \sum_{k\in \mathbb Z} 2^{k}\rho(2^{k} t)\,,
$$
where $\rho$ is a smooth odd function supported in $ \{t\in \mathbb R: 1/2<|t|<2\}$.
Hence,   we have 
\begin{equation*}
 H_{\Gamma_n}(f_1,\dots, f_n) = \sum_{k\in \mathbb Z} T_k(f_1, \dots, f_n)\,,
\end{equation*}
where $T_k$ is defined as in \eqref{defT_k}.

We shall further decompose the operator $T_k$ in frequency space. To this end, let $\Phi$ be a Schwartz function whose Fourier transform is supported in $\{\xi\in\mathbb R: 1/2<|\xi|<2\}$ and satisfies
$$
 \sum_{\delta\in 2^{\mathbb Z}} \wh\Phi \big( \frac{\xi}{\delta}\big) =1\,, \,\, {\forall }\,\xi\in\mathbb R\backslash\{0\}\,.
$$
Thus, the function $\Phi$ defines a smooth dyadic partition of unity on the real line.  
Given any integer $k$, for each function $f_j$, $j=1,\dots,n$, we decompose $f_j$ into dyadic frequency pieces: 
\begin{equation*}
 f_j =\sum_{\delta \in 2^{\mathbb Z}} f_{j, k, \delta}\,,
\end{equation*}
where  $f_{j, k, \delta}$ is defined by
\begin{equation}\label{s4-1}
\wh{f_{j, k, \delta}}(\xi) = \wh{f_j}(\xi) \wh\Phi\big( \frac{\xi}{2^{m_jk}\delta}\big) \,.
\end{equation}
Then we represent the multilinear Hilbert transform along $(t^{m_1}, \dots, t^{m_n})$ as 
$$
T(f_1, \dots,  f_n)=\sum_k \sum_{\delta_1, \dots, \delta_n\in 2^{\mathbb Z}} T_{k, \delta_1, \dots,  \delta_n}(f_1, \dots,  f_n)\,,
$$
where 
\begin{equation}\label{defT_delta}
T_{k, \delta_1, \dots, \delta_n}(f_1, \dots, f_n)(x):= 
\int f_{1, k, \delta_1}(x-t^{m_1}) \cdots f_{n,  k, \delta_n}(x-t^{m_n}) 2^k \rho(2^k t) \,\mathrm{d}t\,.
\end{equation}

First, notice that the multilinear operator $ T_{k, \delta_1, \dots, \delta_n}(f_1, \dots, f_n)(x)$ can also be represented in terms of the Fourier transforms of $f_j$ as  
$$
\iint\wh{f_{1, k, \delta_1}}(\xi_1)\cdots \wh{f_{n, k, \delta_n}}(\xi_n) e^{ix(\xi_1+\dots+\xi_n)}
  {\mathfrak{m}}_k({\xi_1, \dots, \xi_n}) \,\mathrm{d}\xi_1 \cdots \mathrm{d}\xi_n\,,
 $$
 where the multiplier $\mathfrak{m}_k$ is given explicitly by 
 $$
  {\mathfrak{m}}_k({\xi_1, \dots, \xi_n})= 
  \int e^{-i2^{-m_1 k}\xi_1t^{m_1}}\cdots e^{-i2^{-m_n k}\xi_n t^{m_n}}\rho(t)\,\mathrm{d}t\,.
 $$
 A key observation is that, whenever $\xi_j\in {\rm supp} \wh{ f_{j, k, \delta_j}}$, one has 
 \begin{equation*}
 2^{-m_j k}|\xi_j |\sim \delta_j\,.  
 \end{equation*}

We partition all possible dyadic tuples $(\delta_1, \dots, \delta_n)$ into  cases
 according to the number of indices $j$ for which $\delta_j>1$.  More precisely,  for $J\in \{0, 1, \dots, n\}$,  define 
 $$
 \mathcal D_J := \bigg\{ (\delta_1, \dots, \delta_n)\in 2^{\mathbb Z}\times \dots\times 2^{\mathbb Z}:  \#\big\{j\in\{ 1, \dots, n\}:\delta_j>1 \big\}=J\bigg\}\,.
 $$
 Then 
 \begin{equation*}
T(f_1, \dots,  f_n)= \sum_{J=0}^n\sum_k T_{k, J}(f_1, \dots, f_n)\,,
\end{equation*}
where
\begin{equation}\label{defTkl}
T_{k, J}(f_1, \dots, f_n) =  \sum_{(\delta_1, \dots, \delta_n)\in \mathcal D_J} T_{k, \delta_1, \dots,  \delta_n}(f_1, \dots,  f_n)\,.
\end{equation}

The main theorem, Theorem \ref{thm}, can be reduced to the following proposition.

\begin{proposition}\label{ell}
Let $T_{k, J}$ be defined as in \eqref{defTkl}. 
For any $(1/p_1, \dots, 1/p_n, 1/r)\in \mathbb P_n$ and any $J\in \{0, 1, \dots, n\}$,
 \begin{equation*}
 \Big\| \sum_k T_{k, J} (f_1, \dots,  f_n)\Big\|_{r}\leq C \|f_1\|_{p_1}\cdots \|f_n\|_{p_n}\,,
 \end{equation*}
 where  $C$ is independent of $f_1, \dots, f_n$.   
 \end{proposition}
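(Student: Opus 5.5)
The plan is to treat each $J$ separately, with $J=0$ and $J=1$ reduced to the untranslated paraproduct $\Pi_0$, and $J\geq 2$ reduced to $J$-translated paraproducts. Throughout, the Sobolev smoothing of Proposition~\ref{prop1} supplies geometric decay in the largest high frequency parameter.

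\textbf{The case $J=0$.} Here all $\delta_j\leq 1$, so the sum over $\delta_j\le 1$ of $f_{j,k,\delta_j}$ is just $f_j*\Psi_{j,k,1}$ for a low-pass $\Psi$. Taylor expanding the phase $e^{-i2^{-m_jk}\xi_jt^{m_j}}$ in the multiplier $\mathfrak m_k$ writes $\sum_k T_{k,0}$ as a rapidly convergent sum of untranslated paraproducts of the form $\sum_k c\prod_j f_j*\Psi^{(\ell)}_{j,k,1}$, with coefficients decaying like $C^{\ell}/\ell!$.

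I would then use a telescoping (Littlewood--Paley) decomposition, $\Psi_{j,k,1}=\Psi_{j,k-1,1}+(\text{difference})$. This expresses the sum over $k$ as a boundary term plus paraproducts in which at least one factor is of Littlewood--Paley type, up to a harmless constant term. Splitting $\mathbb K=\mathbb Z$ into $O_n(1)$ well-distributed sets via Theorem~\ref{de-K} makes these admissible. Theorem~\ref{para} with $J=0$ then gives the bound for every $r>1/n$, and in particular on $\mathbb P_n$.

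\textbf{The case $J=1$.} Say $\delta_1>1$ and all other $\delta_j\le1$. I would split $\delta_1\le 2^{L}$ from $\delta_1>2^{L}$ for a constant $L$ of size $O(\max m_j)$.
\begin{itemize}
\item For bounded $\delta_1$, the Taylor expansion argument again produces $\Pi_0$-type paraproducts. Now $f_{1,k,\delta_1}$ is itself of Littlewood--Paley type with parameters $m_1$ and $\delta_1$, so Theorem~\ref{para} applies directly.
\item For large $\delta_1$, I use the nonstationary phase of $\mathfrak m_k$. The frequency $\xi_1$ dominates, so $|\partial_t(\text{phase})|\gtrsim \delta_1$, and integrating by parts yields $O(\delta_1^{-N})$ decay. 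The resulting operator is again an untranslated paraproduct; summing over dyadic $\delta_1$ costs nothing because of the decay.
\end{itemize}
This is why the paper says $J=1$ reduces to $J=0$.

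\textbf{The case $J\ge2$.} Let $\delta_{\max}=\max_j\delta_j>1$. If some large $\delta_j$ strictly dominates the others, integration by parts in $t$ gives rapid decay as in the case $J=1$. The genuinely hard configurations are those where two or more large frequencies are comparable and may cancel in the phase.

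For these, I write each high-frequency factor $f_{j,k,\delta_j}(x-t^{m_j})$ with $t$ rescaled. The translation $2^{-m_jk}t^{m_j}$ equals $2^{n_j}2^{-m_jk}\delta_j^{-1}t^{m_j}$ with $2^{n_j}=\delta_j$, which is exactly the translated factor $f_{j,k,\delta_j,n_j,t}$. The low-frequency factors are expanded into $\Psi$-type pieces as before. Thus $\sum_k T_{k,\delta_1,\dots,\delta_n}$ becomes a $J$-translated paraproduct with $n_J^*\approx\log_2\delta_{\max}$. After partitioning $\mathbb K$ by Theorem~\ref{de-K} (of order $n_J^*+O(1)$, with $O(n_J^*)$ exceptional $k$ handled by the single-scale bound Theorem~\ref{thm1/2}), Theorem~\ref{para} gives the bound $C2^{\varepsilon n_J^*}\prod_j\|f_j\|_{p_j}$ on $\mathbb P_n$, since $r>1/2\ge r_J$.

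On the other hand, Proposition~\ref{prop1} at each scale gives decay $\delta_{\max}^{-c}$. I must make this decay summable in $k$, and this is the main obstacle. I would handle it by interpolating the smoothing estimate against the paraproduct estimate. Concretely:
\begin{itemize}
\item First establish, for each fixed tuple $(\delta_j)$, the bound $\|\sum_kT_{k,\delta}\|_r\lesssim 2^{\varepsilon n_J^*}$ from the paraproduct theory.
\item Separately obtain decay $\delta_{\max}^{-c'}$ for the full sum over $k$. For $r\ge1$ this comes from square-function and Littlewood--Paley orthogonality in $k$ combined with Proposition~\ref{prop1}. For $r<1$ it comes from interpolating with a crude bound at slightly different exponents.
\item Choosing $\varepsilon<c'$, the sum over dyadic tuples converges, with quasi-triangle $r$-th power summation for $r<1$.
\end{itemize}
Only $O((\log\delta_{\max})^{n})$ tuples share a given $\delta_{\max}$, so the $\delta_{\max}^{-c'+\varepsilon}$ decay absorbs the counting. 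Summing over $J$ completes the proof.
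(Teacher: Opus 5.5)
Your treatment of $J\ge 2$ follows the paper: you set $2^{n_j}=\delta_j$ to get a $J$-translated paraproduct, apply Theorem~\ref{para} with loss $2^{\e n_J^*}$, get decay from Proposition~\ref{prop1}, then interpolate and sum. Your treatment of $J=1$, via Taylor expansion plus nonstationary phase in $\delta_1$, also follows the paper.

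\textbf{The gap is in the case $J=0$.} After you sum $\delta_j\le 1$ into $f_j*\Psi_{j,k,1}$ and Taylor expand, the term with all Taylor orders equal to zero is
\[
\Big(\int\rho(t)\,\mathrm{d}t\Big)\sum_{k\in\mathbb Z}\prod_{j=1}^n f_j*\Psi_{j,k,1}.
\]
This term has no Littlewood--Paley-type factor. Since $\widehat\Psi\equiv 1$ near the origin, its summands tend to $\prod_j f_j$ as $k\to+\infty$, so the sum diverges. No telescoping $\Psi_{j,k,1}=\Psi_{j,k-1,1}+(\text{difference})$ can repair this. Summing $\prod_j g_{j,k}-\prod_j g_{j,k-1}$ over $k$ only yields the trivial identity, and a bilateral sum of non-decaying terms has no ``boundary term.''

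This piece is harmless only because its coefficient vanishes. The function $\rho$ is odd (it comes from the kernel $1/t$), so $\int\rho=0$. The paper uses exactly this to discard the $(0,\dots,0)$ term in \eqref{Tk0}. Your proposal never invokes this cancellation, and no argument that ignores it can succeed: if $\int\rho\neq 0$, the operator $\sum_k T_{k,0}$ diverges pointwise for nice $f_j$.

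\textbf{Two smaller points.} First, in the terms where some Taylor order $h_\ell\ge 1$, the factor $f_\ell*\Psi^{(h_\ell)}_{\ell,k,1}$ has multiplier $\xi^{h_\ell}\widehat\Psi(\xi)$ at scale $2^{m_\ell k}$. This is not supported in an annulus, so it is not of Littlewood--Paley type and Theorem~\ref{para} does not apply to it directly. The right move, as in the paper, is not to sum the $\delta_\ell\le 1$ pieces first. Keep them separate, so that each $\{f_{\ell,k,\delta_\ell}\}_k$ is of Littlewood--Paley type. Then sum the resulting admissible paraproducts with the geometric weights $\delta_\ell^{h_\ell}$, using the $r$-triangle inequality when $r<1$. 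No telescoping is needed.

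Second, in the case $J=1$ with large $\delta_1$, integration by parts gives decay of the multiplier but not yet a paraproduct. You first need to Taylor expand the low-frequency factors. Then expand $\varphi_{h_2,\dots,h_n}(x)\widehat\Phi(x)$ in a Fourier series in $x=2^{-m_1k}\xi_1/\delta_1$. This produces translated factors $f_{1,k,\delta_1,m}$ with shifts $m2^{-m_1k}\delta_1^{-1}$. The result is therefore not literally untranslated; it is controlled because the Fourier coefficients $C_m$ decay rapidly.
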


The rest of this section is devoted to proving Proposition \ref{ell}.   The extremal case $J=n$ is the most favorable, as it follows directly from the Sobolev smoothing inequality in Proposition \ref{prop1}, together with the uniform estimates for the paraproducts in Theorem \ref{para}. The other extremal case, $J=0$, does not require Proposition \ref{prop1} and can be handled using only Theorem \ref{para}. The case $J=1$ can be reduced to the case $J=0$. For the intermediate cases $2\leq J\leq n-1$, the argument combines the paraproduct estimates with the Sobolev smoothing inequality, allowing these cases to be treated in a similar manner.

\subsection{The favorable extremal case: \texorpdfstring{$J=n$}{J=n}}\label{Jisn}

It follows from the definition of the Fourier multiplier ${\mathfrak m}_k$ that
$$
\sum_k\sum_{(\delta_1, \dots, \delta_n)\in\mathcal D_n}T_{k, \delta_1, \dots, \delta_n}(f_1, \dots, f_n)
= \sum_{\delta_1, \dots, \delta_n>1}\Pi_{\delta_1, \dots, \delta_n}(f_1, \dots, f_n)\,.
$$
Applying Theorems \ref{de-K} and  \ref{para}, we see that, for any $\e>0$, 
\begin{equation}\label{e}
 \big\|\Pi_{\delta_1, \dots, \delta_n}(f_1, \dots, f_n) \big\|_r\lesssim  \big( \prod_{j=1}^n \delta_j\big)^\e  \prod_{j=1}^n\big\|f_j\|_{p_j}\,,
\end{equation}
for all $ p_1, \dots, p_n>1$ and $r>1/2$. Employing the Sobolev smoothing inequality in Proposition \ref{prop1} for the tuple 
$\big(1/2,1/2, \e, \dots, \e, 1+(n-2)\e\big)\in\mathbb P_n$ and then applying interpolation, 
we obtain a positive constant $c$ such that
\begin{equation}\label{c}
\big\|\Pi_{\delta_1, \dots, \delta_n}(f_1, \dots, f_n) \big\|_r\lesssim  \big( \prod_{j=1}^n \delta_j\big)^{-c}  \prod_{j=1}^n\big\|f_j\|_{p_j}\,,
\end{equation}
for $(1/p_1, \dots, 1/p_n, 1/r)\in\mathbb P_n $.  
Interpolating between \eqref{e} and \eqref{c}, there exists a positive constant $c'$ such that
\begin{equation*}
\big\|\Pi_{\delta_1,\dots,\delta_n}(f_1,\dots,f_n)\big\|_r
\lesssim
\bigg(\prod_{j=1}^n\delta_j\bigg)^{-c'}
\prod_{j=1}^n\|f_j\|_{p_j},
\end{equation*}
for all $p_1,\dots,p_n>1$ and $r>1/2$. Summing over all possible values of $\delta_1,\dots,\delta_n>1$ then proves Proposition \ref{ell} for $J=n$.

\subsection{The intermediate cases: \texorpdfstring{$2\leq J\leq n-1$}{2 <= J <= n-1}}\label{J>2} 
For any $(\delta_1, \dots, \delta_n)\in\mathcal D_J$, 
there is a permutation $(i_1, i_2, \dots, i_n )$ of $(1, \dots, n)$ such that 
\begin{equation*}
 \min\big\{\delta_{i_1}, \dots, \delta_{i_J}\big\} >1 \,\,{\rm and} \,\,  \max\big\{\delta_{i_{J+1}}, \dots, \delta_{i_n}\big\} \leq 1\,. 
\end{equation*}
Without loss of generality, we may assume that 
$$
 \min\big\{\delta_{1}, \dots, \delta_{J}\big\} >1 \,\,{\rm and} \,\,  \max\big\{\delta_{{J+1}}, \dots, \delta_{n}\big\} \leq 1\,. 
 $$

For $J+1\leq j \leq n$,  we expand the exponential function $e^{-i2^{-m_{j}k}\xi_{j} t^{m_{j}} }$ into its
Taylor series to obtain 
\begin{equation*}
e^{-i2^{-m_{j}k}\xi_{j} t^{m_{j}} }  = \sum_{h=0}^\infty \frac{(-i)^h}{h!} \big(  2^{-m_{j} k} \xi_{j} \big)^h  t^{m_{j} h}\,,
\end{equation*}
which behaves exactly like 
$$
 \sum_{h=0}^{\infty} \frac{(-i)^h}{h!} \delta_{j}^h t^{m_{j}h}\,,
$$
whenever $\xi_{j} $ lies in the support of $\wh{\Phi_{j, k, \delta_{j}}}$.
Consequently, the Fourier multiplier ${\mathfrak m}_k$ satisfies
\begin{equation*}
 {\mathfrak{m}}_k \sim \sum_{\substack{h_{J+1}, \dots, h_n\geq 0 }} \frac{(-i)^{h_{J+1}+\dots+ h_n}}{h_{J+1}!\cdots h_n!}  {\mathfrak m}_{k, h_{J+1}, \dots, h_n}\,, 
\end{equation*} 
  where 
$$
{\mathfrak m}_{k, h_{J+1}, \dots, h_n} (\xi_1, \dots, \xi_n)=
 \int  \prod_{1\leq j \leq J} e^{-i2^{-m_{j}k }\xi_{j} t^{m_{j}}}   \prod_{j=J+1}^n  \delta_{j}^{h_j} t^{m_{j}h_j} \rho(t)\,\mathrm{d}t\,. 
$$
Due to the rapid decay of the factors $1/h_j!$ for all $J+1\leq j\leq n$, it suffices to consider ${\mathfrak m}_{k,h_{J+1},\dots,h_n}$ with $(h_{J+1},\dots,h_n)$ fixed. Let
$$
 \mathbb J = \big\{ j\in\{J+1, \dots, n\}: h_j =0\big\}  \,\,{\rm and}\,\, \mathbb J^c= \{J+1, \dots, n\}\backslash \mathbb J
 \,.
$$
For $j\in\mathbb J$, summing over the dyadic numbers $\delta_j$ gives
\begin{equation*}
\sum_{\delta_j\leq1}\widehat{\Phi}\bigg(\frac{\xi_j}{2^{m_jk}\delta_j}\bigg)
=\widehat{\Psi}\bigg(\frac{\xi_j}{2^{m_jk}}\bigg),
\end{equation*}
where $\widehat{\Psi}$ is a smooth function supported in $[-2,2]$. 
Hence we have, for $j\in\mathbb J$,  
\begin{equation*}
 \sum_{\delta_j\leq 1} f_{j, k, \delta_j} = f_{j, k, 1, \Psi}\,. 
\end{equation*}

Suppose that the set $\mathbb J^c$ contains exactly the indices $j_1,\dots,j_\ell\in\{J+1,\dots,n\}$.
Then we can represent
\begin{equation*}
 \sum_kT_{k, J}(f_1, \dots, f_n) = \sum_{\substack{h_{J+1}, \dots, h_n\geq 0 }} \frac{(-i)^{h_{J+1}+\dots+ h_n}}{h_{J+1}!\cdots h_n!}  
  T_{J, h_{J+1}, \dots, h_n}(f_1, \dots, f_n)\,,
 \end{equation*}
where $T_{J, h_{J+1}, \dots, h_n}(f_1, \dots, f_n)$ behaves like
$$
 \sum_{\substack{\delta_1, \dots, \delta_J>1\\ \delta_{j_1}, \dots,\delta_{j_\ell}\leq 1 }}
 \delta_{j_1}^{h_{j_1}}\cdots\delta_{j_\ell}^{h_{j_\ell}} \Pi_{\delta_1, \dots, \delta_J}(f_1, \dots, f_n)\,.
$$
As in Subsection \ref{Jisn}, it follows from Theorems \ref{de-K} and \ref{para}, together with Proposition \ref{prop1}, that there exists a constant $c'>0$ such that
\begin{equation*}
\big\|\Pi_{\delta_1,\dots,\delta_J}(f_1,\dots,f_n)\big\|_r
\lesssim
\bigg(\prod_{j=1}^J \delta_j\bigg)^{-c'}
\prod_{j=1}^n\|f_j\|_{p_j},
\end{equation*}
for all $p_1, \dots, p_n>1$ and $r>r_J$ with $1/r=1/p_1+\dots+ 1/p_n$.  
Summing over all possible values of $\delta_j$, we obtain the $(p_1,\dots,p_n,r)$ estimates for $r>r_J$, which clearly imply Proposition \ref{ell} for $2\leq J\leq n-1$.

\subsection{The degenerate extremal case:  \texorpdfstring{$J=0$}{J=0}}
Repeating the argument from Subsection \ref{J>2}, we obtain the following representation:
\begin{equation}\label{Tk0}
 \sum_kT_{k, 0}(f_1, \dots, f_n) = \sum_{\substack{h_1, \dots, h_n\geq 0 \\
  (h_1, \dots, h_n)\neq (0, \dots, 0)}} \frac{(-i)^{h_{1}+\dots+ h_n}}{h_{1}!\cdots h_n!}  
  T_{0, h_{1}, \dots, h_n}(f_1, \dots, f_n)\,,
 \end{equation}
where $T_{ 0, h_{1}, \dots, h_n}(f_1, \dots, f_n)$ behaves like
$$
 \sum_{\substack{ \delta_{j_1}, \dots,\delta_{j_\ell}\leq 1 }}
 \delta_{j_1}^{h_{j_1}}\cdots\delta_{j_\ell}^{h_{j_\ell}} \Pi_{0}(f_1, \dots, f_n)\,.
$$
Here, we have used the cancellation property of $\rho$, namely, $\int \rho(t)\,\mathrm{d}t=0$. 
It is clear that there exists at least one $h_\ell\neq 0$ on the right side of \eqref{Tk0}. 
This yields that the paraproduct $\Pi_0$ is admissible, and  Proposition \ref{ell} follows immediately from Theorem \ref{para}.

\subsection{The reducible case: \texorpdfstring{$J=1$}{J=1}}

For any $( \delta_1, \dots, \delta_n)\in \mathcal D_1$,  
there is a permutation $ (i_1, \dots, i_n)$ of $(1, \dots, n)$ such that 
\begin{equation*}
 \delta_{i_1}>1 \,\, {\rm and}\,\,  \max\big\{ \delta_{i_2}, \dots, \delta_{i_n}\big\}\leq 1\,.
\end{equation*}
Without loss of generality, we may assume that 
$$
 \delta_{1}>1 \,\, {\rm and}\,\,  \max\big\{ \delta_{2}, \dots, \delta_{n}\big\}\leq 1\,.
 $$
 As we did in Subsection \ref{J>2}, we have that
 the Fourier multiplier ${\mathfrak m}_k$ satisfies
\begin{equation*}
 {\mathfrak{m}}_k \approx \sum_{\substack{h_{2}, \dots, h_n\geq 0 }} \frac{(-i)^{h_{2}+\dots+ h_n}}{h_{2}!\cdots h_n!}  {\mathfrak m}_{k, h_{2}, \dots, h_n}\,, 
\end{equation*} 
  where 
$$
{\mathfrak m}_{k, h_{2}, \dots, h_n} (\xi_1, \dots, \xi_n)=\prod_{j=2}^n  \delta_{j}^{h_j}
 \varphi_{h_2, \dots, h_n}\Big( \frac{2^{-m_1k}\xi_1}{\delta_1}\Big)
 $$
with
 $$
 \varphi_{h_2, \dots, h_n}(x)= \int   e^{-i\delta_1 x t^{m_{1}}}   \prod_{j=2}^n  t^{m_{j}h_j} \rho(t)\,\mathrm{d}t\,. 
$$
Recall that when $\xi_1$ lies in the support of $\wh{\Phi_{1,k, \delta_1}}$, 
 $$
 2^{-m_1k} |\xi_1| \sim \delta_1\,,
 $$
 which allows us to restrict $\varphi_{h_2, \dots, h_n}$ smoothly to the region $|x|\sim 1$.  It is easy to see that, for any $\ell\in\mathbb N\cup \{0\}$, 
\begin{equation*}
 \big| D^{\ell} \varphi_{h_2, \dots, h_n}(x)\big| \lesssim_{\ell, N, m_1, \dots, m_n}
2^{\sum_{j=2}^{n}m_j h_j}\Big(1+\sum_{j=2}^{n} h_j\Big)^{N+\ell}\delta_1^{-N},
 \quad |x|\sim 1\,,
 \end{equation*}
 for any $N\in\mathbb N$.  Using Fourier series, we get
 \begin{equation*}
  \varphi_{h_2, \dots, h_n}(x)\wh\Phi(x) =\sum_{m\in\mathbb Z}  C_{m} e^{-imx }\, ,
   \end{equation*}
 where the Fourier coefficient $C_m$ satisfies 
 $$
  \big| C_m \big|\lesssim_{N, m_1, \dots, m_n} 2^{\sum_{j=2}^{n}m_j h_j}\Big(1+\sum_{j=2}^{n}h_j\Big)^{2N}\frac{\delta_1^{-N}}{(1+|m|)^N}\,.
 $$

As in Subsection \ref{J>2},  we have 
\begin{equation*}
 \sum_kT_{k, 1}(f_1, \dots, f_n) =\sum_m  \sum_{\delta_1>1}\sum_{\substack{h_{2}, \dots, h_n\geq 0 }} \frac{(-i)^{h_{2}+\dots+ h_n}C_m}{h_{2}!\cdots h_n!}  
  T_{h_2, \dots, h_n, \delta_1}^{(m)}(f_1, \dots, f_n)\,,
 \end{equation*}
where $T_{h_2, \dots, h_n,\delta_1}^{(m)}(f_1, \dots, f_n)$ behaves like
$$
 \sum_{\delta_{j_1}, \dots,\delta_{j_\ell}\leq 1}
 \delta_{j_1}^{h_{j_1}}\cdots\delta_{j_\ell}^{h_{j_\ell}} \Pi_{\delta_1}^{(m)}(f_1, \dots, f_n)\,.
$$
Here $\Pi_{\delta_1}^{(m)}(f_1,\dots,f_n)$ is given explicitly by
\begin{equation*}
 \Pi_{\delta_1}^{(m)}(f_1,\dots,f_n)
 =
 \sum_k f_{1,k,\delta_1,m}\prod_{j=2}^n f_{j,k,\delta_j,\Psi},
\end{equation*}
where
\[
f_{1,k,\delta_1,m}(x)
=
f_1*\Phi_{1,k,\delta_1}
\big(x-m2^{-m_1k}\delta_1^{-1}\big).
\]
Due to the rapid decay provided by $C_m$, the shift $m2^{-m_1k}\delta_1^{-1}$ causes no significant difficulty. Thus, we may assume $m=0$, since the case of general $m$ can be treated in exactly the same way and yields the same estimates. We are therefore reduced to the same type of admissible paraproduct as in the case $J=0$. This allows us to conclude Proposition \ref{ell} for $J=1$.\\

Therefore, the proof of Proposition \ref{ell} is complete, and consequently the proof of our main theorem, Theorem \ref{thm}, is also complete.\\ 

The main idea of the proof is to reduce the original problem to a family of uniform paraproduct estimates. This reduction is made possible by the Sobolev smoothing inequality, which allows us to control the relevant frequency-localized terms uniformly across the different parameter regimes. Once the problem is reduced to these uniform estimates, the desired bounds follow from the established paraproduct theory, whose proof will be provided in Sections \ref{Banach} and \ref{Pf-para}.


\section{Variants of Littlewood--Paley and maximal function theory}\label{VLP}

Recall that $\Phi$ is a Schwartz function whose Fourier transform is supported in $\{\xi\in \mathbb R: 1/2\leq |\xi|\leq 2 \}$ and 
that $\Phi_{j, k, \delta}$ is defined as in \eqref{s1-2}.

\begin{theorem}\label{LP}
Let $\beta \in\mathbb R$. Then for any $1<p<\infty$, 
\begin{equation*}
\bigg\|\bigg( \sum_k \big| f*\Phi_{j, k, \delta}\big(\cdot -2^{-m_jk}\delta^{-1} \beta \big)\big|^2\bigg)^{1/2} \bigg\|_{p}
\lesssim  \log\big( 10+|\beta|\big) \|f\|_p
\end{equation*}
holds for all $f\in L^p$.  Here the implicit constant is independent of $\delta$, $\beta$,  and $m_j\in\mathbb N$. 
\end{theorem}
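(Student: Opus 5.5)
We will treat the left-hand side as a vector-valued singular integral and apply the standard Calder\'on--Zygmund theory, carrying the dependence on $\beta$ through the kernel estimates.

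\emph{Reduction.} Set $\lambda_k=2^{m_jk}\delta$. Then
\[
f*\Phi_{j,k,\delta}(x-\lambda_k^{-1}\beta)=f*K_k(x),\qquad K_k(y)=\lambda_k\Phi\big(\lambda_k y-\beta\big).
\]
The frequencies $\lambda_k$ are lacunary, since $\lambda_{k+1}/\lambda_k=2^{m_j}\geq 2$. All bounds below depend only on this lacunarity, which gives uniformity in $\delta$ and $m_j$. Consider the operator $\vec T f=\{f*K_k\}_k$, mapping scalar functions to $\ell^2$-valued functions.

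\emph{$L^2$ bound.} By Plancherel, $\|\vec Tf\|_{L^2(\ell^2)}^2=\int|\wh f(\xi)|^2\sum_k|\wh\Phi(\xi/\lambda_k)|^2\,\mathrm{d}\xi$. The translation only contributes a unimodular factor $e^{-i\beta\xi/\lambda_k}$, so it does not affect this computation. Because $\wh\Phi$ is supported in an annulus and the $\lambda_k$ are lacunary with ratio at least $2$, the sum is bounded by an absolute constant. Hence $\vec T$ is bounded on $L^2$ uniformly in $\beta$.

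\emph{H\"ormander condition.} This is the main obstacle. We need
\[
\sup_{y\neq 0}\int_{|x|>2|y|}\Big(\sum_k|K_k(x-y)-K_k(x)|^2\Big)^{1/2}\mathrm{d}x\lesssim \log(10+|\beta|).
\]
Bound the $\ell^2$ norm by the $\ell^1$ norm and split the sum over $k$ into three ranges.

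\begin{itemize}
\item[(i)] $\lambda_k|y|\leq 1/(1+|\beta|)$. The mean value theorem gives $|K_k(x-y)-K_k(x)|\lesssim\lambda_k^2|y|\,(1+|\lambda_kx-\beta|)^{-2}$. Integrating in $x$ yields $\lambda_k|y|$, and summing this geometric series over the range gives $O(1)$.
\item[(ii)] $\lambda_k|y|\geq 1$. Each term $K_k(x)$ and $K_k(x-y)$ is concentrated where $|\lambda_kx-\beta|\lesssim1$, and this region lies essentially inside $|x|\lesssim(1+|\beta|)/\lambda_k$. Intersecting with $|x|>2|y|$ forces $\lambda_k|y|\lesssim 1+|\beta|$, up to a Schwartz tail of size $(\lambda_k|y|/(1+|\beta|))^{-N}$, which is summable. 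In the remaining window each term has $L^1$ norm $O(1)$ and the window contains $O(\log(10+|\beta|))$ values of $k$.
\item[(iii)] $1/(1+|\beta|)<\lambda_k|y|<1$. Again each term has $L^1$ norm $O(1)$, and lacunarity leaves only $O(\log(10+|\beta|))$ such $k$.
\end{itemize}
Together these give the H\"ormander constant $O(\log(10+|\beta|))$. The delicate point is the tail bookkeeping in range (ii), which must stay uniform in $\delta$ and $m_j$ and rely only on lacunarity.

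\emph{Conclusion.} The vector-valued Calder\'on--Zygmund theorem gives weak $(1,1)$ with constant $O(\log(10+|\beta|))$. Marcinkiewicz interpolation with the $L^2$ bound then gives $L^p$ boundedness for $1<p\leq2$. For $2<p<\infty$, apply the same argument to the adjoint, whose kernel $\{\overline{K_k(-y)}\}$ satisfies identical estimates (equivalently, use duality). This yields the stated bound $\lesssim\log(10+|\beta|)\|f\|_p$.
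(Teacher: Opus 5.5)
Your proof is correct and follows essentially the same route as the paper. Both arguments use Plancherel for the $L^2$ bound, then verify the H\"ormander condition by bounding the $\ell^2$ norm by the $\ell^1$ sum and splitting $k$ into three ranges: a small-scale range handled by the mean value theorem, a middle range of $O(\log(10+|\beta|))$ terms controlled by $\sup_k\|K_k\|_1\lesssim 1$, and a large-scale range killed by Schwartz decay. The only differences are cosmetic: your thresholds are $1/(1+|\beta|)$ and $\sim 1+|\beta|$ rather than the paper's $\beta^{-100}$ and $8\beta$, and you spell out the duality step for $p>2$, which the paper leaves inside ``standard Calder\'on--Zygmund theory.''
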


\begin{proof}
By the $L^2$-Littlewood--Paley theory, we get
\begin{equation*}
\bigg\|\bigg( \sum_k \big| f*\Phi_{j, k, \delta}\big(\cdot -2^{-m_jk}\delta^{-1} \beta \big)\big|^2\bigg)^{1/2} \bigg\|_{2}
\lesssim  \|f\|_2\,,
\end{equation*}
which yields Theorem \ref{LP} for $p=2$, with an operator norm that is uniform in $\beta$, $\delta$, and $m_j$.
The square function in the theorem can be viewed as a vector-valued Calder\'on--Zygmund operator, whose convolution-type kernel
is given by
\begin{equation}\label{ker}
 \vec{K}_\beta(x) = \big\{ \Phi_{j, k, \delta}(x-2^{-m_jk}\delta^{-1}\beta)\big\}_{k}:= \big\{ K_{k, \beta}(x)\big\}_k\,.
\end{equation}
To complete the proof, by the standard Calder\'on--Zygmund theory,
it suffices to verify the following H\"ormander condition: 
\begin{equation}\label{Hor2}
\int_{|x|> 2|y|}  \sum_k \big| K_{k, \beta}(x-y)- K_{k, \beta}(x)\big| \,\mathrm{d}x \lesssim \log(10+|\beta|)\,.
\end{equation}
We may assume that $|\beta|\geq 10$, since the estimate for $|\beta|<10$ follows readily. Without loss of generality, we may further assume that $\beta\geq10$. We split the sum on the left-hand side of \eqref{Hor2} into three parts:
$$
 S_1: = \int_{|x|> 2|y|}   \sum_{\substack{k\\ \frac{1}{\beta^{100}}\leq |2^{m_jk}\delta y|\leq 8\beta}}  \big| K_{k, \beta}(x-y)- K_{k, \beta}(x)\big| \,\mathrm{d}x\,,
 $$
 $$
 S_2: = \int_{|x|> 2|y|}   \sum_{\substack{k\\  |2^{m_jk}\delta y| < \frac{1}{\beta^{100}}}}  \big| K_{k, \beta}(x-y)- K_{k, \beta}(x)\big| \,\mathrm{d}x\,,
 $$ 
 and 
 $$
 S_3: = \int_{|x|> 2|y|}   \sum_{\substack{k\\  |2^{m_jk}\delta y| > 8\beta  }}  \big| K_{k, \beta}(x-y)- K_{k, \beta}(x)\big| \,\mathrm{d}x\,.
 $$  
 There are only $O(\log \beta)$ values of $k$ in the sum defining $S_1$. We simply apply the triangle inequality and integrate the absolute values of the two terms in the difference to obtain
  \begin{equation}\label{S1}
 S_1\lesssim \log \beta\,, 
 \end{equation}  
 since $\sup_k\|K_{k, \beta}\|_1\leq C$.  
 
 To estimate $S_2$, we apply the mean value theorem to obtain
 $$
   \big| K_{k, \beta}(x-y)- K_{k, \beta}(x)\big| \lesssim \frac{ 2^{2m_jk}\delta^2 |y| }{(1+  \beta^{-1}2^{m_jk}\delta|x| )^N}\,,
 $$ 
 for any $N\in\mathbb N$, provided that $|x|>2|y|$. Thus, we have 
 \begin{equation}\label{S2}
  S_2\lesssim \int_{|x|> 2|y|} \sum_{\substack{k\\  |2^{m_jk}\delta y| < \frac{1}{\beta^{100}}}}   \frac{ 2^{2m_jk}\delta^2 |y| }{(1+  \beta^{-1}2^{m_jk}\delta|x| )^N} \,\mathrm{d}x
  \lesssim C\,.
 \end{equation}
 
  For the values of $k$ appearing in the sum defining $S_3$, we have
   $|2^{m_jk}\delta x| > 16 \beta$ whenever $|x|>2|y|$.    
 By the rapid decay of the Schwartz function $\Phi$,  we obtain
 \begin{equation*}
  \big| K_{k, \beta}(x-y)- K_{k, \beta}(x)\big| \lesssim \frac{2^{m_jk}\delta}{(1+ 2^{m_jk}\delta|x|)^N}\,, 
  \end{equation*}
 for any $N\in\mathbb N$. Hence, we have 
\begin{equation}\label{S3}
  S_3\lesssim \int_{|x|> 2|y|}   \sum_{\substack{k\\  |2^{m_jk}\delta y| > 8\beta  }}  \frac{2^{m_jk}\delta}{(1+ 2^{m_jk}\delta|x|)^N} \,\mathrm{d}x
  \lesssim C\,.
 \end{equation}
 
 Combining \eqref{S1}, \eqref{S2}, and \eqref{S3}, we obtain the desired H\"ormander-type estimate \eqref{Hor2}, which completes the proof.
 \end{proof}

 \begin{corollary}\label{corLP}
For any $\beta\in\mathbb R$, define the maximal operator
\begin{equation*}
T^*f(x)=\sup_k\big|
f*\Phi_{j,k,\delta}
\big(x-2^{-m_jk}\delta^{-1}\beta\big)
\big|.
\end{equation*}
Then, for any $1<p<\infty$,
\begin{equation*}
\|T^*f\|_p
\lesssim
\log(10+|\beta|)\|f\|_p.
\end{equation*}
Here, the implicit constant is independent of $\delta$, $\beta$, and $m_j\in\mathbb N$.
\end{corollary}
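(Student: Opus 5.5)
The plan is to dominate the maximal operator pointwise by the translated square function of Theorem \ref{LP}. Nothing beyond that theorem should be needed.

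Fix $x\in\mathbb R$ and write
$$
g_k(x)=f*\Phi_{j,k,\delta}\big(x-2^{-m_jk}\delta^{-1}\beta\big),
$$
so that the $g_k$ are exactly the functions in the square function of Theorem \ref{LP}. For every fixed $k_0$ we have $|g_{k_0}(x)|^2\leq \sum_k |g_k(x)|^2$. Taking the supremum over $k_0$ gives the pointwise bound
$$
T^*f(x)=\sup_k|g_k(x)|\leq\bigg(\sum_k|g_k(x)|^2\bigg)^{1/2}.
$$
This is just the embedding $\ell^2\hookrightarrow\ell^\infty$ with constant one.

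Next I take $L^p$ norms of both sides for $1<p<\infty$ and invoke Theorem \ref{LP} with the same $\beta$, $\delta$ and $m_j$. This yields
$$
\|T^*f\|_p\leq\bigg\|\bigg(\sum_k|g_k|^2\bigg)^{1/2}\bigg\|_p\lesssim\log(10+|\beta|)\,\|f\|_p.
$$
The implicit constant is inherited from Theorem \ref{LP}, so it is independent of $\delta$, $\beta$ and $m_j$, as required.

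Once Theorem \ref{LP} is available there is no real obstacle. The only point I would check is measurability of $T^*f$ when the index set of $k$ is infinite. I would handle this by first restricting the supremum to $|k|\leq N$, where both sides are finite sums and the bound holds uniformly in $N$, and then letting $N\to\infty$ by monotone convergence. If one wanted a sharper dependence on $\beta$ than the logarithm, one would instead bound $|g_k|$ by a Peetre-type maximal function adapted to the shift. That refinement is not needed for the corollary as stated.
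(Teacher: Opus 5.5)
Your proof is correct and is exactly the paper's argument: the pointwise bound $\sup_k|g_k|\leq\big(\sum_k|g_k|^2\big)^{1/2}$, followed by Theorem \ref{LP}, with the uniformity in $\delta$, $\beta$, $m_j$ inherited from that theorem. Two side points do not affect the argument: the truncation step is unnecessary, since a supremum of countably many measurable functions is measurable, and your closing aside is off, because a Peetre-type majorant of the shifted pieces costs a power of $1+|\beta|$, which is worse than the logarithm rather than sharper.
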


\begin{proof}
By the pointwise inequality
$\sup_k |a_k|\leq \big(\sum_k |a_k|^2\big)^{1/2}$, we have
\[
|T^*f(x)|\leq \bigg(\sum_k\big|f*\Phi_{j,k,\delta}\big(x-2^{-m_jk}\delta^{-1}\beta\big)\big|^2\bigg)^{1/2}.
\]
Therefore, by Theorem \ref{LP},
\[
\|T^*f\|_p\lesssim \log(10+|\beta|)\|f\|_p\,,
\]
as desired.
\end{proof}

 
\section{Banach-space case: \texorpdfstring{$r\geq 1$}{r>=1}}\label{Banach}

It turns out that the Banach-space case $r\geq1$ is simpler than the quasi-Banach case $0<r<1$. We present the proof of the Banach-space case in this section.

\begin{proposition}\label{Lr>1}
Let $n\geq 2$ and $J\in\{2, \dots, n\}$. Let $\Pi_{\delta_1, \dots, \delta_J}$ be an admissible paraproduct.
Then there exists a constant $C$, independent of $\delta_1, \dots, \delta_n$ and $n_1, \dots, n_J$, such that
\begin{equation}\label{bana-J>2}
\big\| \Pi_{\delta_1, \dots, \delta_J}  (f_1, \dots, f_n) \big\|_r \leq C \prod_{j=1}^J n_j  \prod_{j=1}^n \big\|f_j\big\|_{p_j}\,,
\end{equation}
 for all $p_1, \dots, p_n>1$ and $r\geq 1$, provided that $(p_1, \dots, p_n, r)$ obeys the H\"older relation 
\eqref{homo-r}.
\end{proposition}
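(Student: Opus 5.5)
The plan is to dualize and reduce, via the well-distributed structure of $\mathbb K$, to a pointwise bound by square and maximal functions. Both kinds are controlled in $L^p$ by Theorem \ref{LP} and Corollary \ref{corLP}, which lose at most a factor $n_j$ per translated function.

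First I fix $t$ in the support of $\rho^*$. Since $r\geq 1$, Minkowski's inequality lets me bring the $t$-integral outside the $L^r$ norm:
\[
\|\Pi_{\delta_1,\dots,\delta_J}(f_1,\dots,f_n)\|_r\leq \int|\rho^*(t)|\,\Big\|\sum_{k\in\mathbb K}\prod_{j\leq J}f_{j,k,\delta_j,n_j,t}\prod_{j>J}f_{j,k,\delta_j,\Psi}\Big\|_r\,\mathrm{d}t .
\]
It therefore suffices to bound the inner norm uniformly in $|t|\sim 1$. For fixed $t$, each translated function $f_{j,k,\delta_j,n_j,t}$ equals $f_j*\Phi_{j,k,\delta_j}(\cdot-2^{-m_jk}\delta_j^{-1}\beta_j)$ with $\beta_j=2^{n_j}t^{m_j}$. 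Hence $\log(10+|\beta_j|)\lesssim n_j$ (using $n_j\geq 1$), and Theorem \ref{LP} and Corollary \ref{corLP} give
\[
\Big\|\Big(\sum_k|f_{j,k,\delta_j,n_j,t}|^2\Big)^{1/2}\Big\|_p+\Big\|\sup_k|f_{j,k,\delta_j,n_j,t}|\Big\|_p\lesssim n_j\|f_j\|_p .
\]
For the untranslated factors $j>J$, the standard maximal bound $\sup_k|f_{j,k,\delta_j,\Psi}|\lesssim Mf_j$ holds, and if such an index is of Littlewood--Paley type, the usual square function bound holds as well.

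If $\mathbb K$ is degenerate (cardinality $O_n(K)$), I bound each term directly by H\"older and the translation invariance of $L^p$ norms. This gives a bound $O(K)\prod_j\|f_j\|_{p_j}$, and $K\lesssim n_J^*+O(1)\lesssim\prod_{j\leq J}n_j$.

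In the non-degenerate case, there is a permutation with $2^{m_{i_1}k}\delta_{i_1}\preceq_K\cdots\preceq_K 2^{m_{i_n}k}\delta_{i_n}$ for all $k$. I will show that at least two of the factors in each summand are of Littlewood--Paley type with separated scales, namely the two translated indices among $\{1,\dots,J\}$ with the largest frequencies (there are at least two since $J\geq 2$). The factors $f_{j,k,\delta_j,n_j,t}$ have Fourier support in the annuli $|\xi|\sim 2^{m_jk}\delta_j$, and this is unaffected by the translation. I then apply Cauchy--Schwarz in $k$ to these two factors and take suprema over $k$ of the remaining ones:
\[
\Big|\sum_k\prod_j(\cdots)\Big|\leq \prod_{j\in\{a,b\}}\Big(\sum_k|f_{j,k}|^2\Big)^{1/2}\prod_{j\notin\{a,b\}}\sup_k|f_{j,k}| .
\]
H\"older with exponents $p_j$, followed by the bounds above, gives $\prod_{j\leq J}n_j\prod_j\|f_j\|_{p_j}$. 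Since $r\geq1$, no reverse square function or $H^p$ argument is needed, and the absolute values inside the sum are harmless.

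I expect the main obstacle to be justifying that this crude Cauchy--Schwarz bound loses nothing for any pair of square-function factors. The bound itself is valid for any two indices regardless of scale separation, because $\ell^2$ square functions of Littlewood--Paley type pieces are bounded for each translated index by Theorem \ref{LP}. The remaining subtlety is therefore to confirm that Corollary \ref{corLP} and the maximal bound for $\Psi$-pieces are uniform in $\delta_j$, $n_j$ and $t$. This holds because the implicit constants in Theorem \ref{LP} are independent of $\delta$, $\beta$ and $m_j$. The well-distributed order and the admissibility condition are used only to handle the degenerate finite part, which costs $O(n_J^*)$.
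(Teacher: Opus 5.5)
Your proposal is correct and is essentially the paper's argument: Minkowski in $t$ (valid since $r\geq 1$), Cauchy--Schwarz in $k$ on two translated indices and suprema on the remaining factors, then H\"older together with Theorem~\ref{LP} and Corollary~\ref{corLP}, which produces the factor $\prod_{j\le J}n_j$. Your separate treatment of the degenerate case is harmless but unnecessary, because, as you note yourself, the Cauchy--Schwarz bound needs no scale separation, and the paper applies it to arbitrary $\mathbb K$.
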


\begin{proof}
Since $J\geq 2$, we can choose  two distinct Littlewood--Paley-type indices $i_1, i_2\in \{1, \dots, J\}$ and write 
the paraproduct $ \Pi_{\delta_1, \dots, \delta_J}$ as
\begin{equation*}
\sum_{k\in\mathbb K} \int \rho^*(t) \!\!\!\prod_{j\in\{i_1, i_2\}}f_{j, k, \delta_j, n_j, t}(x)
\!\!\! \prod_{\substack{1\leq j\leq J \\ j\neq i_1, i_2}} f_{j, k, \delta_j, n_j, t}(x) \,\mathrm{d}t   \!\!\!\prod_{J+1\leq j\leq n}
 f_{j, k, \delta_j, \Psi}(x)\,.
\end{equation*}
Inserting the absolute values and then applying H\"older's inequality, we dominate $|\Pi_{\delta_1, \dots, \delta_J}|$
by 
$$
 \int\!\!\big| \rho^*(t)\big|  \!\! \!\!\!\prod_{j\in\{\!i_1, i_2\!\}}\!\!  \!\!\bigg(\!\!\sum_k\big| f_{j, k, \delta_j, n_j, t}(x)\big|^2\!\bigg)^{\!\frac{1}{2}} \!\!\!\!\!  \prod_{\substack{1\leq j\leq J \\ j\neq i_1, i_2}} \!\!\sup_k\big| f_{j, k, \delta_j, n_j, t}(x) \big|\mathrm{d}t 
  \!\!\!\!\prod_{j=J+1}^n \!\!\!\sup_k\big|  f_{j, k, \delta_j, \Psi}(x)\big|\,.
 $$
For $r\geq 1$, we use Minkowski's inequality to get 
\begin{eqnarray*}
  & & \big\| \Pi_{\delta_1, \dots, \delta_J}(f_1, \dots, f_n)\big\|_r\\
  &\leq &\!\! \!\!  \int\!\!\big| \rho^*(t)\big|  \bigg\|\!\!\prod_{j\in\{\!i_1, i_2\!\}}\!\!  \!\!\bigg(\!\!\sum_k\big| f_{j, k, \delta_j, n_j, t}\big|^2\!\bigg)^{\!\frac{1}{2}} \!\!\!\!\!  \prod_{\substack{1\leq j\leq J \\ j\neq i_1, i_2}} \!\!\sup_k\big| f_{j, k, \delta_j, n_j, t} \big|
  \!\!\!\!\prod_{j=J+1}^n \!\!\!\sup_k\big|  f_{j, k, \delta_j, \Psi}\big|\bigg\|_r\!\! \mathrm{d}t\\
 &\leq&  \prod_{j=1}^J n_j \int|\rho^*(t)| \,\mathrm{d}t\prod_{j=1}^n \|f_j\|_{p_j}\,.
  \end{eqnarray*}
In the last step, we applied H\"older's inequality with exponents $(p_1/r,\dots,p_n/r)$, together with Theorem \ref{LP} and Corollary \ref{corLP}. This completes the proof.  
\end{proof}

\begin{lemma}\label{lemP-02}
Suppose that there are two distinct Littlewood--Paley-type indices in the paraproduct $\Pi_0$ defined as in \eqref{def-Pi}.
 Then there exists a constant $C$, independent of $\delta_1, \dots, \delta_n$,  such that
\begin{equation*}
\big\| \Pi_{0}  (f_1, \dots, f_n) \big\|_r \leq C  \prod_{j=1}^n \big\|f_j\big\|_{p_j}\,,
\end{equation*}
 for all $p_1, \dots, p_n>1$ and $r\geq 1$, provided that $(p_1, \dots, p_n, r)$ obeys the H\"older relation 
\eqref{homo-r}.
\end{lemma}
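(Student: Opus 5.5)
The plan mirrors the proof of Proposition \ref{Lr>1}, with the translated square functions replaced by classical ones. Let $i_1\neq i_2$ be the two Littlewood--Paley-type indices in $\Pi_0$. For these indices, $f_{i,k,\delta_i,\Psi}=f_i*\Psi_{i,k,\delta_i}$ has Fourier support in the annulus $\{2^{m_ik}\delta_i\le|\xi|\le 2^{m_ik+1}\delta_i\}$. Choose a Schwartz function $\phi$ whose Fourier transform equals $1$ on $\{1\le|\xi|\le2\}$ and is supported in $\{1/2<|\xi|<3\}$, and set $\wh{\phi_{i,k}}(\xi)=\wh\phi(\xi/(2^{m_ik}\delta_i))$. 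Then $f_{i,k,\delta_i,\Psi}=(f_i*\phi_{i,k})*\Psi_{i,k,\delta_i}$. This gives the pointwise bound
\[
|f_{i,k,\delta_i,\Psi}|\lesssim M\big(f_i*\phi_{i,k}\big),
\]
uniformly in $k$ and $\delta_i$, since $\Psi_{i,k,\delta_i}$ is an $L^1$-normalized dilate of a fixed Schwartz function.

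Next, dominate pointwise:
\[
|\Pi_0(f_1,\dots,f_n)|\le \prod_{i\in\{i_1,i_2\}}\Big(\sum_k |f_{i,k,\delta_i,\Psi}|^2\Big)^{1/2}\prod_{j\ne i_1,i_2}\sup_k|f_{j,k,\delta_j,\Psi}|.
\]
This follows from Cauchy--Schwarz in $k$ applied to the two Littlewood--Paley factors, after pulling out the supremum of the remaining factors. For the remaining factors we use $\sup_k|f_j*\Psi_{j,k,\delta_j}|\lesssim Mf_j$, which is bounded on $L^{p_j}$ for $p_j>1$.

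For each Littlewood--Paley factor, the square function is controlled by the Fefferman--Stein vector-valued maximal inequality:
\[
\Big\|\Big(\sum_k|M(f_i*\phi_{i,k})|^2\Big)^{1/2}\Big\|_{p_i}\lesssim \Big\|\Big(\sum_k|f_i*\phi_{i,k}|^2\Big)^{1/2}\Big\|_{p_i}\lesssim \|f_i\|_{p_i}.
\]
The last inequality is the classical Littlewood--Paley estimate; it can also be read off from Theorem \ref{LP} with $\beta=0$, after replacing $\Phi$ by $\phi$, which is harmless because the proof only uses the annular support and Schwartz decay. These bounds are uniform in $\delta_i$ because the frequency sets $\{|\xi|\sim 2^{m_ik}\delta_i\}_k$ have bounded overlap for every dyadic $\delta_i$.

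Finally, apply H\"older's inequality with exponents $p_1,\dots,p_n$ and $r$, using the H\"older relation \eqref{homo-r}. This gives the bound $C\prod_j\|f_j\|_{p_j}$ with $C$ independent of the $\delta_j$, and in fact for every $r>0$, not only $r\ge1$.

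The argument is essentially routine. The only point requiring care is that the Littlewood--Paley-type hypothesis is applied to the already smoothed pieces $f_{i,k,\delta_i,\Psi}$ rather than to pieces of $f_i$ itself. The reproducing function $\phi$ handles this: it turns each $f_{i,k,\delta_i,\Psi}$ into a genuine Littlewood--Paley piece of $f_i$ convolved with a harmless averaging kernel. One must also check that the constants in all three estimates are uniform in $\delta_i$ and $k$, which holds because every kernel involved is a dilate of a fixed Schwartz function.
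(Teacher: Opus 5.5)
Your proposal is correct and is essentially the paper's argument. The paper proves this lemma ``in the same manner as Proposition~\ref{Lr>1}'', i.e.\ Cauchy--Schwarz in $k$ on the two Littlewood--Paley-type factors, a supremum/maximal-function bound on the remaining factors, and then H\"older with Littlewood--Paley and maximal function estimates. Your two additions are correct refinements of that same route: the reproducing function $\phi$ turns the smoothed pieces into genuine Littlewood--Paley pieces, and since $\Pi_0$ has no $t$-integral, no Minkowski step is needed, so the bound holds for the whole range $r>1/n$ allowed by $p_j>1$.
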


\begin{proof}
This can be done exactly in the same manner as in the proof of Proposition \ref{Lr>1}. We omit the details.
\end{proof}

\begin{lemma}\label{lemPi0}
Let $n \geq 2$, and let $\Pi_0$ be a paraproduct defined as in \eqref{def-Pi}. Suppose that there is a Littlewood--Paley-type index $\ell \in \{1,\dots,n\}$ associated with the $\ell$-th function in the paraproduct $\Pi_0$. Then
\begin{equation}\label{r-geq-1}
\bigl\|\Pi_0(f_1,\dots,f_n)\bigr\|_{r}
\lesssim
\prod_{j=1}^n \|f_j\|_{p_j},
\end{equation}
for all $p_1,\dots,p_n>1$ and $r\geq 1$ satisfying
\[
\frac{1}{r}=\frac{1}{p_1}+\dots+\frac{1}{p_n}.
\]
\end{lemma}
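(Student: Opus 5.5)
We prove \eqref{r-geq-1} by duality, reducing it to the case of two Littlewood--Paley-type indices treated in Lemma \ref{lemP-02}. Since $r\geq 1$, it suffices to bound the form $\int \Pi_0(f_1,\dots,f_n)\, g$ with $g\in L^{r'}$, $\|g\|_{r'}\leq 1$.

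First we reduce to a well-distributed index set. By Theorem \ref{de-K} we may assume $\mathbb K$ is well-distributed of order $K$, with $K$ large. If $\mathbb K$ is degenerate, so that $\#\mathbb K=O(1)$, then each summand is a product of $n$ convolutions with $L^1$-normalized kernels. H\"older's inequality then gives the bound directly. So we assume the ordering $2^{m_{i_1}k}\delta_{i_1}\preceq_K\cdots\preceq_K 2^{m_{i_n}k}\delta_{i_n}$.

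Next we decompose the non-Littlewood--Paley factors. For each $j\neq \ell$ we write $f_{j,k,\delta_j,\Psi}$ as a sum of its Littlewood--Paley pieces at scales $\leq 2^{m_jk+1}\delta_j$. With this decomposition the product $\prod_j f_{j,k,\delta_j,\Psi}$ has Fourier support governed by the largest frequency scale present. Let $j^*$ denote the index with the largest scale $2^{m_{j^*}k}\delta_{j^*}$, which is well defined thanks to the non-degenerate ordering.

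We then split into two cases.

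\emph{Case $j^*=\ell$.} The product $\prod_j f_{j,k,\delta_j,\Psi}$ has Fourier transform supported in an annulus of size $\sim 2^{m_\ell k}\delta_\ell$, because all other frequencies are at most $2^{-K}$ times smaller. We therefore insert a Littlewood--Paley projection $\tilde\Delta_k$ adapted to this annulus and move it onto $g$. The form becomes
\[
\sum_k\int f_{\ell,k}\,\Big(\prod_{j\neq\ell} f_{j,k,\delta_j,\Psi}\Big)\,\tilde\Delta_k g .
\]
We bound it by
\[
\int \Big(\sum_k|f_{\ell,k}|^2\Big)^{1/2}\Big(\sum_k|\tilde\Delta_k g|^2\Big)^{1/2}\prod_{j\neq\ell}Mf_j .
\]
H\"older's inequality, the classical Littlewood--Paley inequality (Theorem \ref{LP} with $\beta=0$), and the boundedness of $M$ then close the estimate. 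Here $g$ serves as the second Littlewood--Paley-type index, which is exactly the mechanism of Lemma \ref{lemP-02}.

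\emph{Case $j^*\neq\ell$.} The top factor $f_{j^*,k,\delta_{j^*},\Psi}$ is not itself Littlewood--Paley type. We split it as
\[
f_{j^*,k,\delta_{j^*},\Psi}=f_{j^*}*\Psi_{j^*,k,2^{-K}\delta'}+\big(f_{j^*,k,\delta_{j^*},\Psi}-f_{j^*}*\Psi_{j^*,k,2^{-K}\delta'}\big),
\]
where $\delta'$ is chosen so that the cutoff sits just above the second-largest scale.
\begin{itemize}
\item The second piece is of Littlewood--Paley type, up to a bounded number of dyadic pieces. Together with $\ell$ it gives two Littlewood--Paley-type indices, so Lemma \ref{lemP-02} applies.
\item The first piece lowers the top scale. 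The new largest index is the next one in the ordering, and we recurse.
\end{itemize}
The recursion has at most $n$ steps. It terminates either when the top index equals $\ell$, which is the previous case, or after producing only terms already controlled.

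\emph{Main obstacle.} The delicate step is making the split in the case $j^*\neq\ell$ so that the remainder is genuinely Littlewood--Paley type uniformly in $\delta_j$. The well-distributed gap $2^{-K}$ between consecutive scales is what makes this possible: the annular cutoffs can be placed in the gaps with bounded overlap.
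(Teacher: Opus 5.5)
There is a genuine gap in your case $j^*\neq\ell$. The claim that the remainder $f_{j^*,k,\delta_{j^*},\Psi}-f_{j^*}*\Psi_{j^*,k,2^{-K}\delta'}$ is ``of Littlewood--Paley type up to a bounded number of dyadic pieces'' is false.

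Let $j_2$ be the next index in the ordering. For your recursion to move the top scale down to the second one, the cutoff must sit at scale about $2^{m_{j_2}k}\delta_{j_2}$ for every $k$. (So $\delta'$ must in fact depend on $k$, since $m_{j^*}\neq m_{j_2}$.) The remainder then lives on $\{2^{m_{j_2}k}\delta_{j_2}\lesssim|\xi|\lesssim 2^{m_{j^*}k}\delta_{j^*}\}$. Well-distributedness only bounds the ratio $2^{(m_{j^*}-m_{j_2})k}\delta_{j^*}/\delta_{j_2}$ from below by $2^K$. On an infinite $\mathbb K$ this ratio is unbounded, so the remainder contains a number of dyadic pieces growing linearly in $|k|$, and Lemma~\ref{lemP-02} does not apply.

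Both natural repairs fail:
\begin{itemize}
\item Cutting the remainder into dyadic pieces and applying Lemma~\ref{lemP-02} to each gives uniformly bounded but non-decaying terms, unboundedly many of them.
\item Placing the cutoff just below the top scale does make the remainder Littlewood--Paley type, but then your recursion no longer reaches the second scale in finitely many steps.
\end{itemize}

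This band term is the real content of the lemma, and it needs a genuine paraproduct argument that your proposal does not contain. The paper writes the low-pass factor as $f_{i_1}-\sum_{h>m_{i_1}k+\log_2\delta_{i_1}}f_{i_1,h}$. The $f_{i_1}$ term is handled by H\"older and induction on $n$. For the tail, the $k$- and $h$-sums are interchanged, so that $f_{i_1,h}G_h$ is of Littlewood--Paley type in the new index $h$. Lemma~\ref{RSFE} then reduces matters to $\|(\sum_h|f_{i_1,h}|^2)^{1/2}\|_{p_{i_1}}\,\|\sup_h|G_h|\|_{r_1}$. The truncated maximal sums $\sup_h|G_h|$ are controlled by peeling off every index above $\ell$ in the same way, using Lemma~\ref{MaxRS} at each stage and at the end.

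Even in your own formulation you could not avoid this. Interchanging sums in your band term leaves two-sided truncations $\sup_h|\sum_{k_1(h)\le k\le k_2(h)}\cdots|$ of a lower-order paraproduct, and these must still be estimated by exactly this kind of argument.

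A smaller point concerns your case $j^*=\ell$. Your duality step uses $\|(\sum_k|\tilde\Delta_k g|^2)^{1/2}\|_{r'}\lesssim\|g\|_{r'}$, which fails at $r=1$, $r'=\infty$. The endpoint $r=1$ also cannot be reached by interpolating estimates with $r>1$. Apply Lemma~\ref{RSFE} directly instead, as the paper does. It holds for all $p\in(0,\infty)$ and gives the bound $\|(\sum_k|f_{\ell,k,\delta_\ell}|^2)^{1/2}\prod_{j\neq\ell}Mf_j\|_r$ without duality.
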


\begin{proof}
We proceed by induction on $n$. The case $n=2$ was proved in \cite{Li2} and serves as the base case of the induction. Assume that Lemma \ref{lemPi0} holds for all integers $\leq n$. We aim to prove the lemma for $n+1$.

Let $\Pi_0(f_1,\dots,f_{n+1})$ be a paraproduct in which the index $\ell\in\{1,\dots,n+1\}$ is of Littlewood--Paley type. We may further assume that $\ell$ is the only LP-type index; otherwise, the conclusion follows immediately from Lemma \ref{lemP-02}.

We write
\[
\Pi_0(f_1,\dots,f_{n+1})
=
\sum_{k\in\mathbb K}
f_{\ell,k,\delta_\ell}
\prod_{j\neq \ell} f_{j,k,\delta_j,\Psi}.
\]

By Theorem \ref{de-K}, we may assume that $\mathbb K$ is a non-degenerate well-distributed set of order $K$, where $K$ is sufficiently large, since the degenerate case is trivial.

Suppose first that, for every $k\in\mathbb K$, the Fourier support of $f_{\ell,k,\delta_\ell}$ has scale much larger than that of any other factor $f_{j,k,\delta_j,\Psi}$ in the paraproduct, namely, $2^{m_jk}\delta_j\preceq_K2^{m_\ell k}\delta_\ell$ for every $j\neq\ell$ and every $k\in\mathbb K$.  In this case, the key observation is that the collection of $\{f_{\ell,k,\delta_\ell}
\prod_{j\neq \ell} f_{j,k,\delta_j,\Psi}\}_k$ is of Littlewood--Paley type. Hence we can apply Lemma \ref{RSFE} to obtain
\begin{equation}\label{rev-sq}
\big\| \Pi_0(f_1,\dots,f_{n+1})\big\|_r\lesssim \bigg\|\bigg( \sum_k \big| f_{\ell,k,\delta_\ell}
\prod_{j\neq \ell} f_{j,k,\delta_j,\Psi}\big|^2\bigg)^{1/2} \bigg\|_r\,.
\end{equation}
Since each function $|f_{j,k,\delta_j,\Psi}|$ is bounded by $C M f_j$, where $M$ denotes the maximal operator, \eqref{rev-sq} yields the following estimate:
$$
\big\| \Pi_0(f_1,\dots,f_{n+1})\big\|_r\lesssim \bigg\|\bigg( \sum_k \big| f_{\ell,k,\delta_\ell}
\big|^2\bigg)^{1/2} \prod_{j\neq \ell} Mf_{j} \bigg\|_r\,.
$$
By H\"older's inequality, this is bounded by 
$$
\bigg\| \bigg( \sum_k\big| f_{\ell, k, \delta_\ell}\big|^2\bigg)^{1/2}\bigg\|_{p_\ell}
\!\!\prod_{\substack{j\neq \ell\\j\in\{1, \dots, n+1\}}}\!\!\!\!\!\! \big\| Mf_j\big\|_{p_j} \,.
$$
Thus,  the desired $(p_1,\dots,p_{n+1},r)$-estimate now follows by combining this estimate with the Littlewood--Paley and maximal function theories. \\

Otherwise, since $\mathbb K$ is non-degenerate and well-distributed, there exists an index $\ell'\in \{1, \dots, n+1\}\backslash\{\ell\}$ such that, for the relevant  $k\in\mathbb K$, the Fourier support of
$f_{\ell',k,\delta_{\ell'},\Psi}$ has scale  much larger than that of $f_{\ell, k, \delta_\ell}$. Let $\mathbb L$ denote the collection of all such indices $\ell'$, and set 
$$
\mathbb L_\ell^c=\big\{j\in \{1, \dots, n+1\}: j\neq\ell \,\, {\rm and}\,\, j\notin \mathbb L \big\}\,.
$$
Then we have the following representation:
\begin{equation*}
\Pi_0(f_1,\dots,f_{n+1}) = \sum_{k\in\mathbb K} \big(\prod_{j\in \mathbb L} f_{j, k, \delta_j, \Psi}\big) f_{\ell, k, \delta_\ell} \prod_{j\in \mathbb L_\ell^c} f_{j, k, \delta_j, \Psi}\,.
\end{equation*}

Write $ \mathbb L=\{i_1, \dots, i_l\}$ so that for any $q\in \{1, \dots, l-1 \}$,
\[
2^{m_{i_q}k}\delta_{i_q} \geq \max_{j\in\{i_{q+1},\dots,i_l\}}2^{m_jk}\delta_j\, ,
\]
that is, the scale of the Fourier support of $f_{i_q, k, \delta_{i_q}}$ is larger than those of $f_{j, k, \delta_{j}}$ for $j\in \{i_{q+1}, \dots, i_l\}$.
By the partition of unity generated by the function $\Phi$,  we can write
$$
f_{i_1, k, \delta_{i_1}, \Psi} = f_{i_1} - \sum_{\delta>{\delta_{i_1}}} f_{i_1, k, \delta}\,,
$$
which leads to 
$$
\Pi_0(f_1,\dots,f_{n+1}) =  \Pi_0^{(1)}(f_1,\dots,f_{n+1})-\Pi_0^{(2)}(f_1,\dots,f_{n+1}) \,,
$$
where 
\begin{equation*}
 \Pi_0^{(2)}(f_1,\dots,f_{n+1}) = \sum_{k}   \bigg( \sum_{\delta>\delta_{i_1}}f_{i_1, k, \delta}\bigg) \bigg(\prod_{\substack{j\in\mathbb L\\ j\neq i_1}}f_{j, k, \delta_j, \Psi} \bigg) f_{\ell, k, \delta_\ell} \prod_{j\in \mathbb L_\ell^c} f_{j, k, \delta_j, \Psi}
\end{equation*}
and
\begin{equation*}
    \Pi_0^{(1)}(f_1,\dots,f_{n+1}) = \Pi_0(f_1,\dots,f_{n+1})+ \Pi_0^{(2)}(f_1,\dots,f_{n+1})\,.
\end{equation*}
The desired estimates for $\Pi_0^{(1)}(f_1,\dots,f_{n+1})$ follow by applying  H\"older's inequality and the inductive hypothesis. \\

We now turn to the analysis of the most difficult term, $\Pi_0^{(2)}(f_1,\dots,f_{n+1})$. For $h\in \mathbb Z$ and $j\in \mathbb L$, 
we define $f_{j, h}$ by
\begin{equation*}
 \wh{f_{j, h}}(\xi) = \wh{f_j}(\xi) \wh\Phi\big( \frac{\xi}{2^h}\big)\,.
\end{equation*}
It is clear that, for $j\in\mathbb L$,
\begin{equation}\label{dec-j}
 \sum_{\delta>\delta_j}f_{j, k, \delta} = \sum_{h>m_j k+ \log_2\delta_j} f_{j, h}\,.
\end{equation}

Then we can write $\Pi_0^{(2)}(f_1,\dots,f_{n+1}) $ as 
$$
\sum_{h_1}f_{i_1, h_1} G_{h_1}\big(\{f_j\}_{j\in \mathbb L\backslash \{i_1\}}\big)\,,
$$
where 
$$
G_{h_1}\big(\{f_j\}_{j\in \mathbb L\backslash \{i_1\}}\big) :=            \!\!
\sum_{\substack{k\in\mathbb K\\ h_{1}>  m_{i_1}k+\log_2\delta_{i_1}
}} \bigg(\prod_{\substack{j\in\mathbb L\\ j\neq i_1}}f_{j, k, \delta_j, \Psi} \bigg)  f_{\ell, k, \delta_\ell} \prod_{j\in \mathbb L_\ell^c} f_{j, k, \delta_j, \Psi}\,.
$$
Notice that the $h_1$-summand, $f_{i_1, h_1}G_{h_1}$,   is of Littlewood--Paley type, because $\mathbb K$ is well-distributed. From  Lemma \ref{RSFE}, it follows that 
\begin{equation*}
    \big\| \Pi_0^{(2)}(f_1,\dots,f_{n+1})\big\|_r \lesssim \bigg\|  \big( \sum_{h_1} \big| f_{i_1, h_1}G_{h_1}\big|^2 \big)^{1/2}\bigg\|_r\,, 
\end{equation*}
which is bounded by 
\begin{equation*}
\bigg\| \big(\sum_{h_1}\big| f_{i_1, h_{1}} \big|^2\big)^{1/2}\bigg\|_{p_{i_1}} 
\bigg\| \sup_{h_1}\big| G_{h_1}\big|\bigg\|_{r_1} \lesssim \|f_{i_1}\|_{p_{i_1}} \bigg\|\sup_{h_1}\big| G_{h_1}\big|\bigg\|_{r_1} \,,
\end{equation*}
where $1/r_1=1/r-1/p_{i_1}$.  
We now write 
$$
f_{i_2, k, \delta_{i_2}, \Psi} = f_{i_2} - \sum_{\delta>{\delta_{i_2}}} f_{i_2, k, \delta}\,.
$$
Then we decompose $G_{h_1}$ into $G_{h_1, 1} - G_{h_1, 2}$, where
\begin{equation*}
G_{h_1, 1}= f_{i_2}\sum_{\substack{k\in\mathbb K\\ h_{1}> m_{i_1}k+\log_2\delta_{i_1}
}} \bigg(\prod_{\substack{j\in\mathbb L\\ j\neq i_1, i_2}}f_{j, k, \delta_j, \Psi} \bigg)  f_{\ell, k, \delta_\ell} \prod_{j\in \mathbb L_\ell^c} f_{j, k, \delta_j, \Psi}
\end{equation*}
and 
\begin{equation*}
G_{h_1, 2}= \sum_{\substack{k\in\mathbb K\\ h_{1}> m_{i_1}k+\log_2\delta_{i_1} 
}} 
 \bigg( \sum_{\delta>{\delta_{i_2}}} f_{i_2, k, \delta} \bigg)
\bigg(\prod_{\substack{j\in\mathbb L\\ j\neq i_1, i_2}}f_{j, k, \delta_j, \Psi} \bigg)  f_{\ell, k, \delta_\ell} \prod_{j\in \mathbb L_\ell^c} f_{j, k, \delta_j, \Psi}\,.
\end{equation*}
Applying H\"older's inequality to $G_{h_1, 1}$, we get 
\begin{equation*}
\bigg\|\sup_{h_1}\big| G_{h_1, 1}\big|\bigg\|_{r_1}\lesssim \big\|f_{i_2}\big\|_{p_{i_2}} \bigg\| \sup_{h_1}\bigg|G_{h_1}\big(\{f_j\}_{j\in \mathbb L\backslash \{i_1, i_2\}}\big)  \bigg| \bigg\|_{r_2}\,,
\end{equation*}
where $1/r_2=1/r_1-1/p_{i_2}$.  Using \eqref{dec-j} for $j=i_2$,  we see that 
$$
G_{h_1, 2} = \sum_{h_2} f_{i_2, h_2} \!\!\!\!\!\sum_{\substack{k\in\mathbb K\\ h_{1}>  m_{i_1}k+\log_2\delta_{i_1}\\ h_{2}>  m_{i_2}k+\log_2\delta_{i_2}
}} 
\bigg(\prod_{\substack{j\in\mathbb L\\ j\neq i_1, i_2}}f_{j, k, \delta_j, \Psi} \bigg)  f_{\ell, k, \delta_\ell} \prod_{j\in \mathbb L_\ell^c} f_{j, k, \delta_j, \Psi}\,.
$$
At this point, we partition the range of $h_2$ to clarify the dependence of the $k$-sum on $h_1$ and $h_2$.  In fact, we have 
$$
 G_{h_1, 2} = G_{h_1, 2}^{(1)} + G_{h_1, 2}^{(2)}\,, 
$$
where 
\begin{equation*}
    G_{h_1, 2}^{(1)}= \!\!\!\!\sum_{\substack{h_2\\ \frac{h_1-\log_2\delta_{i_1}}{m_{i_1}}\geq \frac{h_2-\log_2\delta_{i_2}}{m_{i_2}} } }\!\!\!\! f_{i_2, h_2} \!\!\!\!\!\sum_{\substack{k\in\mathbb K\\  h_{2}>  m_{i_2}k+\log_2\delta_{i_2}
}} \!\!\!\!
\bigg(\prod_{\substack{j\in\mathbb L\\ j\neq i_1, i_2}}f_{j, k, \delta_j, \Psi} \bigg)  f_{\ell, k, \delta_\ell}\!\! \prod_{j\in \mathbb L_\ell^c} f_{j, k, \delta_j, \Psi}
\end{equation*}
and 
\begin{equation*}
    G_{h_1, 2}^{(2)}= \!\!\!\!\sum_{\substack{h_2\\ \frac{h_1-\log_2\delta_{i_1}}{m_{i_1}}< \frac{h_2-\log_2\delta_{i_2}}{m_{i_2}} } }\!\!\!\! f_{i_2, h_2} \!\!\!\!\!\sum_{\substack{k\in\mathbb K\\ h_{1}>  m_{i_1}k+\log_2\delta_{i_1}
}} \!\!\!\!
\bigg(\prod_{\substack{j\in\mathbb L\\ j\neq i_1, i_2}}f_{j, k, \delta_j, \Psi} \bigg)  f_{\ell, k, \delta_\ell}\!\! \prod_{j\in \mathbb L_\ell^c} f_{j, k, \delta_j, \Psi}\,.
\end{equation*}

Applying Lemma \ref{MaxRS}, followed by H\"older's inequality and the Littlewood--Paley theory, to $G^{(1)}_{h_1,2}$, we arrive at
\begin{equation*}
\bigg\|\sup_{h_1}\big| G_{h_1, 2}^{(1)}\big|\bigg\|_{r_1}\lesssim \big\|f_{i_2}\big\|_{p_{i_2}} \bigg\| \sup_{h_2}\bigg|G_{h_2}\big(\{f_j\}_{j\in \mathbb L\backslash \{i_1, i_2\}}\big)  \bigg| \bigg\|_{r_2}\,,
\end{equation*}
where 
$$
G_{h_2}\big(\{f_j\}_{j\in \mathbb L\backslash \{i_1, i_2\}}\big) :=            \!\!
\sum_{\substack{k\in\mathbb K\\ h_{2}>  m_{i_2}k+\log_2\delta_{i_2}
}} \bigg(\prod_{\substack{j\in\mathbb L\\ j\neq i_1, i_2}}f_{j, k, \delta_j, \Psi} \bigg)  f_{\ell, k, \delta_\ell} \prod_{j\in \mathbb L_\ell^c} f_{j, k, \delta_j, \Psi}\,.
$$
In $G^{(2)}_{h_1,2}$, the $k$-sum is independent of $h_2$ and can therefore be taken outside the $h_2$-sum. Applying H\"older's inequality followed by Lemma \ref{MaxRS}, we obtain the following similar estimate:
\begin{equation*}
\bigg\|\sup_{h_1}\big| G_{h_1, 2}^{(2)}\big|\bigg\|_{r_1}\lesssim \big\|f_{i_2}\big\|_{p_{i_2}} \bigg\| \sup_{h_1}\bigg|G_{h_1}\big(\{f_j\}_{j\in \mathbb L\backslash \{i_1, i_2\}}\big)  \bigg| \bigg\|_{r_2}\,.
\end{equation*}

By iterating the above argument, we can successively remove all $f_j$, $j\in\mathbb L$, from the functions of $G_h$-type, thereby arriving at 
\begin{equation}\label{Pi0-G}
   \big\|  \Pi_0^{(2)}(f_1,\dots,f_{n+1})\big\|_r\lesssim \bigg( \prod_{j\in\mathbb L}\big\|f_{j}\big\|_{p_j}\bigg)\!\! \max_{1\leq j\leq l}
   \bigg\|\sup_{h_j}\bigg|\!\!\!\!\!\sum_{\substack{k\in\mathbb K\\ h_j> m_{i_j}k+\log_2\delta_{i_j} }} \!\!\!\!\!\!\!\!\!f_{\ell, k, \delta_\ell} \prod_{j\in \mathbb L_\ell^c} f_{j, k, \delta_j, \Psi} \bigg|\bigg\|_{p}\,, 
\end{equation}
where $1/p=1/r-\sum_{j\in\mathbb L}1/p_j$. Observe that, for every $k\in\mathbb K$, the Fourier support of $f_{\ell,k,\delta_\ell}$ has scale much larger than that of any other factor $f_{j,k,\delta_j,\Psi}$ with $j\in\mathbb L_\ell^c$ appearing in the paraproduct. By Lemma \ref{MaxRS}, the second factor involving $f_\ell$ on the right-hand side of (\ref{Pi0-G}) is controlled by
\begin{equation}\label{ell-G}
 \bigg\|\big(\sum_{k\in\mathbb K} \big|f_{\ell, k, \delta_\ell} \prod_{j\in \mathbb L_\ell^c} f_{j, k, \delta_j, \Psi} \big|^2\big)^{1/2}\bigg\|_{p}\,,
\end{equation}
which, by an application of H\"older's inequality,  the  Littlewood--Paley theory and the maximal function theory, is further dominated by
$$
\bigg\|\big(\sum_{k\in\mathbb K} \big|f_{\ell, k, \delta_\ell} \big|^2\big)^{1/2}\prod_{j\in \mathbb L_\ell^c} Mf_{j} \bigg\|_{p}\lesssim  \|f_\ell\|_{p_\ell}\prod_{j\in \mathbb L^c_\ell}\|f_j\|_{p_j}\,
$$
as desired.  Therefore, the proof is complete.
\end{proof}

Combining Proposition \ref{Lr>1}, Lemma \ref{lemP-02}, and Lemma \ref{lemPi0}, we obtain Theorem \ref{para} in the Banach-space case, namely,
\begin{proposition}
Theorem \ref{para} holds for $r\geq 1$.
\end{proposition}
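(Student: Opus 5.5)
This proposition is essentially a bookkeeping statement: it follows by combining the three Banach-range results just established. The plan is to split according to the value of $J\in\{0,2,\dots,n\}$ and, within each case, according to whether the index set is degenerate.

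First, dispose of the degenerate case. If $\Pi_{\delta_1,\dots,\delta_J}$ is admissible because $\#\mathbb K\le C_n$, or because $\mathbb K$ is a degenerate well-distributed set with $\#\mathbb K=O_n(K)$, treat each $k$-summand separately. For fixed $k$ and $r\ge1$, apply Minkowski's inequality in $t$ and then H\"older's inequality with exponents $(p_1/r,\dots,p_n/r)$. Each factor $f_{j,k,\delta_j,n_j,t}$ or $f_{j,k,\delta_j,\Psi}$ is a translate of a convolution with an $L^1$-normalized Schwartz function, so its $L^{p_j}$ norm is at most $C\|f_j\|_{p_j}$. Summing over $O_n(K)$ terms with the triangle inequality costs a factor $K\lesssim n_J^*+100\max\{n,m_1,\dots,m_n\}$. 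This is at most $C_\varepsilon 2^{\varepsilon n_J^*}$, with $C_\varepsilon$ depending on $n$ and the $m_j$, which is allowed.

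Next, take $J\ge2$ with $\mathbb K$ non-degenerate. Invoke Proposition \ref{Lr>1}. Its proof only needs the square function bound of Theorem \ref{LP} for two indices and Corollary \ref{corLP} for the rest, and both hold for every $k\in\mathbb K$. It gives the bound $C\prod_{j=1}^J n_j$. Because Theorem \ref{LP} actually loses $\log(10+2^{n_j})\lesssim 1+n_j$, the loss is at most $\prod_{j\le J}(1+n_j)\le (1+n_J^*)^J\lesssim_{\varepsilon,J}2^{\varepsilon n_J^*}$. This yields the required form, with $C$ independent of the $\delta_j$ and the $n_j$.

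Finally, take $J=0$. Admissibility supplies at least one Littlewood--Paley-type index $i_1$. If there are two such indices, Lemma \ref{lemP-02} applies. If there is exactly one, Lemma \ref{lemPi0} applies. In both cases there is no $n_J^*$ loss, since $n_0^*=0$.

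The only real point to check is uniformity. I would go back to Lemma \ref{lemPi0} and confirm that its implicit constant is independent of $\delta_1,\dots,\delta_n$. This holds because it relies only on Lemmas \ref{RSFE} and \ref{MaxRS}, whose constants are independent of the dilation parameters, together with the classical Littlewood--Paley and maximal inequalities. I would also confirm that the well-distributedness of order $K$ used there is exactly what admissibility provides. This is the step I expect to need care, though no new estimate.
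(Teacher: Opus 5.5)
Your proposal is correct and matches the paper, whose proof simply combines Proposition \ref{Lr>1} for $J\geq 2$ with Lemmas \ref{lemP-02} and \ref{lemPi0} for $J=0$. Your separate triangle-inequality treatment of bounded or degenerate index sets, and the conversion $\prod_{j\le J} n_j\lesssim_{\varepsilon} 2^{\varepsilon n_J^*}$, only make explicit what the paper leaves implicit; for $J\geq 2$ the former is not even needed, since the argument of Proposition \ref{Lr>1} uses no structure of $\mathbb K$.
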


\section{Proof of Theorem \ref{para}: quasi-Banach cases}\label{Pf-para}

In this section we consider the boundedness in the quasi-Banach spaces $L^r$. 

\subsection{The untranslated paraproducts}

The simplest untranslated paraproduct is the one for which there exists a Littlewood–Paley index $\ell$ such that, for every $k\in\mathbb K$, the Fourier support of $f_{\ell,k,\delta_\ell}$ has length much larger than that of any other factor $f_{j,k,\delta_j,\Psi}$ in the paraproduct. We call such a paraproduct {\it regular},  and the index $\ell$ is called a dominant index.

\begin{lemma}
Let $\Pi_0$ be a regular paraproduct.  Then 
\begin{equation}\label{reg}
 \big\|\Pi_0(f_1, \dots, f_n)\big\|_r\lesssim \prod_{j=1}^n\big\|f_j\big\|_{p_j}\,
\end{equation}
holds for all $p_1, \dots, p_n>1$ and $r>1/n$ with $1/r=\sum_{j=1}^n 1/p_j$. 
\end{lemma}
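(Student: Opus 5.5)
The plan follows the first case in the proof of Lemma \ref{lemPi0}. The only change is that Lemma \ref{RSFE}, Hölder's inequality and the Fefferman--Stein inequality are all valid for every exponent in $(0,\infty)$, so nothing restricts us to $r\geq 1$.

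Let $\ell$ be the dominant index. By regularity, for each $k\in\mathbb K$ the Fourier transform of $f_{\ell,k,\delta_\ell}$ is supported in $\{|\xi|\sim 2^{m_\ell k}\delta_\ell\}$. Every other factor $f_{j,k,\delta_j,\Psi}$ has Fourier support in $\{|\xi|\lesssim 2^{m_jk}\delta_j\}$, and $2^{m_jk}\delta_j\preceq_K 2^{m_\ell k}\delta_\ell$ with $K$ large. So the Fourier support of the product
\[
F_k:=f_{\ell,k,\delta_\ell}\prod_{j\neq\ell} f_{j,k,\delta_j,\Psi}
\]
lies in a slightly enlarged annulus $\{\tfrac14 2^{m_\ell k}\delta_\ell\leq|\xi|\leq 4\cdot 2^{m_\ell k}\delta_\ell\}$. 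After splitting $\mathbb K$ into $O(1)$ residue classes mod a fixed integer (harmless by the $r$-triangle inequality), $\{F_k\}$ is of Littlewood--Paley type with parameters $m_\ell$ and $\delta_\ell$, up to this harmless enlargement. Lemma \ref{RSFE} then gives, for every $r>0$,
\[
\|\Pi_0(f_1,\dots,f_n)\|_r\lesssim \Big\|\Big(\sum_k |F_k|^2\Big)^{1/2}\Big\|_r .
\]
Lemma \ref{RSFE} is stated for exact dyadic annuli. The enlargement only changes the choice of $\phi$ in its proof, so the same argument applies verbatim.

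Next I bound $|f_{j,k,\delta_j,\Psi}|\lesssim Mf_j$ pointwise, uniformly in $k$ and $\delta_j$. This gives
\[
\Big(\sum_k|F_k|^2\Big)^{1/2}\lesssim S_\ell f_\ell\prod_{j\neq\ell}Mf_j,
\qquad\text{where}\quad S_\ell f_\ell=\Big(\sum_k|f_{\ell,k,\delta_\ell}|^2\Big)^{1/2}.
\]
Hölder's inequality with exponents $p_1,\dots,p_n$ and $1/r=\sum 1/p_j$ is valid for any $r>0$. Since all $p_j>1$, the Littlewood--Paley bound $\|S_\ell f_\ell\|_{p_\ell}\lesssim\|f_\ell\|_{p_\ell}$ (uniform in $\delta_\ell$) and the maximal bound $\|Mf_j\|_{p_j}\lesssim\|f_j\|_{p_j}$ finish the proof. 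The constraint $r>1/n$ is exactly the constraint that every $p_j>1$.

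The only delicate point is the Fourier-support claim for $F_k$: checking that the low-frequency factors cannot shift the support of $f_{\ell,k,\delta_\ell}$ back to the origin. This holds because the order $K$ of well-distribution is large (at least $100\max\{n,m_j\}$), so the sum of all the other frequency scales is at most $n2^{-K}2^{m_\ell k}\delta_\ell\ll 2^{m_\ell k}\delta_\ell$. One also needs a.e. convergence of $\sum_k F_k$ to apply Lemma \ref{RSFE}. I would handle this by first working with finite subsets of $\mathbb K$ and passing to the limit via Fatou's lemma, as in the proof of that lemma.
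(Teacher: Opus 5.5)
Your proposal is correct and follows the paper's proof. The paper likewise applies Lemma \ref{RSFE} to pass to the square function of the products $f_{\ell,k,\delta_\ell}\prod_{j\neq\ell}f_{j,k,\delta_j,\Psi}$, then reuses the argument for \eqref{ell-G}: maximal-function domination of the non-dominant factors, H\"older's inequality, and the Littlewood--Paley and maximal bounds. Your extra bookkeeping (the enlarged annulus, the residue classes, truncation plus Fatou) only makes explicit what the paper leaves implicit; one small note is that $r>1/n$ follows from all $p_j>1$ but does not imply it, which is harmless since the lemma assumes $p_j>1$.
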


\begin{proof}
Without loss of generality, we assume that $1$ is the dominant index.  From Lemma \ref{RSFE}, it follows 
that 
\begin{equation}\label{eqn-r-para}
\big\|\Pi_0(f_1, \dots, f_n)\big\|_r\lesssim 
 \bigg\|\bigg(\sum_{k\in\mathbb K} \big|f_{1, k, \delta_1, \Psi} \prod_{j=2}^n f_{j, k, \delta_j, \Psi} \big|^2\bigg)^{1/2}\bigg\|_{r}\,.
\end{equation}
Since (\ref{eqn-r-para}) is of the same form as (\ref{ell-G}), the same argument as in the proof of Lemma \ref{lemPi0} yields the desired estimate.
\end{proof}

\begin{lemma}
Let $r=1/n+\sigma$, where $\sigma$ is a sufficiently small positive number. Let $\Pi_0$ be an admissible  paraproduct.  Then  
\begin{equation}\label{near1-n}
 \big\|\Pi_0(f_1, \dots, f_n)\big\|_r\lesssim \prod_{j=1}^n\big\|f_j\big\|_{p_j}\,
\end{equation}
holds for all $p_1, \dots, p_n>1$ and $r>1/n$ with $1/r=\sum_{j=1}^n 1/p_j$. 
\end{lemma}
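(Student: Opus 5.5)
We prove \eqref{near1-n} by running the inductive scheme of Lemma \ref{lemPi0} with every Banach-space step replaced by a quasi-Banach substitute. The substitutes are the reverse square function estimate (Lemma \ref{RSFE}) and the maximal reverse square function estimate (Lemma \ref{MaxRS}), both valid for all $p\in(0,\infty)$. The aim is to show that no step of that argument used $r\geq1$ in an essential way.

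By Theorem \ref{de-K} we may assume $\mathbb K$ is well-distributed. If $\mathbb K$ is degenerate, it has $O(1)$ elements. Each summand is then bounded pointwise by $\prod_j Mf_j$, and H\"older's inequality together with the quasi-triangle inequality gives the estimate for every $r>1/n$. So let $\mathbb K$ be non-degenerate, with a Littlewood--Paley index $\ell$ and an ordering of the scales $2^{m_jk}\delta_j$ that is the same for all $k\in\mathbb K$.

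If $\ell$ dominates every other index, the paraproduct is regular and the preceding lemma applies. Otherwise let $\mathbb L=\{i_1,\dots,i_l\}$ be the indices whose scales exceed that of $\ell$, ordered decreasingly. As in Lemma \ref{lemPi0}, write $f_{i_1,k,\delta_{i_1},\Psi}=f_{i_1}-\sum_{\delta>\delta_{i_1}}f_{i_1,k,\delta}$, which splits $\Pi_0=\Pi_0^{(1)}-\Pi_0^{(2)}$.

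The term $\Pi_0^{(1)}$ equals $f_{i_1}$ times an admissible $(n-1)$-linear paraproduct; the Littlewood--Paley index $\ell$ survives. We treat it by induction on $n$. H\"older's inequality in $L^r$ with exponents $p_{i_1}$ and $r'$, where $1/r'=1/r-1/p_{i_1}$, reduces it to the $(n-1)$-linear bound at $r'$. Since $r'>1/(n-1)$ exactly when $\sum_{j\neq i_1}1/p_j<n-1$, which holds because every $p_j>1$, the induction closes. The base case $n=2$ is covered by \cite{Li2}.

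For $\Pi_0^{(2)}$ we write it as $\sum_{h_1}f_{i_1,h_1}G_{h_1}$. Each summand is of Littlewood--Paley type at scale $2^{h_1}$, since well-distribution places $G_{h_1}$ at much lower frequency. Lemma \ref{RSFE} in $L^r$, followed by H\"older's inequality, then gives
\[
\|\Pi_0^{(2)}\|_r\lesssim \|f_{i_1}\|_{p_{i_1}}\big\|\sup_{h_1}|G_{h_1}|\big\|_{r_1},\qquad \frac1{r_1}=\frac1r-\frac1{p_{i_1}}.
\]
We then peel off $i_2,\dots,i_l$ in the same way. At each step we split $G_{h_1,2}$ into $G^{(1)}$ and $G^{(2)}$ according to whether the $h_1$ or $h_2$ constraint is active, and use Lemma \ref{MaxRS} in the exponent $r_q$. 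That lemma holds for every $p>0$, so no Banach property is needed. We arrive at \eqref{Pi0-G}, in which a maximal truncated sum of the regular paraproduct involving $f_\ell$ and $f_j$, $j\in\mathbb L^c_\ell$, is measured in $L^p$ with $1/p=1/r-\sum_{j\in\mathbb L}1/p_j$. Lemma \ref{MaxRS} bounds this by the square function \eqref{ell-G}. H\"older's inequality, the $L^{p_\ell}$ Littlewood--Paley inequality for $f_\ell$, and $\|Mf_j\|_{p_j}$ bounds for $j\in\mathbb L^c_\ell$ then finish the argument.

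The main obstacle is the exponent bookkeeping near $r=1/n$. Every intermediate exponent $r_q$ and $p$ may be below $1$, so each step must use only Lemmas \ref{RSFE} and \ref{MaxRS} together with H\"older's inequality, never Minkowski's inequality. The final product estimate needs every $p_j>1$ for the maximal and Littlewood--Paley bounds, and this is automatic. A second point to check is that each intermediate summand $f_{i_q,h_q}\cdot(\cdots)$ really is of Littlewood--Paley type in $h_q$. This requires the order $K$ of well-distribution to exceed a constant depending on $n$ and $\max_j m_j$, which the admissibility hypothesis guarantees. Given both points, the estimate holds for every $r=1/n+\sigma$, and hence on the whole range $r>1/n$.
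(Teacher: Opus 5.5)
Your proposal is correct and is essentially the paper's proof: you rerun the induction of Lemma \ref{lemPi0}, observing that Lemmas \ref{RSFE} and \ref{MaxRS} and H\"older's inequality hold for all exponents, so no step needs $r\geq 1$. The only difference is bookkeeping. You close the induction via the automatic bound $r'>1/(n-1)$ with a full-range hypothesis at level $n-1$, whereas the paper notes that $1/r-\sum_{m=1}^{l}1/p_{i_m}=n-l+O(\sigma)$, so the near-endpoint statement at lower multilinearity suffices.
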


\begin{proof}
Notice that when $r$ is sufficiently close to $1/n$, each $p_j$ is sufficiently close to $1$. Consequently,
$$
 1/r-\sum_{m=1}^{l}1/p_{i_m} = n-l +O(\sigma)\,.
$$
This observation makes the induction possible, allowing us to proceed by following the same argument as in the proof of Lemma \ref{lemPi0}. We are then reduced to estimating (\ref{Pi0-G}). This estimate was established in the proof of Lemma \ref{lemPi0}, using Lemma \ref{MaxRS}, H\"older's inequality, the Littlewood--Paley theory, and the maximal function estimates. Therefore, the proof is complete.
\end{proof}

\begin{proposition}
Let $\Pi_0$ be an admissible paraproduct. Then
\begin{equation*}
\bigl\|\Pi_0(f_1,\dots,f_n)\bigr\|_{r}
\lesssim
\prod_{j=1}^n \|f_j\|_{p_j}
\end{equation*}
holds for all $p_1,\dots,p_n>1$ and $r>1/n$ satisfying
\[
\frac{1}{r}=\frac{1}{p_1}+\dots+\frac{1}{p_n}.
\]
\end{proposition}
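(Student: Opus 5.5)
The natural route is multilinear interpolation between two families of estimates that are already in hand. The first is the Banach case $r\geq 1$, given by Lemma \ref{lemPi0}. The second is the near-endpoint case $r=1/n+\sigma$, given by the preceding lemma, \eqref{near1-n}. Since the constants must be uniform in $\delta_1,\dots,\delta_n$ and in $\mathbb K$, I would first reduce to the two types of admissible paraproduct. If $\mathbb K$ has cardinality $O_n(1)$, I would use the quasi-triangle inequality in $L^r$, then Hölder, then the pointwise bound $|f_{j,k,\delta_j,\Psi}|\lesssim Mf_j$ (and likewise for the Littlewood--Paley type factor). This gives the bound for every $r>1/n$ with $p_j>1$, at a cost of only $O_n(1)$. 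Otherwise $\mathbb K$ is non-degenerate and well-distributed of large order, and there is at least one Littlewood--Paley type index.

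For the non-degenerate case, the plan is as follows. Fix a target tuple $(1/p_1,\dots,1/p_n,1/r)$ with $p_j>1$ and $1/n<r<1$. I would write this point as a convex combination of two points. One point $P_0$ lies in the region $r_0\geq 1$, $p_{j,0}>1$. The other point $P_1$ has $r_1=1/n+\sigma$, with $\sigma$ small enough that the lemma near $1/n$ applies, and all $p_{j,1}$ close to $1$ but still greater than $1$.

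This decomposition is possible. The set of admissible tuples $\{(1/p_j): 0<1/p_j<1,\ \sum 1/p_j>1\}$ is open and convex. Given a target $x$ with $\sum x_j<n$, I would take $P_1=x+\lambda(\mathbf 1-x)$ with $\lambda$ close to $1$, so that $\sum (P_1)_j=n-O(\sigma)$ and each coordinate stays below $1$. I would then take $P_0$ on the ray from $P_1$ through $x$, just past $x$, with its coordinates still in $(0,1)$ and its sum at most $1$. If the sum cannot be brought down to $1$ without leaving the cube, I would shrink $P_0$ further toward the origin inside the cube, which stays in the convex range. Multilinear (real or complex) interpolation for $n$-linear operators then yields the bound at $x$. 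In the quasi-Banach setting I would use the restricted weak-type interpolation already used in Section \ref{single}, followed by the standard upgrade to strong type on the open interior.

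The main obstacle is the legitimacy of interpolation. The lemma near $1/n$ is stated only for $r=1/n+\sigma$ with $\sigma$ small. It therefore gives estimates on a thin slab, and one must check that this slab supplies an $n$-simplex's worth of points, so that the interpolation covers the open target region and not just a segment. I would handle this by noting that the slab contains an open set of $(1/p_j)$ tuples near $(1,\dots,1)$, and the Banach lemma covers an open set near $\sum 1/p_j\le 1$. The convex hull of two open sets is open and contains every target point, so multilinear interpolation (for example, the multilinear Marcinkiewicz theorem in restricted weak-type form, with a uniform constant on compact subsets) gives the claim.
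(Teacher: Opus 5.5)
Your route is the paper's route: the paper's whole proof is the sentence that the proposition is immediate from interpolation between \eqref{r-geq-1} and \eqref{near1-n}. The geometric step you add to make that interpolation work is false for every $n\ge3$.

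Write $y_j=1/p_j$, so the target is the whole open cube $(0,1)^n$. The two known families cover the following sets.
\begin{itemize}
\item Estimate \eqref{r-geq-1} covers $A=\{y\in(0,1)^n:\sum_j y_j\le 1\}$.
\item Estimate \eqref{near1-n} covers only tuples with $\sum_j y_j>n-\tau$, $\tau$ small. Since every $y_j<1$, this forces $y_j>1-\tau$ for all $j$, so it is a small corner $B$ at the vertex $\mathbf{1}$.
\end{itemize}
Both sets are convex, so their hull consists of points $x=\theta a+(1-\theta)b$ with $a\in A$, $b\in B$. For such $x$ we have $x_j\ge(1-\theta)(1-\tau)$ for every $j$ and $\sum_j x_j\le\theta+(1-\theta)n$. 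Together these give
\[
\sum_j x_j\le 1+\frac{(n-1)\min_j x_j}{1-\tau}.
\]
Take $n=3$ and $x=(1-\eta,1-\eta,\eta)$ with $\eta$ small; then $r=1/(2-\eta)$, and this tuple even lies in $\mathbb P_3$. The left side is $2-\eta$ and the right side is about $1+2\eta$, so no interpolation between these two families reaches $x$.

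Your ray construction breaks exactly there. The third coordinate of $P_0$ hits $0$ while its coordinate sum is still about $2-2\eta$. Your patch of pushing $P_0$ toward the origin takes it off the line through $P_1$ and $x$, so $x$ is no longer a convex combination of $P_0$ and $P_1$. Your claim that the hull of the two open sets contains every target point holds only for $n=2$, where the remaining vertices $e_1,e_2$ of the square lie in $\overline{A}$. For $n\ge3$ the vertices $\sum_{j\in S}e_j$ with $2\le|S|\le n-1$ are far from $A\cup B$.

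What is missing is estimates near those vertices. The simplest fix is to drop interpolation altogether, because the inductive proof of Lemma \ref{lemPi0} never really uses $r\ge1$:
\begin{itemize}
\item Lemmas \ref{RSFE} and \ref{MaxRS} hold for all $p\in(0,\infty)$.
\item The bound $|f_{j,k,\delta_j,\Psi}|\lesssim Mf_j$ and H\"older's inequality work equally well for quasi-norms.
\item The two-Littlewood--Paley-index case (Lemma \ref{lemP-02}) is pointwise Cauchy--Schwarz in $k$ followed by H\"older, since $\Pi_0$ has no $t$-integral.
\item Splitting $\mathbb K$ into $O(1)$ well-distributed pieces costs only the quasi-triangle constant.
\end{itemize}
So you can run that induction with the full-range hypothesis ($r>1/n$ for admissible $n$-linear $\Pi_0$). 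Start from the one-function case $\sum_{k\in\mathbb K}f_{\ell,k,\delta_\ell}$, which is a Mikhlin multiplier bounded on $L^p$, $1<p<\infty$, uniformly in $\mathbb K$ and $\delta_\ell$. Each inductive step applies the hypothesis at $1/r_1=1/r-1/p_{i_1}$, which is below the number of remaining functions, as required. This gives the proposition for all $r>1/n$ directly.

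Alternatively, run the same induction near each vertex $\sum_{j\in S}e_j$, taking $p_j\approx1$ for $j\in S$ and $p_j\approx\infty$ otherwise. Neighborhoods of all vertices do have the open cube as their convex hull, so your interpolation then goes through. The paper's one-line proof does not address this geometry, so the extra step is needed there as well.
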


\begin{proof}
This is immediate from interpolation between \eqref{r-geq-1} and \eqref{near1-n}.
\end{proof}

\subsection{The translated paraproducts}

In this subsection, we use Lemma \ref{MaxRS} to establish the desired estimates for the $J$-translated paraproduct over the  range of $r>r_J$. We use $\Id_F$ to denote the indicator function of the measurable set $F$. To establish Theorem \ref{para}, we first prove the following restricted-type estimates.

\begin{proposition}
Let $r=r_J+\sigma$, where $\sigma$ is a sufficiently small positive number. 
\begin{itemize}
\item 
If $J=2$,  
then for any $\e>0$,
\begin{equation}\label{r_J-est}
 \big\| \Pi_{\delta_1, \dots, \delta_J} (\Id_{F_1}, \dots, \Id_{F_n})\big\|_{r} \lesssim 2^{\e n_J^*}
 \prod_{j=1}^{n} \big| F_j\big|^{1/p_j}
\end{equation}
holds for all $p_1, \dots, p_n>1$ and $r>r_J$ such that 
\begin{equation*}
\frac{1}{r}= \frac{1}{p_{1}} +\dots +\frac{1}{p_n}\,. 
\end{equation*}

\item If $J \geq 3$, let $i_1, i_2\in \{1, \dots , J\}$ be any two distinct indices. 
Then for any $\e>0$,
\begin{equation}\label{r_J-est>2}
 \big\| \Pi_{\delta_1, \dots, \delta_J} (\Id_{F_1}, \dots, \Id_{F_n})\big\|_{r} \lesssim 2^{\e n_J^*}\big|F_{i_1}\big|^{\frac{1}{p_{i_1}}}
 \big| F_{i_2}\big|^{\frac{1}{p_{i_2}}}  \!\!
 \prod_{j=J+1}^n \!\big| F_j\big|^{1/p_j}
\end{equation}
holds for all $p_{i_1}, p_{i_2}, p_{J+1}, \dots, p_n>1$ and $r>r_J$ such that 
$$
\frac{1}{r}= \frac{1}{p_{i_1}} +\frac{1}{p_{i_2}}+\sum_{j=J+1}^n\frac{1}{p_j}\,. 
$$
\end{itemize}
\end{proposition}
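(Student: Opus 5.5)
I would prove both bullets in the same way, with $r$ close to $r_J$. The basic tool is the estimate \eqref{Pi0-G} from the untranslated case, combined with Theorem~\ref{LP}, Corollary~\ref{corLP}, and the reverse square-function Lemma~\ref{RSFE}. The point is to handle the $t$-integral without Minkowski's inequality, which fails for $r<1$.

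\textbf{Step 1: reduce to fixed $t$.} Since $\rho^*$ is supported in $1/2<|t|<2$, I would discretize $t$ on a grid of mesh $2^{-Cn_J^*}$. On each cell the translations $2^{n_j}2^{-m_jk}\delta_j^{-1}t^{m_j}$ vary by at most $O(2^{-m_jk}\delta_j^{-1})$, which is harmless after Taylor expansion or Fourier series in the shift, as in the $J=1$ reduction. The $L^r$ quasi-norm of the sum over the $2^{Cn_J^*}$ cells costs a factor $2^{Cn_J^*(1/r-1)}$, which is too large. So instead I would keep the $t$-integral and work with indicator inputs.

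With $f_j=\Id_{F_j}$, I would restrict to an exceptional-set-free region. Let
\[
\Omega=\bigcup_j\Big\{M\Id_{F_j}>C|F_j|\,|F_{i_1}|^{-1}\Big\}\cup\Big\{\sup_k|f_{j,k,\delta_j,n_j,t}|\ \text{large}\Big\},
\]
with the thresholds normalized by $|F_{i_1}|$. Outside $\Omega$, all the factors with $j\notin\{i_1,i_2\}$ are pointwise bounded by $\min(1,|F_j|/|F_{i_1}|)$. This bound holds uniformly in $t$, by Corollary~\ref{corLP}, at a cost of $\log(10+2^{n_j})\lesssim n_j\le 2^{\e n_J^*}$. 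On $\Omega^c$ I would then apply Hölder with $L^1$ in $t$, which is allowed pointwise, and use the $L^2$-type bound for the product of the two square functions of $f_{i_1}$ and $f_{i_2}$ given by Theorem~\ref{LP}. This yields an $L^1(\Omega^c)$ bound, and then an $L^r$ bound by Hölder since $|\Omega^c\cap\mathrm{supp}|$ is controlled. The contribution on $\Omega$ is handled by $|\Omega|^{1/r}$, which is acceptable for restricted-type estimates.

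\textbf{Step 2, $J\ge3$.} Here I would only keep $f_{i_1},f_{i_2}$ and the untranslated $f_j$, $j>J$. The other translated factors are bounded by $1$ in $L^\infty$, which is why those indices do not appear on the right of \eqref{r_J-est>2}. The untranslated factors are peeled off exactly as in \eqref{Pi0-G}, using Lemma~\ref{MaxRS}. This reduces matters to a bilinear form in $f_{i_1},f_{i_2}$, which is controlled by Cauchy--Schwarz in $k$ and the two translated square functions. That argument yields only $r>1/2$, consistent with $r_J=1/2$.

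\textbf{Step 3, $J=2$, reaching $r>1/n$.} Here all the indices $j\ge3$ are untranslated. I would run the induction of Lemma~\ref{lemPi0} on them with $r$ near $1/n$, as in the proof of \eqref{near1-n}: decompose $f_{j,k,\delta_j,\Psi}=f_j-\sum_{\delta>\delta_j}f_{j,k,\delta}$ and apply Lemma~\ref{MaxRS} to peel off each $f_j$ in turn. The remaining object is the bilinear translated piece, for which Cauchy--Schwarz and Theorem~\ref{LP} hold in $L^{p}$ with $1/p$ near $2$, provided the $t$-integral is placed inside.

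The main obstacle is exchanging the $t$-integral with the maximal functions $\sup_h$ of Lemma~\ref{MaxRS} in the quasi-Banach range. I would try first to bound $\sup_h|\int\cdots dt|\le\int\sup_h|\cdots|\,dt$ pointwise. Then I would dominate the translated square functions by untranslated ones composed with shifted maximal operators, at the cost of $2^{\e n_J^*}$, before taking $L^r$ norms.
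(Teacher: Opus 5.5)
\textbf{There is a genuine gap at the heart of the argument.} Nothing in your proposal actually controls the $t$-integral in $L^r$ for $r<1$, and that is the entire content of this proposition.

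The tools you use are Theorem \ref{LP} and Corollary \ref{corLP} at fixed $t$, Cauchy--Schwarz in $k$, and H\"older in $t$. They give $\|S_{i,t}\|_{q}\lesssim n_i\|f_i\|_{q}$ for the translated square function $S_{i,t}$ at each fixed $t$. Transferring this to $\int|\rho^*(t)|\,S_{i_1,t}S_{i_2,t}\,\mathrm{d}t$ requires Minkowski's inequality, either directly in $L^s_x$ or in mixed norms after H\"older in $t$. Either way this forces $1/q_1+1/q_2\le 1$, which is the Banach range already covered by Proposition \ref{Lr>1}. So two of your claims are unsupported: that this ``yields only $r>1/2$'' (Step 2), and that Cauchy--Schwarz plus Theorem \ref{LP} work ``in $L^p$ with $1/p$ near $2$'' (Step 3). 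This bilinear quasi-Banach estimate with loss $2^{\e n}$ is exactly what the paper imports from \cite{Li2}.

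Your suggested remedy is to dominate the translated square functions uniformly in $t$ at cost $2^{\e n_J^*}$. This is impossible. Let $f$ be a single wave packet at frequency scale $\lambda=2^{m_jk_0}\delta_j$. Then $\sup_{1/2<|t|<2}S_{t}f$ is comparable to the height of $f$ on a set of measure $\sim 2^{n_j}\lambda^{-1}$, so $\|\sup_tS_tf\|_p\gtrsim 2^{n_j/p}\|f\|_p$. A power loss of this kind does not prove the statement, which asks for $2^{\e n_J^*}$ for every $\e>0$. It also could not be used downstream: there $2^{n_j}=\delta_j$, and the loss must be beaten by interpolation against a fixed Sobolev decay $\delta^{-c}$. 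For the same reason, Corollary \ref{corLP} gives no pointwise bound uniform in $t$ off an exceptional set. As the paper notes, on $\Omega^c$ a translated factor is only bounded by $\min\{1,2^{n_j}|F_j|\}$, and this is the real obstruction.

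\textbf{The restricted-type reduction is also mishandled.} First, $\Pi_{\delta_1,\dots,\delta_J}(\Id_{F_1},\dots,\Id_{F_n})$ is not pointwise bounded, so there is no reason for its $L^r(\Omega)$ quasi-norm to be $\lesssim|\Omega|^{1/r}$. Second, $\Omega^c$ has infinite measure, so an $L^1(\Omega^c)$ bound gives nothing in $L^r(\Omega^c)$ for $r<1$.

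The paper avoids both issues by dualizing. Set $\Omega=\bigcup_{j\le n+1}\{M\Id_{F_j}>C_0|F_j|\}$ and $F'_{n+1}=F_{n+1}\setminus\Omega$. It suffices to show, for every $F_{n+1}$ with $|F_{n+1}|=1$, that $\Lambda_{\delta_1,\dots,\delta_J}(\Id_{F_1},\dots,\Id_{F_n},\Id_{F'_{n+1}})\lesssim 2^{\e n_J^*}\prod_j|F_j|^{1/p_j}$, and then interpolate. When $J\ge 3$, only $F_{i_1},F_{i_2},F_{J+1},\dots,F_n$ appear on the right.

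In this dual form:
\begin{itemize}
\item the $t$-integral comes out by Fubini;
\item the exceptional set never has to be estimated;
\item the untranslated factors are bounded by $|F_j|^{1/p_j}$ on $\Omega^c$;
\item the extra translated factors are bounded by $1$.
\end{itemize}
What remains is the bilinear translated form. The paper handles it by the decomposition, localization and summation argument of \cite{Li2}, which it cites rather than reproduces. That argument is what produces $r$ below $1$ with only a $2^{\e n_J^*}$ loss. Your proposal needs both the dual formulation and a genuine substitute for that argument.
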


Define the $(n+1)$-linear form $\Lambda_{\delta_1, \dots, \delta_J}$ by
\begin{equation*}
\Lambda_{\delta_1, \dots, \delta_J}(f_1, \dots, f_{n+1})= \big\langle \Pi_{\delta_1, \dots, \delta_J}(f_1, \dots, f_n), f_{n+1} \big\rangle\,. 
\end{equation*}
Estimate \eqref{r_J-est} can be reduced to the following estimate: for any measurable set $F_{n+1}$ with $|F_{n+1}|=1$, 
there exists a subset $F'_{n+1}$ of $F_{n+1}$ with $|F_{n+1}'|\geq 1/2$ such that
\begin{equation}\label{r_Jest1}
\Lambda_{\delta_1, \dots, \delta_J}(\Id_{F_1}, \dots, \Id_{F_n}, \Id_{F'_{n+1}})
\lesssim 2^{\e n_J^*} \prod_{j=1}^{n} \big| F_j\big|^{1/p_j} \,.
\end{equation}
Similarly, estimate \eqref{r_J-est>2} can be reduced to the following estimate: for any measurable set $F_{n+1}$ with $|F_{n+1}|=1$, 
there exists a subset $F'_{n+1}$ of $F_{n+1}$ with $|F_{n+1}'|\geq 1/2$ such that
\begin{equation}\label{r_Jest1>2}
\Lambda_{\delta_1, \dots, \delta_J}(\Id_{F_1}, \dots, \Id_{F_n}, \Id_{F'_{n+1}})
\lesssim 2^{\e n_J^*}\!\big|F_{i_1}\big|^{1/p_{i_1}}\!
 \big| F_{i_2}\big|^{1/p_{i_2}}\!\!\!\prod_{j=J+1}^{n}\! \!\!\!\big| F_j\big|^{1/p_j} \,.
\end{equation}

\begin{definition}
Define the exceptional set $\Omega$ by 
\begin{equation*}
\Omega= \bigcup_{j=1}^{n+1} \big\{ x\in\mathbb R:  M\Id_{F_j}(x)> C_0 |F_j|\big\}\,,
\end{equation*}
where $Mf$ is the Hardy--Littlewood maximal function of $f$.
\end{definition}

We take $F_{n+1}'=F_{n+1}\backslash\Omega$. If $C_0$ is chosen sufficiently large, then we see that $|F_{n+1}'|\geq |F_{n+1}|/2$.
With the exceptional set removed, we can allow the range of 
$r$ to extend below $1$. The key observation is that, when $f_j=\Id_{F_j}$, we have 
\begin{equation*}
 \big| f_{j, k, \delta_j, \Psi}(x)\Id_{\Omega^c}(x) \big| \lesssim  \int\frac{2^{m_j k}\delta_j\Id_{F_j}(y)}{\big(1+2^{m_jk}\delta_j|x-y|\big)^{N}} \,\mathrm{d}y \,\Id_{\Omega^c}(x) \,,
\end{equation*}
which is bounded by 
$$M\Id_{F_j}(x)\Id_{\Omega^c}(x) \lesssim \min\{1, |F_j|\} \lesssim |F_j|^{1/p_j}\,.
$$
However,  for $f_{j, k, \delta_j, n_j, t}$,  we only have that, when $f_j=\Id_{F_j}$, 
\begin{equation*}
 \big| f_{j, k, \delta_j, n_j, t}(x)\Id_{\Omega^c} \big| \lesssim  \min\{1, 2^{n_j}|F_j|\} \,,
\end{equation*}
which constitutes the main obstruction to obtaining $r\leq r_J$. \\

The bilinear case of \eqref{r_Jest1}, corresponding to $n=2$, was established in \cite{Li2}. The multilinear case, corresponding to $n\geq 3$, follows by the same argument. Indeed, the decomposition, localization, and summation arguments used by the second author in \cite{Li2} remain valid in the present setting, and the additional parameters only affect the notation and the corresponding bookkeeping. In particular, all the estimates required in the bilinear argument hold uniformly with respect to these parameters, and the same summation argument yields 
\eqref{r_Jest1}.
Since reproducing the proof would amount to repeating the lengthy technical argument of \cite{Li2} without introducing any new ideas, we omit the details. We refer the reader to the proof of the bilinear estimate in \cite{Li2} for the underlying argument; the passage to the present setting is obtained by applying the same estimates and summation procedure.\\

The desired estimates for the paraproduct \(\Pi_{\delta_1,\dots,\delta_J}\) now follow by interpolation between \eqref{bana-J>2} and \eqref{r_J-est}. This completes the proof of Theorem \ref{para}. \\

The quasi-Banach case is substantially more difficult than the Banach case. The main difficulty arises from the presence of an integral involving \(\rho^*\) in the parameter \(t\). In the Banach range, Minkowski's inequality allows us to interchange the integral in \(t\) with the relevant \(L^r\)-norm, thereby avoiding this difficulty. In the quasi-Banach range, however, Minkowski's inequality is no longer available, and controlling this integral becomes a significant issue that requires a delicate technical argument.

\subsection{Alternative ways to handle the translated paraproducts}

There are at least two other ways to finish the proof of Theorem \ref{para} in the quasi-Banach range.\\

\noindent{\bf  The first way}. 
By repeating the argument for the bilinear paraproducts in \cite{Li2}, with a minor modification, we obtain the following proposition.

\begin{proposition}
Let
\begin{equation*}
 \Pi^{(2)}_{\delta_{i_1},\delta_{i_2}} (f_1, f_2)(x) = \sum_{k\in\mathbb K} \int\! |\rho^*(t)| \prod_{j=1}^2\!
 \big|f_{j, k, \delta_{i_j}, n_j, t}(x) \big|\, \mathrm{d}t \,.
\end{equation*}
Then for any $1/2<r<1$ and $p_1, p_2>1$ with $1/r=1/p_1+1/p_2$, we have 
\begin{equation*}
    \big\| \Pi^{(2)}_{\delta_{i_1},\delta_{i_2}} (f_1, f_2) \big\|_r \lesssim_{\e} 2^{\e\max\{n_1, n_2\}} \|f_1\|_{p_1}
    \|f_2\|_{p_2}\,,
\end{equation*}
for any $f_1\in L^{p_1}$ and $f_2\in L^{p_2}$. 
\end{proposition}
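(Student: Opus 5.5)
We prove the restricted weak-type form of the estimate and then interpolate. Since the operator is positive and bilinear, multilinear interpolation reduces the claim to the following. For measurable $F_1,F_2$ and any $F_3$ with $|F_3|=1$, there is a subset $F_3'\subset F_3$ with $|F_3'|\ge 1/2$ such that
\begin{equation*}
\int_{F_3'} \Pi^{(2)}_{\delta_{i_1},\delta_{i_2}}(\Id_{F_1},\Id_{F_2})\,\mathrm{d}x \lesssim_\e 2^{\e\max\{n_1,n_2\}}|F_1|^{1/p_1}|F_2|^{1/p_2}
\end{equation*}
for all $(1/p_1,1/p_2)$ in a neighborhood of the segment $1/p_1+1/p_2=2-\sigma$. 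The endpoint $r=1$ is already available: by Minkowski in $t$, Cauchy--Schwarz in $k$, Theorem \ref{LP} and the Banach range argument of Proposition \ref{Lr>1}, we get the bound $n_1n_2\lesssim_\e 2^{\e\max\{n_1,n_2\}}$. Interpolating between the two regimes gives the full range $1/2<r<1$.

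For the restricted estimate, define $\Omega$ as in the definition above, but enlarged to absorb the translations. That is, we also include the sets $\{M\Id_{F_j}>C_02^{-10\max\{n_1,n_2\}}|F_j|\}$, or more efficiently their dyadic-scale versions, and take $F_3'=F_3\setminus\Omega$. Then we split the $k$-sum according to the geometry of $\Omega$. For each $k$ and each $x\notin\Omega$, let $I_{k}(x)$ be the dyadic interval of length $\sim 2^{-m_jk}\delta_{i_j}^{-1}$ containing $x$, and compare it with the Whitney decomposition of $\Omega$. The key pointwise bound is
\begin{equation*}
\big|(\Id_{F_j})_{j,k,\delta_{i_j},n_j,t}(x)\big|\lesssim \Big(1+\frac{\mathrm{dist}(x,\Omega^c)}{2^{-m_jk}\delta_{i_j}^{-1}}\Big)^{-N}2^{n_j}|F_j|+\text{tail}.
\end{equation*}
For scales where the translated window $x-2^{n_j}2^{-m_jk}\delta_{i_j}^{-1}t^{m_j}$ still lies outside the $2^{n_j}$-dilate of $\Omega$, the factor is bounded by $|F_j|$. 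For the remaining scales we get the weaker bound $\min\{1,2^{n_j}|F_j|\}$, but only on a set of $x$ whose measure is controlled by $2^{n_j}|\Omega|$ at each Whitney level. Summing over levels then gives a geometric series with loss $2^{O(\e n)}$ once we use $L^{1+\e}$ rather than $L^\infty$ control.

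On the good part we bound the form using the two square functions in $L^2$ (or $L^{p}$ with $p$ near $1$). We apply Cauchy--Schwarz in $k$ and Theorem \ref{LP} in the localized form of size and energy estimates: the size is at most $|F_j|$ off $\Omega$, and the energy is $\lesssim\log(2^{n_j})|F_j|^{1/2}$. This gives
\begin{equation*}
\lesssim n_1n_2\,|F_1|^{1-\theta_1}|F_2|^{1-\theta_2}
\end{equation*}
for any $\theta_1,\theta_2\in[0,1/2]$, which covers exponents down to $1/p_1+1/p_2<2$.

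The main obstacle is the bad part: the scales $k$ where the translation by $2^{n_j}$ wavelengths moves the averaging window from $\Omega^c$ into $\Omega$. There the pointwise bound $|F_j|$ fails. The plan is to trade the translation for a $2^{n_j}$-fold enlargement of the exceptional set. Rather than enlarging $\Omega$ itself, which would destroy $|F_3'|\ge1/2$ when $|F_j|$ is large, we use the smallness only in the form of a decay factor $\big(2^{-m_jk}\delta_{i_j}^{-1}/\mathrm{dist}(x,\Omega^c)\big)^{N}$ combined with a count showing that at most $O(n_j)$ consecutive $k$ are affected per Whitney interval. We then interpolate the resulting $2^{n_j}$ losses against the $r=1$ bound, reducing them to $2^{\e n_j}$ for any $\e>0$, exactly as in \cite{Li2}.
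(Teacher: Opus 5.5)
Your framework (restricted-type bounds near $r=1/2$ with the exceptional set $\Omega$, then interpolation with the $r\geq1$ bound from the argument of Proposition \ref{Lr>1}) is the one the paper sets up in Section \ref{Pf-para}. The paper does not carry out the argument itself; it defers it to \cite{Li2}. The problem is that the step carrying all the difficulty is not handled. That step concerns the scales whose translated window lands in $\Omega$, where one only knows $\min\{1,2^{n_j}|F_j|\}$, and the concrete devices you propose for it fail.

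\begin{itemize}
\item Adding $\{M\Id_{F_j}>C_02^{-10\max\{n_1,n_2\}}|F_j|\}$ to $\Omega$ is not allowed. When $F_j$ is an interval of any length, this set has measure $\sim 2^{10\max\{n_1,n_2\}}/C_0$ and can contain $F_3$, so $|F_3'|\geq1/2$ is lost. You retract this later, but nothing replaces it.
\item Your displayed pointwise bound is vacuous, because $\mathrm{dist}(x,\Omega^c)=0$ for every $x\in F_3'$. If you meant the distance of the translated point, the claimed decay is false: deep inside a Whitney interval of $\Omega$ the local average of $\Id_{F_j}$ can be of size $1$.
\item Interpolation cannot turn a $2^{n}$ loss into $2^{\e n}$. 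Interpolating a constant $2^{Cn}$ at some $r_0\in(1/2,1)$ with the constant $n_1n_2$ at $r=1$ gives, at a target $r\in(r_0,1)$, the constant $2^{Cn(1/r-1)/(1/r_0-1)}(n_1n_2)^{\theta}\geq 2^{Cn(1/r-1)}$. This is not $O_\e(2^{\e n})$.
\item The $L^{1+\e}$ device does not rescue this. H\"older on $F_3'$ forces $1/q_1+1/q_2\leq1$, so at most one of the two translated square functions can be measured near $L^1$.
\end{itemize}

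The underlying missing idea is the following. Every ingredient you use (size off $\Omega$, Theorem \ref{LP}, counting bad scales per Whitney interval) controls one factor at a time. Nothing uses that the two translations involve distinct powers $t^{m_{i_1}}$ and $t^{m_{i_2}}$, and without that the statement is false. Suppose both factors were translated by the same $2^{n}2^{-mk}\delta^{-1}t^{m}$, and take $f_1=f_2=\Id_{[0,\eta]}$ with $\eta=2^{-mk_0}\delta^{-1}$. The $k_0$ term alone equals $\int|\rho^*(t)||G(x/\eta-2^nt^m)|^2\,\mathrm{d}t$ with $G=\Id_{[0,1]}*\Phi$. This is $\gtrsim2^{-n}$ on a set of measure $\sim2^n\eta$, so $\|\Pi^{(2)}(f_1,f_2)\|_r\gtrsim2^{n(1/r-1)}\|f_1\|_{p_1}\|f_2\|_{p_2}$. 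That is exactly the loss your interpolation step produces. None of your steps distinguishes this case from the actual one, so they cannot prove the proposition.

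With $m_{i_1}\neq m_{i_2}$ the analogous example is harmless. The $t$-windows (of widths $2^{-n_1}$ and $2^{-n_2}$) on which the two factors are large drift apart as $x$ varies, and overlap only for a $2^{-\min\{n_1,n_2\}}$-fraction of the relevant $x$. The bad-part estimate must capture this transversality and then sum in $k$ at a cost of only $2^{\e n}$. One possible tool is coefficient-uniform single-scale bounds in the spirit of Theorem \ref{thm1/2}, whose proof via Lemma \ref{bil-lem} is driven by the Jacobian of $(x,t)\mapsto(x-\beta t^{m_1},x-t^{m_2})$. That estimate is what your proposal is missing.
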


Using maximal function theory,  H\"{o}lder's inequality and multilinear interpolation, we immediately obtain the following corollary, from which 
Theorem \ref{para} follows. 

\begin{corollary}
 Let $J\in\{2, \dots, n\}$.  Let $i_1, i_2\in \{1, \dots , J\}$ be any two distinct indices. 
Then for any $\e>0$,
\begin{equation*}
 \big\| \Pi_{\delta_1, \dots, \delta_J} (f_1, \dots, f_n)\big\|_{r} \lesssim 2^{\e n_J^*}\big\|f_{i_1}\big\|_{p_{i_1}}\big\|f_{i_2}\big\|_{p_{i_2}}\prod_{\substack{1\leq j\leq J\\ j\neq i_1, i_2}}
 \big\|f_j\big\|_{\infty}
 \!\!
 \prod_{j=J+1}^n \!\big\| f_j\big\|_{p_j}
\end{equation*}
holds for all $p_{i_1}, p_{i_2}, p_{J+1}, \dots, p_n>1$ and $r>r_J$ such that 
$$
\frac{1}{r}= \frac{1}{p_{i_1}} +\frac{1}{p_{i_2}}+\sum_{j=J+1}^n\frac{1}{p_j}\,. 
$$
\end{corollary}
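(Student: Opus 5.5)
Since every factor other than $f_{i_1}$ and $f_{i_2}$ is placed in $L^\infty$ or in $L^{p_j}$ with $j>J$, the plan is to pull those factors out pointwise, apply the bilinear Proposition to the two distinguished translated factors, and then interpolate. Write $\Pi_{\delta_1,\dots,\delta_J}$ as in \eqref{def-Pi} and insert absolute values.

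The first step is to bound the factors with $j\in\{1,\dots,J\}\setminus\{i_1,i_2\}$. Each $f_{j,k,\delta_j,n_j,t}$ is a convolution of $f_j$ with a translate of an $L^1$-normalized Schwartz function, so
\[
|f_{j,k,\delta_j,n_j,t}(x)|\lesssim \|f_j\|_\infty
\]
uniformly in $k$, $t$ and $n_j$; this holds even though the translate is not centered. The second step is to bound the untranslated factors with $j\ge J+1$ pointwise by $\sup_k|f_{j,k,\delta_j,\Psi}|\lesssim Mf_j(x)$, which is independent of $k$ and $t$. Both bounds can be pulled out of the $k$-sum and the $t$-integral. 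This gives
\[
|\Pi_{\delta_1,\dots,\delta_J}(f_1,\dots,f_n)(x)|\lesssim \prod_{\substack{1\le j\le J\\ j\neq i_1,i_2}}\|f_j\|_\infty\prod_{j=J+1}^n Mf_j(x)\;\Pi^{(2)}_{\delta_{i_1},\delta_{i_2}}(f_{i_1},f_{i_2})(x),
\]
where $\Pi^{(2)}$ is the positive bilinear operator of the Proposition, relabeled with indices $i_1,i_2$ and shifts $n_{i_1},n_{i_2}$.

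The third step applies H\"older's inequality in $L^r$ with $1/r=1/s+\sum_{j>J}1/p_j$ and $1/s=1/p_{i_1}+1/p_{i_2}$. The maximal factors are bounded by $\|f_j\|_{p_j}$ for $p_j>1$. If $s>1/2$, the Proposition for $s<1$, or the Banach-range argument of Proposition \ref{Lr>1} for $s\ge1$, gives the factor $2^{\e\max\{n_{i_1},n_{i_2}\}}\le 2^{\e n_J^*}\|f_{i_1}\|_{p_{i_1}}\|f_{i_2}\|_{p_{i_2}}$. This already handles every $r>1/2$ whose splitting has $s>1/2$, which is the whole claimed range when $J\ge3$. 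Those estimates, combined by multilinear interpolation, then feed into Theorem \ref{para}.

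The main obstacle is the case $J=2$ with $r$ close to $1/n$. There the H\"older splitting forces $s\le 1/2$, outside the range of the Proposition. In that case I would not bound $f_{j,\Psi}$ by $Mf_j$ naively. Instead I would use restricted weak-type estimates on indicator functions, removing the exceptional set $\Omega$ so that $Mf_j\lesssim|F_j|^{1/p_j}$ off $\Omega$. Combining this with the bilinear estimate near $s=1/2$ and multilinear interpolation should give all $(p_{i_1},p_{i_2},p_{J+1},\dots,p_n)$ with $r>r_J$, losing only $2^{\e n_J^*}$. If this does not close directly, I would fall back on the restricted-type estimate \eqref{r_Jest1} for $J=2$ and interpolate.
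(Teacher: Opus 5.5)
Your first three steps are exactly the paper's (one-line) argument. You bound the remaining translated factors by $\|f_j\|_\infty$ and the untranslated ones by $Mf_j$, then apply H\"older's inequality together with the bilinear Proposition for $\Pi^{(2)}$, or the Banach-range argument when $s\ge 1$.

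The ``main obstacle'' in your last paragraph, however, does not exist. Since $p_{i_1},p_{i_2}>1$, you always have $1/s=1/p_{i_1}+1/p_{i_2}<2$, i.e.\ $s>1/2$, whatever $r$ is. For $J=2$ and $r$ near $1/n$, the excess $1/r-1/s$ is carried entirely by the $n-2$ maximal factors $Mf_j\in L^{p_j}$ with $p_j>1$. So your steps one to three already cover every admissible tuple. For $3\le J<n$ they even give all $r>1/(2+n-J)$, which goes beyond the stated range $r>1/2$.

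You should delete that paragraph. The fallback to exceptional sets or to \eqref{r_Jest1} is unnecessary, and it would defeat the purpose of this alternative route, which exists precisely to bypass \eqref{r_Jest1}.
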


\vspace{0.4cm}

\noindent{\bf  The second way}. 
We take \(\Pi_{\delta_1,\ldots,\delta_n}\) as an illustrative example, since the other cases can be treated similarly. For a sequence of functions \(\{f_k\}\), write \(\|\{f_k\}\|_{L^p(\ell^q)} := \|\|\{f_k\}\|_{\ell^q}\|_{L^p}\). 

First, the boundedness of the bilinear maximal operator can be used, via interpolation, to obtain the boundedness of the \(n\)-linear maximal operator. Then one constructs the vector-valued operator corresponding to \(\Pi_{\delta_1,\ldots,\delta_n}\):
\[
(\{f_{1,k}\},\dots,\{f_{n,k}\}) \mapsto \left\{ \int \prod_{1\leq j\leq n} f_{j,k,\delta_j,n_j,t}(x)\,\rho^*(t)\,dt \right\}.
\]
Here $f_{j,k,\delta_j,n_j,t}$ is defined as before, with the input $f_j$ replaced by the sequence element $f_{j,k}$. For any $(1/p_1, \dots, 1/p_n, 1/r)\in\mathbb P_n$, the Sobolev estimate (Proposition \ref{prop1}) yields the boundedness
\[
L^{p_1}(\ell^{p_1}) \times \dots \times L^{p_n}(\ell^{p_n}) \to L^r(\ell^r),
\]
while the boundedness of the maximal operator yields the boundedness 
\[
L^{p_1}(\ell^\infty) \times \dots \times L^{p_n}(\ell^\infty) \to L^r(\ell^\infty).
\]
By using interpolation and the Littlewood--Paley theorem, the desired result follows. \\


\appendix

\section{Sharpness of Theorem \ref{thm-al}} \label{appendixA}

We investigate the sharpness of Theorems \ref{thm1/2} and \ref{thm-al} using a counterexample which was discovered by the authors, inspired by AI-generated suggestions.

\begin{proposition}
Let $n\geq3$, and let $m_1,\ldots,m_n$ be distinct positive integers, at least one of which is odd. Consider the operator
	\[
	T(f_1,\ldots,f_n)(x)=\int_{\mathbb{R}} \prod_{1\leq i\leq n}f_i\left(x-\frac{t^{m_i}}{m_i}\right)\rho(t)\,\mathrm{d}t,
	\]
	where \(\rho\) is a smooth function supported in \(\{t\in\mathbb{R}  : |t|\sim 1\}\) such that $\rho(1)\neq 0$. For any exponents $p_{1}, \dots,p_{n},r$ satisfying
	\begin{equation*}
		\frac{1}{n}<r<\frac12,\quad\ 1<p_i<\infty,\quad \frac{1}{r}=\sum_{1\le i\le n} \frac{1}{p_i},
	\end{equation*}
	the inequality
	\[
	\|T(f_1,\ldots,f_n)\|_r\lesssim \prod_{1\leq i\leq n}\|f_i\|_{p_i}
	\]
    cannot hold for all functions $f_i\in L^{p_i}(\mathbb R)$.
	\end{proposition}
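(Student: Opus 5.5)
The plan is to build an explicit family of characteristic functions that concentrate near the point $t=1$, where all the curves $t\mapsto t^{m_i}/m_i$ have the \emph{same} velocity. Indeed, $\frac{\mathrm d}{\mathrm dt}\big(t^{m_i}/m_i\big)=t^{m_i-1}$ equals $1$ at $t=1$ for every $i$. This first-order tangency (the normalization $1/m_i$ is exactly what makes it happen) lets the $n$ translated arguments move together along a segment of length $\sim\delta$, while their mutual deviation is only quadratic, of size $O(\delta^2)$. Testing the operator on $\delta^2$-intervals should then give a gain that violates scaling precisely when $r<1/2$.

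Concretely, fix a small $\delta>0$ and write $t=1+s$ with $|s|\le\delta$. Taylor expansion gives
\[
x-\frac{t^{m_i}}{m_i}=x-\frac1{m_i}-s+E_i(s),\qquad |E_i(s)|\le C_{m}\,\delta^2 .
\]
Set $f_i=\Id_{I_i}$, where $I_i=[-1/m_i-A\delta^2,\,-1/m_i+A\delta^2]$ and $A$ is a large constant depending on $m_1,\dots,m_n$. Suppose $|x|\le\delta/2$ and $s$ lies in the set $\{s:|x-s|\le\delta^2\}$, which has measure $\sim\delta^2$ and is contained in $[-\delta,\delta]$. Then every argument $x-t^{m_i}/m_i$ lies in $I_i$ once $A>C_m+1$. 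Since $\rho$ is continuous with $\rho(1)\neq0$, we may assume (replacing $\rho$ by $-\rho$ if necessary) that $\operatorname{Re}\rho\ge c>0$ on $[1-\delta,1+\delta]$ for $\delta$ small. The integrand is nonnegative apart from $\rho$ and vanishes for $|s|>\delta$ in this range of $x$, because the windows force $|s|\lesssim\delta$. Hence, for $|x|\le\delta/2$,
\[
|T(f_1,\dots,f_n)(x)|\ \ge\ \operatorname{Re}T(f_1,\dots,f_n)(x)\ \gtrsim\ \delta^2 .
\]
Here one must check that no other portion of $\operatorname{supp}\rho$ (for instance $t$ near $-1$) contributes with the opposite sign. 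For $|x|\le\delta/2$ and $|t|\sim1$, all arguments land in the tiny intervals $I_i$ only when $t$ is near $1$, since already the two conditions $x-t^{m_i}/m_i\approx-1/m_i$ for two indices force $t^{m_i}=1+O(\delta)$ with a consistent sign. This is the one place where some care is needed, and it is where the parity hypothesis can be used if required.

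It remains to compare norms. We have
\[
\|T(f_1,\dots,f_n)\|_r\gtrsim \delta^{2}\,\delta^{1/r},\qquad \prod_i\|f_i\|_{p_i}\sim\prod_i\delta^{2/p_i}=\delta^{2/r}.
\]
The ratio is therefore $\gtrsim\delta^{2-1/r}$, which tends to $\infty$ as $\delta\to0$ exactly when $r<1/2$; this contradicts the assumed inequality. The main obstacle is only the bookkeeping of the error terms $E_i$, which must be made uniform so that the constant $A$ is independent of $\delta$, together with the sign and localization check for $\rho$ described above. The condition $r>1/n$ merely ensures that admissible exponents $p_i>1$ exist, and $n\ge3$ makes the range $(1/n,1/2)$ nonempty.
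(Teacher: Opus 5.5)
Your argument is correct and is essentially the paper's proof. With $\delta=N^{-1}$, the paper tests on bumps of width $N^{-2}$ centered at $-1/m_i$, restricts to $|x|\lesssim N^{-1}$ and to the $t$-window $|t-(1+x)|\le 0.5N^{-2}$, and gets the same blow-up ratio $N^{1/r-2}$. The one point to tighten is the localization near $t=1$: two even exponents do not rule out $t\approx-1$ (if all $m_i$ were even and $\rho$ odd, then $T\equiv0$), so the parity hypothesis is genuinely required. Use the odd exponent $m_\ell$ directly, since $|t^{m_\ell}-1|\lesssim\delta$ forces $|t-1|\lesssim\delta$, exactly as the paper does. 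If $\rho$ is complex-valued, also normalize by $\overline{\rho(1)}/|\rho(1)|$ rather than by $\pm1$.
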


	\begin{proof}
Take a smooth function \(\chi\) such that
\[
0\leq \chi \leq 1,\qquad \chi = 1 \text{ on }[-1,1],\qquad \operatorname{supp}\chi \subset [-2,2].
\] 
 For a sufficiently large \(N\), let
		\[
		f_{i}(y) = \chi \left(N^{2}\left(y + \frac{1}{m_i}\right)\right), \qquad 1\leq i\leq n.
		\]
		Then
		\[
		\prod_{1\leq i\leq n}\| f_{i}\|_{p_{i}}\lesssim N^{-\frac{2}{r}}.
		\]
		To obtain a lower bound for $\|T(f_1,\ldots,f_n)\|_r$, we only consider the range \(|x|\leq c_1 N^{-1}\), where $c_1>0$ will be chosen sufficiently small. Choose an index $\ell$ for which $m_\ell$ is odd. If the integrand is nonzero, then
\[
 \left|x-\frac{t^{m_\ell}}{m_\ell}+\frac1{m_\ell}\right|
 \leq2N^{-2},
 \qquad
 |t^{m_\ell}-1|
 \leq m_\ell\bigl(|x|+2N^{-2}\bigr).
\]
Since $m_\ell$ is odd, for all sufficiently large $N$, we have
\begin{equation*}
 |t-1|\lesssim_{m_\ell,c_1}N^{-1}.
\end{equation*}
In particular, there is no contribution from a neighborhood of $t=-1$.

       If $|t-(1 + x)|\leq 0.5 N^{-2}$, then, by Taylor's expansion of $t^{m_i}$ at $t=1$, for $c_1$ sufficiently small and  $N$ sufficiently
large,  
		\[
		 \left|x-\frac{t^{m_i}}{m_i}+\frac1{m_i}\right|\leq\frac{1}{2}N^{-2}+C_{m_i}(t-1)^2\leq N^{-2}, \qquad 1\leq i\leq n.
		\]
		Hence every factor $f_i(x-t^{m_i}/m_i)$ equals $1$ on this range of \(t\), and the integrand is close to \(\rho(1)\). Therefore,
		\[
		\left\|\int \prod_{1\leq i\leq n}f_{i}\left(x - \frac{t^{m_i}}{m_i}\right)\rho(t)\,\mathrm{d}t\right\|_{r}\gtrsim N^{-2 - \frac{1}{r}}.
		\]
		Letting \(N\rightarrow \infty\) shows that $r\geq1/2$ is necessary, which completes the proof.
	\end{proof}
	
	The same construction gives the corresponding counterexample for the multilinear Hilbert transform.

		\begin{proposition}
Let $n\geq3$, and let $m_1,\ldots,m_n$ be distinct positive integers, at least one of which is odd. For any exponents $p_{1}, \dots,p_{n},r$ satisfying
	\begin{equation*} 
		\frac{1}{n}<r<\frac12,\quad\ 1<p_i<\infty,\quad \frac{1}{r}=\sum_{1\le i\le n}\frac{1}{p_i},
	\end{equation*}
	the inequality
		\[
		\left\| \mathrm{p.v.}\int_{\mathbb{R}} \prod_{1\leq i\leq n}f_{i}\left(x - \frac{t^{m_i}}{m_i}\right)\frac{\mathrm{d}t}{t}\right\|_{r}\lesssim \prod_{1\leq i\leq n}\| f_{i}\|_{p_{i}}
		\]
         cannot hold for all functions $f_i\in L^{p_i}(\mathbb R)$.
		\end{proposition}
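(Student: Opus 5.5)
The plan is to reuse the test functions from the previous proposition, $f_i(y)=\chi(N^2(y+1/m_i))$, and to bound $\|H(f_1,\dots,f_n)\|_r$ from below by restricting to $|x|\le c_1N^{-1}$. The truncated integrand is supported where all $n$ factors are nonzero. As in that proof, the odd index $\ell$ forces $|t-1|\lesssim N^{-1}$ on this support, and the points near $t=-1$ contribute nothing. Hence the principal value is not needed. The integral over $|t|$ small vanishes, since there $|x-t^{m_\ell}/m_\ell+1/m_\ell|\gtrsim 1$ and $f_\ell$ is zero. So for $|x|\le c_1N^{-1}$ we have exactly
\[
H(f_1,\dots,f_n)(x)=\int_{|t-1|\le CN^{-1}}\prod_{i}f_i\Big(x-\frac{t^{m_i}}{m_i}\Big)\frac{\mathrm{d}t}{t},
\]
and the kernel $1/t$ plays the role of $\rho$, with $1/t\in[1/2,2]$ positive on this range.

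The next step is positivity. Every $f_i$ is nonnegative, and $1/t>0$ on $|t-1|\le CN^{-1}$, so the integral is at least its restriction to $|t-(1+x)|\le \tfrac12N^{-2}$. There, by the Taylor estimate already carried out, every factor equals $1$. This gives $H(f_1,\dots,f_n)(x)\gtrsim N^{-2}$ for all $|x|\le c_1N^{-1}$, and therefore
\[
\|H(f_1,\dots,f_n)\|_r\gtrsim N^{-2}N^{-1/r}.
\]
On the other side, $\prod_i\|f_i\|_{p_i}\lesssim\prod_i N^{-2/p_i}=N^{-2/r}$. The inequality would then force $N^{-2-1/r}\lesssim N^{-2/r}$, that is $N^{1/r-2}\lesssim1$. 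Since $r<1/2$, letting $N\to\infty$ gives a contradiction.

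The main point to check carefully is the localization $|t-1|\lesssim N^{-1}$. It comes from the condition $|t^{m_\ell}-1|\le m_\ell(|x|+2N^{-2})$ with $m_\ell$ odd: the map $t\mapsto t^{m_\ell}$ is monotone with derivative near $m_\ell$ at $t=1$, so $t^{m_\ell}$ close to $1$ forces $t$ close to $1$, with no branch near $-1$. This is exactly what removes the singularity of $1/t$ at the origin and the sign change of $1/t$, so no cancellation can occur. Once this is in place, the argument is a verbatim transfer of the previous proof with $\rho$ replaced by $1/t$.
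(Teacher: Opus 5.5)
Your proposal is correct and is essentially the paper's argument. The paper proves this proposition only by remarking that the construction for the truncated operator carries over: the bumps $f_i(y)=\chi(N^2(y+1/m_i))$, the restriction to $|x|\le c_1N^{-1}$, and the odd exponent forcing $|t-1|\lesssim N^{-1}$. You make explicit why that localization lets $1/t$ act as a positive cutoff in place of $\rho$, which removes both the principal value and any cancellation from $t<0$.
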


	It is easy to observe that the counterexample above also applies to the multilinear maximal function along $(t^{m_1}/m_1,\ldots,t^{m_n}/m_n)$. The assumption that at least one exponent is odd is not necessary, because  the maximal operator has a nonnegative integrand, so restricting the integral to a neighborhood of \(t=1\) gives the required lower bound regardless of the parity of the degrees.


	\section{Sharpness of Theorem \ref{thm}} \label{appendixB}

        We investigate the sharpness of Theorem \ref{thm} using a counterexample suggested by an AI tool and subsequently verified by the authors. The investigation began with the special  moment curve $t, t^2, t^3$.

		\begin{proposition}\label{THT}
		If the exponents \(p_1, p_2, p_3, r\) satisfy
		\[
		1 < p_1, p_2, p_3 < \infty, \quad r^{-1} = \sum_{1\leq i\leq 3} p_i^{-1}, \quad r^{-1} + 2p_1^{-1} > 4,
		\]
		then the inequality
		\[
		\left\| \mathrm{p.v.} \int_{\mathbb{R}} f_1(x - t)f_2(x - t^2)f_3(x - t^3) \frac{\mathrm{d}t}{t} \right\|_r \lesssim \prod_{1\leq i\leq 3} \|f_i\|_{p_i}
		\]
        cannot hold for all functions $f_i\in L^{p_i}$.
	\end{proposition}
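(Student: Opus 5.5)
The plan is to build an explicit Knapp-type counterexample localized near the singularity $t=0$ of the kernel, with $f_1,f_2,f_3$ indicators of small intervals at three anisotropic scales. I will then compare the size of $\|H(f_1,f_2,f_3)\|_r$ with $\prod_i\|f_i\|_{p_i}$ as the scale parameter $\delta\to0$. The form of the condition, $r^{-1}+2p_1^{-1}=3p_1^{-1}+p_2^{-1}+p_3^{-1}>4$, tells me what I am aiming for. With $f_1=\Id_{F_1}$, $|F_1|\sim\delta^{3}$, and $f_2=\Id_{F_2}$, $f_3=\Id_{F_3}$, $|F_2|\sim|F_3|\sim\delta$, the right-hand side is $\delta^{3/p_1+1/p_2+1/p_3}$. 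So I want a lower bound $\|H(f_1,f_2,f_3)\|_r\gtrsim\delta^{4}$, up to logarithms, whose exponent does not depend on $r$. The inequality would then fail as $\delta\to0$ exactly when $3/p_1+1/p_2+1/p_3>4$. This forces the lower bound to come from a set of $x$ of measure $\sim1$, or from an $r$-independent combination of a small value on a large set; a lower bound of the shape $V\,|E|^{1/r}$ with $|E|$ small would not suffice.

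\textbf{Step 1 (configuration).} I will choose the intervals so that the constraints $x-t\in F_1$, $x-t^2\in F_2$, $x-t^3\in F_3$ are compatible on a large region. They should also force $t$ into a one-signed window, so that there is no cancellation from the odd kernel $1/t$ in the principal value. The first attempt is to place $F_1$ near $0$ with width $\delta^{3}$, which pins $t=x+O(\delta^3)$. I then take $F_2,F_3$ to be $\delta$-neighbourhoods of the curves $\{x-x^2\}$ and $\{x-x^3\}$ over the chosen $x$-range $E$, so that $T(x)\approx\delta^3/x$ there, and I optimize over the choice of $E$.

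\textbf{Step 2 (lower bound).} Assuming the constraints are compatible, for each admissible $x$ the $t$-integral is over an interval of length $\sim\delta^3$ at fixed sign. This gives a pointwise bound $|H(f_1,f_2,f_3)(x)|\gtrsim\delta^3/|x|$ on the admissible $x$-set. Integrating this bound in $L^r$ for $r<1$ gives the lower estimate.

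\textbf{Step 3 (comparison).} I compare the resulting power of $\delta$ with $\delta^{1/r+2/p_1}$ and let $\delta\to0$.

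\textbf{Main obstacle.} This is Step 1, and my rough computations already show the difficulty. If $F_2,F_3$ are chosen as images of an $x$-range $E$ of length $\lambda$, then their widths grow like $\lambda$. The gain $\delta^3/|x|$ is then exactly offset by the cost in $\|f_2\|_{p_2}\|f_3\|_{p_3}$; with $E=[\delta,2\delta]$, for instance, one needs $2/p_1>2$, which is impossible. So the naive scaling near $t=0$ does not produce the threshold 4. What I would try next has two parts. First, I would exploit the fold of the map $(x,t)\mapsto(x-t^2,x-t^3)$, whose Jacobian $2t-3t^2$ vanishes at $t=2/3$; there the factorization $t^3-t^2+4/27=(t-2/3)^2(t+1/3)$ gives a $\delta^{1/2}$-long $t$-window from $\delta$-wide $F_2,F_3$, and $f_1$ can be placed to kill the spurious root $t=-1/3$. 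Second, I would combine this with a dyadic family of such pieces, so that the $x$-set is effectively of measure $\sim1$ while only $f_1$ pays the small-width cost; the factor $3$ in $3/p_1$ indicates that $f_1$ must carry the finest scale. I am not sure the numerology closes. If it does not, I would fall back to a randomized or multi-scale superposition of the single-scale example, testing duality against $L^{r'}$-type sets, to recover the missing $r$-independence.
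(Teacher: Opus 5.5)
\textbf{There is a genuine gap: the proposal contains no construction.} Your exponent bookkeeping is right, and it is exactly what the paper's example realizes. With $\delta=N^{-1}$ the paper has $|\mathrm{supp} f_1|\lesssim N^{-3}$, $|\mathrm{supp} f_2|,|\mathrm{supp} f_3|\lesssim N^{-1}$, and $|H(f_1,f_2,f_3)|\gtrsim N^{-4}$ on a set of measure $\gtrsim\eta$. This fails as $N\to\infty$ precisely when $3/p_1+1/p_2+1/p_3>4$.

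However, Step 1 is left open and Step 2 presupposes it, so the statement is not proved. The repairs you suggest do not lead to such a configuration:
\begin{itemize}
\item Localizing near $t=0$ only reshuffles the same scaling, as you found yourself.
\item The fold at $t=2/3$ does not give a $\delta^{1/2}$-long $t$-window for fixed $x$. Since $\partial_t(x-t^2)=-2t\neq 0$ there, the constraint $x-t^2\in F_2$ alone pins $t$ to a window of length $\lesssim\delta$. The $\delta^{1/2}$ is only the extent of the whole $(x,t)$-region.
\item The picture in Step 2, a single interval $F_1$ of width $\delta^3$ giving a $t$-window of length $\delta^3$ and value $\delta^3/|x|$, is not how the working example behaves. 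There the window per good $x$ has length $\delta^4$.
\end{itemize}

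\textbf{The missing idea is arithmetic rather than scaling.} The paper works at a regular point $t\approx a=1/4$ with nonnegative $f_j$. Then $1/t>0$ on the support, no principal value is needed, and the singularity plays no role.

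It introduces curved coordinates $\Phi_1(z)=z$, $\Phi_2(t-t^2-a+a^2)=-\Theta(t)$, $\Phi_3(t-t^3-a+a^3)=\Theta(t)$, with $\Theta'(u)=\frac{(1-2u)(1-3u^2)}{u(2-3u)}$. These are chosen so that $\Phi_1(x-t+a)+\Phi_2(x-t^2+a^2)+\Phi_3(x-t^3+a^3)=O(|x-t+a|^2)$.

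The functions are sums of bumps of width $\sim N^{-4}$:
\begin{itemize}
\item $f_1$ has $N$ bumps at $\Phi_1=iN^{-3}$;
\item $f_2$ has $M\approx\eta N^3$ bumps at $\Phi_2=s+jN^{-3}$;
\item $f_3$ has only $N+M$ bumps at $\Phi_3=-s-kN^{-3}$.
\end{itemize}
Because $i+j$ takes only $N+M$ values, the bumps of $f_3$ serve all $NM$ grid points $(x_{ij},t_{ij})$ simultaneously. The restriction $i\le N$ keeps $|x-t+a|\lesssim N^{-2}$, so the quadratic error is $O(N^{-4})$, below the bump width. Each grid point then yields $H\gtrsim N^{-4}$ on an $x$-interval of length $N^{-4}$.

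The last ingredient is the projection lemma. Average over the shift $s\in[0,\eta]$. Whenever $F=x_{ij}-x_{i'j'}$ comes within $2N^{-4}$ of zero, it is monotone in $s$ with $|\partial_sF|\gtrsim N^{-3}K$, where $K=\max(|i-i'|,|j-j'|)$; this comes from an invertible $2\times 2$ matrix of first-order coefficients. Cauchy--Schwarz on the overlap count then produces an $s$ with $|E_{M,N}(s)|\gtrsim\eta$.

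This is what gives the $r$-independent lower bound $N^{-4}|E|^{1/r}\gtrsim_\eta N^{-4}$ that you correctly identified as necessary. Nothing in your sketch, neither dyadic families of fold pieces nor random superpositions, supplies this sum-set structure.
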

		
		\begin{remark}
			When \(1/3 < r < 5/12\), the exponents \(p_1^{-1} = p_2^{-1} = p_3^{-1} = (3r)^{-1}\) satisfy the conditions in the above proposition.  When \(4/11 < r < 1/2\), the choices
		\[
		   p_1^{-1} = 1 - \epsilon, \quad p_2^{-1} = p_3^{-1} = \frac{1}{2} + 2\epsilon, \quad \textrm{with}\ \epsilon = \frac{r^{-1} - 2}{3} \in \left(0, \frac{1}{4}\right),
		\]
        also satisfy the conditions.
		This shows that for any \(1/3 < r < 1/2\), there exist exponents $p_{1},p_{2},p_{3}$ satisfying the H\"older relation such that the desired $(p_1, p_2, p_3, r)$-boundedness fails. 
For a fixed $r\in(1/3,1/2)$, it remains unknown whether there exist other choices of exponents for which the desired boundedness holds.
		\end{remark}
		
		In the following, let \(a = 1/4\), and \(\Phi_1, \Phi_2, \Phi_3\) be three one-variable functions defined on a small neighborhood of \(0\), determined by
        \[
			\Phi_1(x) = x, \qquad \Phi_2(t - t^2 - a + a^2) = -\Theta(t), \qquad \Phi_3(t - t^3 - a + a^3) = \Theta(t),
			\]
        where
        \begin{equation*}
        \Theta(t) = \int_a^t \frac{(1 - 2u)(1 - 3u^2)}{u(2 - 3u)} \, \mathrm{d}u
        \end{equation*}
        is defined on a small neighborhood of $a$.	By the inverse function theorem, the map
$$ (x,t)\mapsto \bigl(\Phi_1(x-t+a),\,\Phi_2(x-t^2+a^2)\bigr) $$
is a smooth diffeomorphism from a sufficiently small neighborhood of \((0,a)\) onto a neighborhood of \((0,0)\). We work in these neighborhoods below.

		Let $N$ be a large integer, $M=\lfloor\eta N^{3}\rfloor$, and $s\in[0,\eta]$,  where $\eta>0$ is a small absolute constant to be determined later. We consider a grid partition near \((0, 0)\) of mesh size \(N^{-3}\) with translation \((0, s)\) (referred to as ``grid \(1\)''); that is, we consider the nodes
		\[
		\left( i N^{-3}, s+j N^{-3}\right), \qquad 1 \leq i\leq N, 1\leq j \leq M.
		\]
		By the inverse mapping, we obtain a grid partition near \((0, a)\) (referred to as ``grid \(2\)''), whose nodes are the preimages of the above nodes, denoted by \(\left(x_{ij}(s), t_{ij}(s)\right)\) (for brevity, we write \(x_{ij}, t_{ij}\)). Thus, 
		\[
		\Phi_{1}\left(x_{ij} - t_{ij} + a\right) = i N^{-3},
		\]
		\[
		\Phi_{2}\left(x_{ij} - t_{ij}^{2} + a^{2}\right) =s+ j N^{-3}.
		\]

        We observe that the points of grid \(1\) are too regularly aligned: the projections onto the \(x\)-axis of the radius-\(N^{-4}\) neighborhoods centered at its nodes overlap heavily, so their union has measure only \(O(N^{-3})\). By contrast, in grid \(2\), by choosing the translation parameter \(s\) appropriately, we can stagger these projections so that their union has substantially larger measure.
        The following lemma makes this precise.

		\begin{lemma}\label{lemma1}
        With the notation above, define
        \[
		E_{M,N}:=\bigcup_{1\leq i\leq N,1\leq j\leq M}E_{M,N}^{(i,j)}:= \bigcup_{1\leq i\leq N,1\leq j\leq M}\left[x_{ij}(s) - N^{-4},x_{ij}(s) + N^{-4}\right].
		\]
        Then there exists \(s\in[0,\eta]\) such that
        $$ |E_{M,N}(s)|\gtrsim\eta, $$
        with an implicit constant independent of \(M\), \(N\), and \(\eta\).
		\end{lemma}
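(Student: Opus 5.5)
We will prove the lemma by averaging over the translation parameter $s\in[0,\eta]$, using a second-moment (Cauchy--Schwarz) bound. Put $\mathbf 1_{ij}(s,y)=\Id_{E^{(i,j)}_{M,N}(s)}(y)$ and $S(s,y)=\sum_{i,j}\mathbf 1_{ij}(s,y)$. Cauchy--Schwarz gives
\begin{equation*}
|E_{M,N}(s)|\geq \frac{\big(\int S(s,y)\,\mathrm{d}y\big)^2}{\int S(s,y)^2\,\mathrm{d}y}.
\end{equation*}
The numerator equals $(2N^{-4}\cdot NM)^2\sim \eta^2$ for every $s$, since each interval has length $2N^{-4}$ and $NM\sim\eta N^4$. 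It therefore suffices to find one $s$ with $\int S^2\,\mathrm{d}y\lesssim \eta$. We will get this by showing
\begin{equation*}
\frac1\eta\int_0^\eta\int S(s,y)^2\,\mathrm{d}y\,\mathrm{d}s\lesssim \eta .
\end{equation*}
Expanding the square, this average equals
\begin{equation*}
\sum_{(i,j),(i',j')}\frac1\eta\int_0^\eta\big|E^{(i,j)}(s)\cap E^{(i',j')}(s)\big|\,\mathrm{d}s\lesssim N^{-4}\sum_{(i,j),(i',j')}\frac1\eta\big|\{s\in[0,\eta]:|x_{ij}(s)-x_{i'j'}(s)|\leq 2N^{-4}\}\big| .
\end{equation*}
The diagonal terms contribute $NM\cdot N^{-4}\sim\eta$, which is acceptable.

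For the off-diagonal terms we need to understand $x_{ij}(s)$. Write $F=(F_1,F_2)$ for the inverse of the diffeomorphism $(x,t)\mapsto(\Phi_1(x-t+a),\Phi_2(x-t^2+a^2))$, so that $x_{ij}(s)=F_1(iN^{-3},\,s+jN^{-3})$ with $F_1$ smooth near $(0,0)$. For a pair, set $u=iN^{-3}$, $u'=i'N^{-3}$, $v=jN^{-3}$, $v'=j'N^{-3}$. The difference $g(s)=F_1(u,s+v)-F_1(u',s+v')$ is then a smooth function of $s$ depending on $(u,v,u',v')$.

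The key point is a transversality statement: outside a structured exceptional set of pairs, $g$ has derivative $\partial_2F_1(u,s+v)-\partial_2F_1(u',s+v')$ bounded below by $c(|u-u'|+|v-v'|)$. If this holds, then $\{s:|g(s)|\leq 2N^{-4}\}$ has measure $\lesssim N^{-4}/(|u-u'|+|v-v'|)$. Summing over pairs with $|u-u'|+|v-v'|\sim 2^{\ell}N^{-3}$ (about $NM\cdot 2^{2\ell}$ such pairs) gives a contribution
\begin{equation*}
\lesssim N^{-4}\eta^{-1}\sum_\ell NM\,2^{2\ell}\,N^{-4}\,2^{-\ell}N^{3}\lesssim N^{-4}\eta^{-1}\eta N^{4}\cdot N^{-1}\max 2^{\ell},
\end{equation*}
and since $2^\ell\lesssim N^{3}\eta$ is possible, this is too weak.

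We therefore need a better count. First, for pairs where $|g(s)|$ itself is large, say $|g|\gtrsim |u-u'|+|v-v'|$ whenever $s$ is generic, the intersection set is empty. Second, the honest near-coincidences are governed by the level sets of $F_1$. If the Hessian structure makes $\nabla F_1$ non-parallel along these level sets, the number of lattice pairs that come within $N^{-4}$ of each other in $x$ is about $N^{4}\eta\cdot(\text{measure})$. The cleanest way to organize this is to change variables. For fixed $(u,v)$, the quantity
\begin{equation*}
\int_0^\eta\#\{(i',j'):|F_1(u,s+v)-F_1(u',s+v')|\leq 2N^{-4}\}\,\mathrm{d}s
\end{equation*}
counts lattice points $(u',v'+s)$ in an $N^{-4}$-neighborhood of the curve $\{F_1=F_1(u,s+v)\}$ as $s$ varies. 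Integrating in $s$ turns the lattice count into an area: the $N^{-4}$-neighborhoods of level curves, swept by the translation in $s$, have total area $\lesssim \eta N^{-4}$ times lattice density $N^{6}$, which is $O(\eta N^{2})$.

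This is exactly where the averaging over $s$ removes the lattice resonance. The required input is $\partial_2F_1\neq0$, i.e.\ the level curves are transversal to the translation direction. That condition should follow from the explicit choice of $\Phi_2,\Theta$ and $a=1/4$ (the Jacobian computed at $(0,a)$). With it, the total is
\begin{equation*}
N^{-4}\eta^{-1}\cdot NM\cdot\eta N^{2}\cdot N^{-3}\cdot(\text{normalization})\lesssim\eta ,
\end{equation*}
after matching the lattice spacing $N^{-3}$ in both coordinates.

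The main obstacle is making this lattice-point counting rigorous. We must show that the $s$-averaged count of pairs with $|x_{ij}(s)-x_{i'j'}(s)|\leq 2N^{-4}$ is $O(\eta^2 N^{4})$, uniformly in $M,N,\eta$. This needs a uniform nondegeneracy of $F_1$, namely that $\partial_2F_1$ is nonvanishing and $F_1(\cdot,\cdot+s)$ is not affine in a way that aligns with the lattice. We would verify it by computing the derivatives of $F$ at $(0,0)$ from the definitions of $\Phi_1,\Phi_2$ and then shrinking the neighborhood, which fixes how small $\eta$ must be.
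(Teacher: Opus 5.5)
Your opening reduction is the same as the paper's: Cauchy--Schwarz, averaging over $s\in[0,\eta]$, and the diagonal terms contributing $NM$. So everything hinges on showing
\[
\eta^{-1}\sum_{(i,j)\neq(i',j')}\big|\{s\in[0,\eta]:|x_{ij}(s)-x_{i'j'}(s)|\leq 2N^{-4}\}\big|\lesssim NM .
\]
That off-diagonal bound is the whole content of the lemma, and your proposal does not prove it.

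The area heuristic does not close even as bookkeeping. A per-$(i,j)$ count of $O(\eta N^2)$ gives a total of $N^{-4}\eta^{-1}\cdot NM\cdot\eta N^2\sim\eta N^2$. The factor ``$N^{-3}\cdot(\text{normalization})$'' you insert to bring this down to $\eta$ is unjustified; the true per-$(i,j)$ bound is $O(\eta+N^{-1}\log N)$.

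More seriously, the nondegeneracy you single out, $\partial_2F_1\neq0$, cannot suffice. Suppose $F_1(u,v)=\lambda_1u+\lambda_2v$ were affine with $\lambda_2\neq0$. Then $x_{ij}(s)-x_{i'j'}(s)$ would be independent of $s$, and averaging in $s$ would gain nothing. If moreover $\lambda_1/\lambda_2$ were rational with bounded denominator, the union would have measure $O(\eta N^{-1})$. That is exactly the first-order picture here: $\nabla F_1(0,0)=(\lambda_1,\lambda_2)=(-1,-10/13)$ is resonant with the lattice. So lattice counts are not areas at first order, and no argument using only first-order information about $F_1$ can work.

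The missing idea is the second-order nondegeneracy the paper uses. Set $F(s)=x_{ij}(s)-x_{i'j'}(s)$ and $K=\max\{|i-i'|,|j-j'|\}$. Taylor expansion at the origin gives, uniformly in $s\in[0,\eta]$,
\[
N^{3}F(s)=\lambda_1(i-i')+\lambda_2(j-j')+O(\eta K),\qquad N^{3}F'(s)=\mu_1(i-i')+\mu_2(j-j')+O(\eta K),
\]
where $(\mu_1,\mu_2)=\nabla\partial_2F_1(0,0)$. The essential fact, to be checked from $\Phi_2$, $\Theta$ and $a=1/4$, is that the $2\times2$ matrix with rows $(\lambda_1,\lambda_2)$ and $(\mu_1,\mu_2)$ is invertible. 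The argument then runs as follows.

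\begin{itemize}
\item If the $s$-set is nonempty, the first row is $\lesssim\eta K$.
\item Invertibility then forces the second row to be $\gtrsim K$. Hence $F$ is monotone with $|F'|\gtrsim N^{-3}K$, and the $s$-set has measure $\lesssim N^{-1}K^{-1}$.
\item The same smallness of the first row, together with $|i-i'|<N$ and $\lambda_2\neq0$, gives $K\lesssim N$. This is where your condition $\partial_2F_1\neq0$ is genuinely used.
\item It also leaves only $O(\eta K+1)$ admissible $(i,j)$ at distance $K$ from $(i',j')$.
\end{itemize}

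Summing gives
\[
\eta^{-1}NM\sum_{K\lesssim N}(\eta K+1)N^{-1}K^{-1}\lesssim NM\big(1+\eta^{-1}N^{-1}\log N\big).
\]

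In your own language: the exceptional cone where the derivative lower bound fails (around the kernel of the second row) is harmless only because it is separated from the near-coincidence cone (around the kernel of the first row). That separation is exactly the invertibility, and it is what you need to state and verify. Your dyadic estimate failed because it counted all $\sim NM2^{2\ell}$ pairs up to $2^\ell\sim\eta N^3$. It should have counted only the $\lesssim NM(\eta 2^{2\ell}+2^{\ell})$ near-coincident pairs, which exist only for $2^\ell\lesssim N$.
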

		
		\begin{proof}
		In what follows, we omit the  ranges of \(i,j,i',j'\), and assume  by default that \(1\leq i,i'\leq N,\ 1\leq j,j'\leq M\). 
		By H\"older's inequality,
		\[
		|E_{M,N}|\geq \frac{(\int\sum_{i,j}1_{E_{M,N}^{(i,j)}})^{2}}{\int(\sum_{i,j}1_{E_{M,N}^{(i,j)}})^{2}} \geq \frac{\eta^{2}}{\sum_{i,i',j,j'}|E_{M,N}^{(i,j)}\cap E_{M,N}^{(i',j')}|}.
		\]
		Let \(Q_{M,N} = Q_{M,N}(s)\) be the number of quadruples \((i,i',j,j')\) such that \(E_{M,N}^{(i,j)}\cap E_{M,N}^{(i',j')}\neq \emptyset\). It suffices to prove that there exists \(0\leq s\leq \eta\) such that \(Q_{M,N}(s)\lesssim NM\). This reduces to showing that the integral average of \(Q_{M,N}\) over \(\left[0,\eta\right]\) is \(\lesssim NM\). 
        
        By Fubini’s theorem and distinguishing the diagonal and off-diagonal cases, we obtain
\begin{align*}
			&\eta^{-1}\int_{0}^{\eta}Q_{M,N}(s)\,\mathrm{d}s\\
            = &\eta^{-1}\sum_{i,i',j,j'}\left|\left\{s\in \left[0,\eta\right] : |x_{ij}(s) - x_{i'j'}(s)|\leq 2N^{-4}\right\}\right|\\
		= &NM + \eta^{-1}\sum_{\substack{i',j'\\1\leq K\leq M}}\quad \sum_{\substack{i,j\\ \max(|i-i'|,|j-j'|)=K}}\left|\left\{s\in [0,\eta] : |F(s)|\leq 2N^{-4}\right\}\right|,
	\end{align*}
		where \(F(s):= x_{ij}(s) - x_{i'j'}(s)\). It suffices to show that the off-diagonal part is of size \(O (NM)\) whenever $\eta$ is sufficiently small and $N$ is sufficiently large. This follows easily from the following claims: 
        for fixed $i'$, $j'$, and $K$, if 
		\[\left\{s\in [0,\eta] : |F(s)|\leq 2N^{-4}\right\}\ne\emptyset,
		\]
		then \(F\) is monotone on \([0,\eta]\) and $|F'| \gtrsim N^{-3}K$ on $[0,\eta]$; only the case $K\lesssim N$ needs to be counted;  moreover, the number of pairs \((i,j)\) satisfying \(K = \max \{|i - i'|,|j - j'|\}\) is \(O(\eta K+1)\).

        It remains to prove these claims. Applying the fundamental theorem of calculus along the segment joining the two grid points, followed by Taylor estimates near the origin, we obtain the following expansions, uniformly for \(s\in[0,\eta]\):
		\[
		N^{3}F(s) = \lambda_{1}(i - i^{\prime})+\lambda_{2}(j - j^{\prime}) + O\left(\eta K\right),
		\]
		\[
		N^{3}F^{\prime}(s) = \mu_{1}(i - i^{\prime}) + \mu_{2}(j - j^{\prime}) + O\left(\eta K\right),
		\]
		where the matrix
		\[
		\begin{bmatrix}
			\lambda_{1} & \lambda_{2}\\
			\mu_{1} & \mu_{2}
		\end{bmatrix}=\begin{bmatrix}
			-1 & -10/13\\
			80/13 & 9680/2197
		\end{bmatrix}
		\]
		is invertible, which leads to 
		\begin{equation*} 
		K\asymp |\lambda_{1}(i - i^{\prime}) + \lambda_{2}(j - j^{\prime})| + |\mu_{1}(i - i^{\prime}) + \mu_{2}(j - j^{\prime})|.
	\end{equation*}

		Since $|F(s_{0})|\leq 2N^{-4}$ for some $s_{0}$ by assumption, we have
		\begin{equation}\label{eq1}
			|\lambda_{1}(i - i^{\prime}) + \lambda_{2}(j - j^{\prime})|\lesssim N^{-1}+\eta K\lesssim \eta K.
		\end{equation}
        Combining the preceding two equations shows that, if $\eta$ is sufficiently small, then 
     \[
	|\mu_1(i - i') + \mu_2(j - j')| \gtrsim K, 
	\]
    and hence 
    \[
      |F'| \gtrsim N^{-3}K \text{ on } [0,\eta].
     \]
     The monotonicity of $F$ then follows readily.
     
	When \(K \geq 3N\), since $|i-i'|<N$, we have $K=|j-j'|$. Therefore,  
\[
	|\lambda_1(i - i') + \lambda_2(j - j')| \geq |\lambda_2|K - |\lambda_1|N\gtrsim K.
	\]
	By shrinking \(\eta\) if necessary, this contradicts \eqref{eq1}.  This shows that we only need to consider the case \(K \leq 3N\).

To count the number of elements in the set    
	\begin{equation*}
		\{(i,j): K = \max(|i - i'|, |j - j'|)\},
	\end{equation*}
    we first observe that we must have $|j - j'|=K$. Indeed, if $|i - i'|=K$, then
    \begin{equation*}
        |\lambda_{1}(i - i^{\prime}) + \lambda_{2}(j - j^{\prime})|\geq |\lambda_{1}|K-|\lambda_{2}|K\gtrsim K,
    \end{equation*}
	which contradicts \eqref{eq1}. When $j-j'=K$, \eqref{eq1} implies that the number of possible values of $i$  is $O(\eta K+1)$. The case $j-j'=-K$ is similar. Hence, the total number of elements is $O(\eta K+1)$.
	\end{proof}

	We can now complete the construction of the counterexample.

	\begin{proof}[Proof of Proposition \ref{THT}]
			Choose $\chi,\rho\in C_c^\infty(\mathbb R)$ with the following properties:  \(0\leq \chi \leq 1\), \(\chi\) is identically \(1\) near \(0\), and \(\operatorname{supp} \chi\) is sufficiently small so that \(|\Phi_{i}^{\prime}|\sim 1\) on \(\operatorname{supp} \chi\);	\(0\leq \rho\leq 1\), \(\rho\) is identically \(1\) on \([- 1,1]\), and vanishes outside \([- 2,2]\).
		
		Let \(s\) be the translation parameter given by Lemma \ref{lemma1}. Construct three nonnegative functions
		\[
		f_{1}(y) = \chi (y + a)\sum_{1\leq i\leq N}\rho \left(\frac{\Phi_{1}(y + a) - iN^{-3}}{AN^{-4}}\right),
		\]
		\[
		f_{2}(y) = \chi (y + a^{2})\sum_{1\leq j\leq M}\rho \left(\frac{\Phi_{2}(y + a^{2}) -s- jN^{-3}}{AN^{-4}}\right),
		\]
		\[
		f_{3}(y) = \chi (y + a^{3})\sum_{2\leq k\leq N+M}\rho \left(\frac{\Phi_{3}(y + a^{3}) + s + kN^{-3}}{AN^{-4}}\right),
		\]
		where \(A > 0\) is a large absolute constant to be determined later. Note that (taking \(f_{1}\) as an example) for every \(i\),
		
		\[\left\{ y : \left| \frac{\Phi_1(y+a) -  iN^{-3}}{A N^{-4}} \right| \leq 2 \right\}\]
		is contained in the interval centered at \(\Phi_{1}^{-1}\left(iN^{-3}\right) - a\) with radius \(\lesssim_A N^{- 4}\). Also, since the distance between the centers corresponding to different \(i\) is \(\gtrsim N^{- 3}\), we have \(0\leq f_{1}\leq 1\) and \(|\operatorname{supp} f_{1}|\lesssim N^{- 3}\), hence \(\| f_{1}\|_{p_{1}}\lesssim N^{-3/p_1}\). The same argument applies to $f_2$ and $f_3$. Therefore,
		\[
		\prod_{1\leq i\leq 3}\| f_{i}\|_{p_{i}}\lesssim N^{-\frac{3}{p_1}-\frac{1}{p_2}-\frac{1}{p_3}}= N^{-\frac{2}{p_1}-\frac{1}{r}}.
		\]
		
		We  consider the principal value integral
		\[
		\mathrm{p.v.}\int_{\mathbb{R}} f_{1}(x - t)f_{2}(x - t^{2})f_{3}(x - t^{3})\frac{\mathrm{d} t}{t}
		\]
		on the range \(x \in E_{M,N}\) (in which case we have \(|x| \lesssim \eta \)). Since \(f_{1}(x-t)\ne 0\) implies \(t\sim a\) provided $\eta$ is sufficiently small,  the integral is supported away from \(t=0\), and no principal value is needed.  The condition \(x \in E_{M,N}\) implies that \(x\) lies within a distance of \(N^{-4}\) of some \(x_{ij}\).  We now fix \(x\), \(i\), and \(j\), and restrict attention to \(t\) satisfying \(|t-t_{ij}|\le N^{-4}\). If \(A\) is sufficiently large, then one can guarantee that
		\[
		f_{1}(x - t) = \chi (x - t + a)\rho \left(\frac{\Phi_{1}(x - t + a) - \Phi_{1}(x_{ij} - t_{ij} + a)}{AN^{-4}}\right) \equiv 1.
		\]
		Similarly, \(f_{2}(x - t^{2}) \equiv 1\) provided $\eta$ is sufficiently small. 
        
        For $|x-x_{ij}|\le N^{-4}$ and $|t-t_{ij}|\le N^{-4}$,
       Taylor's theorem gives
		\begin{equation*}
			|\Phi_1(x - t + a) + \Phi_2(x - t^2 + a^2) + \Phi_3(x - t^3 + a^3)|\lesssim |x-t+a|^2 \lesssim N^{-4}. 
		\end{equation*}
		Thus,
		\[
		\left|s + (i + j)N^{-3} + \Phi_3(x_{ij} - t_{ij}^3 + a^3)\right| \lesssim N^{-4}.
		\]
		Therefore, if \(A\) is sufficiently large and $\eta$ is sufficiently small, we have   
		\[
		f_3(x - t^3) = \chi(x - t^3 + a^3)\rho \left( \frac{\Phi_3(x - t^3 + a^3) + s + (i + j)N^{-3}}{A N^{-4}} \right) \equiv 1.
		\]
		Thus
		\[
		\left\| \mathrm{p.v.}\int f_{1}(x - t)f_{2}(x - t^{2})f_{3}(x - t^{3})\frac{\mathrm{d}t}{t}\right\|_{r} \gtrsim N^{-4}|E_{M, N}|^{1/r} \gtrsim N^{-4}\eta^3\gtrsim N^{-4}.
		\]
		Letting \(N \to \infty\) completes the proof.
		\end{proof}

	The construction above can be easily extended to the $n$-linear case with $n\geq 4$ involving the moment curve $t, t^2, \dots, t^n$.
    
		\begin{proposition}\label{MHT}
			If the exponents \(p_1, \dots, p_n, r\) satisfy
			\[
			1 < p_1, \ldots, p_n < \infty, \qquad r^{-1} = \sum_{1\leq i\leq n} p_i^{-1}, \qquad 3p^{-1}_1+p_2^{-1}+p_3^{-1}> 4,
			\]
			then the inequality
			\[
			\left\| \mathrm{p.v.} \int_{\mathbb{R}} \prod_{1\leq i\leq n}f_i(x - t^{i}) \frac{\mathrm{d}t}{t} \right\|_r \lesssim \prod_{1\leq i\leq n} \|f_i\|_{p_i}
			\]
            cannot hold for all functions $f_i\in L^{p_i}$.
		\end{proposition}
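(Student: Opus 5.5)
We follow the construction in the proof of Proposition \ref{THT}, using only the first three functions for the fine structure and taking $f_4,\dots,f_n$ to be bump functions concentrated near the relevant points. The goal is a family of test functions with
\[
\prod_{i=1}^n\|f_i\|_{p_i}\lesssim N^{-\frac{3}{p_1}-\frac{1}{p_2}-\frac{1}{p_3}-\sum_{i\ge4}\frac{c_i}{p_i}}
\]
and, on a set of measure $\gtrsim 1$, $|H(f_1,\dots,f_n)|\gtrsim N^{-4}$.

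\textbf{Setup.} Work near $t=a=1/4$ and $x=0$, with $\Phi_1,\Phi_2,\Phi_3$, the grid $(x_{ij}(s),t_{ij}(s))$ and the translation parameter $s$ exactly as before. Lemma \ref{lemma1} then provides $s$ with $|E_{M,N}(s)|\gtrsim\eta$. Define $f_1,f_2,f_3$ by the same formulas. For $i\geq4$, take $f_i(y)=\chi(y+a^i)$, a fixed smooth bump equal to $1$ on a fixed neighbourhood of $-a^i$. Since $|x|\lesssim\eta$ and $|t-a|\lesssim\eta$ on the relevant region, we get $x-t^i+a^i=O(\eta)$, so $f_i(x-t^i)\equiv1$ there once $\eta$ is small. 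These functions have $\|f_i\|_{p_i}\sim1$.

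\textbf{Estimates.} As before, $\|f_1\|_{p_1}\lesssim N^{-3/p_1}$, and $f_2,f_3$ are each supported on $O(N+M)$ intervals of length $N^{-4}$. This gives $\|f_2\|_{p_2}\lesssim N^{-1/p_2}$ and $\|f_3\|_{p_3}\lesssim N^{-1/p_3}$. Hence
\[
\prod_{i=1}^n\|f_i\|_{p_i}\lesssim N^{-3/p_1-1/p_2-1/p_3}.
\]
The lower bound argument is identical to the trilinear case, since the extra factors are $\equiv1$. For $x\in E_{M,N}$ the integrand is nonnegative, equals $1/t\sim 4$ on an interval of $t$ of length $\sim N^{-4}$, and the integral has no singularity. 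Therefore
\[
\|H(f_1,\dots,f_n)\|_r\gtrsim N^{-4}|E_{M,N}|^{1/r}\gtrsim N^{-4}.
\]
Boundedness would force $N^{-4}\lesssim N^{-3/p_1-1/p_2-1/p_3}$ for all large $N$, i.e. $3/p_1+1/p_2+1/p_3\le 4$. This contradicts the hypothesis.

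\textbf{Main obstacle.} The point needing care is that the hypothesis involves only $p_1,p_2,p_3$, so the remaining functions must cost nothing. With fixed bumps they cost $O(1)$, which is allowed because $\eta$ is a fixed small constant independent of $N$. The only check is that $f_i(x-t^i)\equiv1$ uniformly on the region used, which holds since that region lies within $O(\eta)$ of $(0,a)$ and $\chi$ is $1$ on a fixed neighbourhood of $0$.
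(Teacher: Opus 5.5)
Your proposal is correct and follows the paper's proof. You keep $f_1,f_2,f_3$ and the grid from the trilinear construction, and take $f_i$ for $i\ge 4$ to be smooth cutoffs identically equal to $1$ on the $O(\eta)$-neighbourhood of $-a^i$ where $x-t^i$ lies, so that only the factor $N^{-3/p_1-1/p_2-1/p_3}$ survives in the upper bound. The one cosmetic difference is that the paper uses the dilated bump $\chi\bigl((y+a^i)/(A_i\eta)\bigr)$ with $A_i$ large, while you use a fixed bump $\chi(y+a^i)$ and shrink $\eta$; these are equivalent because $\eta$ is a fixed constant independent of $N$.
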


		\begin{remark}
			For $n\geq 4$, given \(1/n < r < 1/2\), define
			\[
			L_0 = \max\{2, r^{-1} - (n - 3)\}, \quad B_0 = \min\{3, r^{-1}\}, \quad V = \frac{L_0 + B_0}{2}.
			\]
			We have \(L_0 < B_0\).  Hence
			\[
			2 < V < 3, \quad 0 <r^{-1} - V < n - 3.
			\]
			Let
			\[
			\e = \min\left\{\frac{V - 2}{4}, \frac{3 - V}{2}\right\} > 0.
			\]
			It is easy to check that the exponents
			\[
			p_1^{-1} = 1 - \e, \quad p_2^{-1} = p_3^{-1} = \frac{V - 1 + \e}{2}, \quad p_4^{-1} = \cdots = p_n^{-1} = \frac{r^{-1} - V}{n - 3}
			\]
			satisfy the conditions of Proposition \ref{MHT}. This shows that for any \(1/n < r < 1/2\), there exist exponents $p_{1},\dots,p_{n}$ satisfying the H\"older relation such that the desired $(p_1, \dots, p_n, r)$-boundedness fails.
		\end{remark}

		\begin{proof}[Proof of Proposition \ref{MHT}]
			We follow the construction from the trilinear case above, keeping the definitions of \(f_1, f_2, f_3\) unchanged, and additionally define \(f_i\) for \(i \geq 4\) as follows:
            \[f_{i}(x):=\chi\left(\frac{x+a^{i}}{A_{i}\eta}\right).
			\]
			In the lower-bound argument, we in fact have $|x|\lesssim \eta$ and $|t-a|\lesssim \eta$. Hence, for all $i\geq 4$, if $A_{i}$ is sufficiently large, then $f_{i}(x-t^{i})\equiv 1$. The remaining estimates are unchanged. 
		\end{proof}

The construction above also applies to the curves
$(t^{m_1},\ldots,t^{m_n})$, where $n\geq 3$ and the exponents
$m_1,\ldots,m_n$ are distinct positive integers, at least
one of which is odd, with the same exponent conditions as in
Proposition \ref{MHT}.

To see this, write $P_j(t)=t^{m_j}$ and $d_j(t)=P_j'(t)$.
Replace the fixed base point $a=1/4$ by a point $a>0$ at which
$d_1(a),d_2(a),d_3(a)$ are distinct and $R'(a)\neq 0$,
where
\[
R(t)=\frac{d_1(t)(d_2(t)-d_3(t))}
           {d_2(t)(d_1(t)-d_3(t))}.
\]
Define $\Theta$ near $a$ by
\[
\Theta(a)=0,
\qquad
\Theta'(t)=
\frac{(d_1(t)-d_2(t))(d_1(t)-d_3(t))}
     {d_2(t)-d_3(t)},
\]
and define the local phase functions by
\[
\Phi_1(z)=z,
\qquad
\Phi_j\bigl(P_1(t)-P_j(t)-P_1(a)+P_j(a)\bigr)
   =(-1)^{j+1}\Theta(t),
\quad j=2,3,
\]
whose
sum still vanishes to second order along the corresponding
curve. The determinant in the covering argument is still nonzero.  Counting the relevant lattice points on all four
sides of the corresponding square $\max\{|i-i'|,|j-j'|\}=K$ yields the same covering estimate. Thus the grid scales and the resulting norm estimates remain
unchanged.
For $n>3$, the additional input functions are fixed smooth cutoffs equal
to one near $-P_j(a)$.
Finally, the input function corresponding to an odd exponent localizes the full integrand near the positive base
point, preventing cancellation from negative $t$. Hence the original
failure condition $3/p_1+1/p_2+1/p_3>4$ remains valid.


\subsection*{Acknowledgments}
 J. Guo was partially supported by NSFC Grants 12571110 and 12341102. X. Li was partially supported by the Simons Foundation Grant 00958361 and by NSF Grant 2350101. J. Guo would like to thank Prof. Renhui Wan for stimulating discussions during the conference at Chongli, China.  G. Zhan thanks Chiyu Zhou for helpful communications.


\end{document}